\newtheorem{assumption}[lemma]{Assumption}
\newtheorem{example}[lemma]{Example}
\numberwithin{equation}{section}
\def\w{w}
\def\loc{\mathrm{loc}}
\def\FF{\mathcal{F}}
\def\${|\!|\!|}
\def\/{\,\rule[-0.25em]{2pt}{1em}\,}
\def\Cov{\mathop{\mathbf{Cov}}}
\definecolor{darkred}{rgb}{0.9,0.1,0.1}
\title{Random homogenisation of a highly oscillatory singular potential}
\author{Martin Hairer, Etienne Pardoux, Andrey Piatnitski}
\date{}
\begin{document}
\maketitle

\begin{abstract}
In this article, we consider the problem of homogenising the linear heat equation
perturbed by a rapidly oscillating random potential. We consider the situation where
the space-time scaling of the potential's oscillations is \textit{not} given by
the diffusion scaling that leaves the heat equation invariant. Instead, we treat
the case where spatial oscillations are much faster than temporal oscillations.
Under suitable scaling of the amplitude of the potential, we prove convergence
to a deterministic heat equation with constant potential, thus completing
the results previously obtained in \cite{MR2962093}.
\end{abstract}

\section{Introduction}

We consider the parabolic PDE with space-time random potential given by
\begin{equs}\label{e:orig}
\partial_t u^\eps(x,t)&=\partial^2_x u^\eps(x,t)+\eps^{-\beta}
V\left(\frac{x}{\eps},\frac{t}{\eps^\alpha}\right)u^\eps(x,t)\;,\\
u^\eps(x,0)&=u_0(x)\;,
\end{equs}
where $x \in \R$, $t \ge 0$ and $V$ is a stationary centred random field.
The homogenisation theory of equations of this type has been studied by a number
of authors. The case when $V$ is time-independent was considered in \cite{MR2451056,MR2718269}. The articles \cite{MR1819484,MR2190162} considered a situation
where $V$ is a stationary process as a function of time, but periodic in space.
Purely periodic / quasiperiodic operators with large potential were also studied in
\cite{BLP,MR737902}.

For $\alpha \ge 2$ and $\beta = {\alpha \over 2}$, \eref{e:orig} was studied in \cite{MR2962093}, where it was shown that its solutions
converge as $\eps \to 0$ to the solutions to
\begin{equ}[e:limitu]
\partial_t u(x,t) = \partial^2_x u(x,t) + \bar V u(x,t)\;,\quad
u(x,0)=u_0(x)\;,
\end{equ}
where the constant $\bar V$ is given by
\begin{equ}[e:Vbarfast]
\bar V = \int_0^\infty \Phi(0,t) \,dt\;,
\end{equ}
in the case $\alpha > 2$ and
\begin{equ}[e:Vbardiff]
\bar V = \int_0^\infty \int_{-\infty}^\infty {e^{-{x^2 \over 4t}}\over 2\sqrt{\pi t}} \Phi(x,t) \,dx \,dt\;,
\end{equ}
in the case $\alpha = 2$. Here, $\Phi(x,t) = \E V(0,0)V(x,t)$ is the correlation function of $V$ which is assumed to
decay sufficiently fast.

In the case $0<\alpha<2$, it was conjectured in \cite{MR2962093} that the correct scaling to use in order to obtain
a non-trivial limit is $\beta = 1/2+\alpha/4$, but the corresponding value of $\bar V$ was not obtained.
Furthermore, the techniques used there seem to break down in this case.
The main result of the present article is that the conjecture does indeed hold true 
and that the solutions to \eref{e:orig}
do again converge to those of  \eref{e:limitu} as $\eps \to 0$. This time, the limiting constant $\bar V$ is given by
\begin{equ}[e:defVbar]
\bar V = \frac{1}{2\sqrt{\pi}}
\int_0^\infty \frac{\overline\Phi(t)}{\sqrt t}\,dt\;,
\end{equ}
where we have set $\overline\Phi(s):=\int_\R \Phi(x,s)dx$.

\begin{remark}
One can ``guess'' both \eref{e:Vbarfast} and \eref{e:defVbar} if we admit that \eref{e:Vbardiff} holds. Indeed, \eref{e:Vbarfast} is obtained from \eref{e:Vbardiff}
by replacing  $\Phi(x,t)$ by $\Phi(\delta x, t)$ and taking the limit $\delta \to 0$.
This reflects the fact that this corresponds to a situation in which, at the diffusive
scale, the temporal oscillations of the potential are faster than the
spatial oscillations. Similarly, \eref{e:defVbar} is obtained by replacing
$\Phi(x,t)$ with $\delta^{-1}\Phi(\delta^{-1} x, t)$ and then taking the limit
$\delta \to 0$, reflecting the fact that we are in the reverse situation where
spatial oscillations are faster. These arguments also allow to guess the
correct exponent $\beta$ in both regimes.
\end{remark}

The techniques employed in the present article are very different from \cite{MR2962093}: 
instead of relying on probabilistic techniques, we
adapt the analytical techniques from \cite{KPZ}.

From now on, we will rewrite \eref{e:orig} as
\begin{equ}
\partial_t u^\eps(x,t)=\partial^2_x u^\eps(x,t) + V_\eps(x,t)u^\eps(x,t)\;,\quad u^\eps(x,0) = u_0(x)\;,
\end{equ}
where $V_\eps$ is the rescaled potential given by
\begin{equ}
V_\eps(x,t) = \eps^{-(1/2+\alpha/4)}
V\left(\frac{x}{\eps},\frac{t}{\eps^\alpha}\right)\;.
\end{equ}
Before we proceed, we give a more precise description of our 
assumptions on the random potential $V$.

\subsection{Assumptions on the potential}

Besides some regularity and integrability assumptions, our main assumption will
be a sufficiently fast decay of maximal correlations for $V$.
Recall that the ``maximal correlation coefficient'' of $V$, subsequently denoted by $\rho$, is given
by the following definition where, for any given compact set $K\subset \R^2$, we denote by $\CF_K$
the $\sigma$-algebra generated by $\{V(x,t)\,:\, (x,t) \in K\}$.

\begin{definition}\label{def:cor}
For any $r > 0$, $\rho(r)$ is the smallest value such that the bound
\begin{equ}
\E \bigl(\phi_1(V)\phi_2(V)\bigr) \le \rho(r) \sqrt{\E \phi_1^2(V) \, \E \phi_2^2(V)}\;,
\end{equ}
holds for any two compact sets $K_1$, $K_2$ such that
\begin{equ}
d(K_1,K_2) \eqdef \inf_{(x_1,t_1) \in K_1}\inf_{(x_2,t_2) \in K_2} (|x_1 - x_2| + |t_1-t_2|) \ge r\;,
\end{equ}
and any two r.v.'s $\phi_i(V)$ such that $\phi_i(V)$ is $\FF_{K_i}$-measurable and $\E \phi_i(V) = 0$.
\end{definition}
Note that $\rho$ is a decreasing function.
With this notation at hand, we then make the following assumption:

\begin{assumption}\label{ass:Phi}
The field $V$ is stationary, centred, continuous, and $\CC^1$ in the $x$-variable.
Furthermore,
\begin{equ}
\E \bigl(|V(x,t)|^p + |\d_x V(x,t)|^p\bigr) < \infty
\end{equ}
for every $p > 0$.
\end{assumption}

For most of our results, we will furthermore require that the correlations of $V$ decay sufficiently fast in the following sense:

\begin{assumption}\label{ass:cor}
The maximal correlation function $\rho$ from Definition~\ref{def:cor} satisfies
$\rho(R) \lesssim (1+R)^{-q}$ for every $q > 0$.
\end{assumption}

\begin{remark}
Retracing the steps of our proof, one can see that in order to obtain our main result, Theorem~\ref{th:main},
we actually only need this bound for some sufficiently large $q$.
Similarly, the assumption on the $x$-differentiability of $V$ is not absolutely necessary, but simplifies
some of our arguments.
\end{remark}

Let us first give a few examples of random fields satisfying our assumptions.

\begin{example}\label{ex:Poisson}
Take a measure space $(\CM,\nu)$ with some finite measure $\nu$
and a function $\psi\colon \CM\times \R^2 \to \R$ such that
\begin{equ}
\sup_{m \in \CM} \sup_{x,t} {|\psi(m,x,t)| + |\d_x \psi(m,x,t)| \over  1 + |x|^q + |t|^q}< \infty\;,
\end{equ}
for all $q > 0$. Assume furthermore that $\psi$ satisfies the centering condition
\begin{equ}
 \int_\R \int_\R \int_{\CM} \psi(m,y,s)\,\nu(dm)\,dy\,ds = 0\;.
\end{equ} 
Consider now a realisation $\mu$ of the Poisson point process on $\CM\times \R^2$
with intensity measure $\nu(dm)\,dy\,ds$ and set
\begin{equ}
V(x,t) = \int_{\CM} \int_\R \int_\R \psi(m,y-x,s-t)\,\mu(dm,dy,ds)\;.
\end{equ}
Then $V$ satisfies Assumptions~\ref{ass:Phi} and \ref{ass:cor}.
\end{example}

\begin{example}\label{ex:Gaussian}
Take for $V$ a centred Gaussian field with covariance $\Phi$ such that
\begin{equ}
 \sup_{x,t} {|\Phi(x,t)| + |\d_x^2 \Phi(x,t)| \over  1 + |x|^q + |t|^q}< \infty\;,
\end{equ}
for all $q > 0$.
Then $V$ does not quite satisfy Assumptions~\ref{ass:Phi} and \ref{ass:cor} because $V$ and $\d_x V$ are
not necessarily continuous. However, it is easy to check that our proofs still work in this case.
\end{example}

The advantage of Definition~\ref{def:cor} is that it is invariant under the composition by
measurable functions.
In particular, given  a finite number of independent random fields $\{V_1,\ldots ,V_k\}$ of the type
of Examples~\ref{ex:Poisson} and \ref{ex:Gaussian} (or, more generally, any mutually independent fields  satisfying
Assumptions~\ref{ass:Phi} and \ref{ass:cor}) and a  function $F\colon \R^k \to \R$
such that
\begin{enumerate}
\item  $\E F(V_1(x,t),\ldots,V_k(x,t)) = 0$,
\item $F$, together with its first partial derivatives, grows no faster than polynomially at infinity.
\end{enumerate}
Then, our results hold with $V(x,t) = F(V_1(x,t),\ldots,V_k(x,t))$.

\subsection{Statement of the result}

Consider the solution to the heat equation with constant potential
\begin{equs}\label{e:limit}
\partial_t u(x,t)&=\partial^2_x u(x,t)+
\bar V u(x,t),\qquad t\ge0,x\in\R;\\
u(x,0)&=u_0(x),
\end{equs}
where $\bar V$ is defined by \eref{e:defVbar}. Then, the main result of this article
is the following convergence result:

\begin{theorem}\label{th:main}
Let $V$ be a random potential satisfying Assumptions~\ref{ass:Phi} and \ref{ass:cor}, and
let $u_0\in\CC^{3/2}(\R)$ be of no more than exponential growth.
Then, as $\eps\to0$, one has $u^\eps(t,x)\to u(t,x)$ in probability,
locally uniformly in $x \in\R$ and $t \ge 0$.
\end{theorem}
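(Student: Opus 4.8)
The plan is to study the equation via its Duhamel (mild) formulation and to analyse the resulting random series. Writing $P_t$ for the heat semigroup $e^{t\partial_x^2}$, the solution $u^\eps$ satisfies
\begin{equ}
u^\eps(\cdot,t) = P_t u_0 + \int_0^t P_{t-s}\bigl(V_\eps(\cdot,s) u^\eps(\cdot,s)\bigr)\,ds\;,
\end{equ}
and iterating this identity yields a Neumann series whose $n$-th term is an $n$-fold space-time integral of a product of heat kernels against $\prod_{k} V_\eps(x_k,s_k)$. The strategy, following the analytic approach of \cite{KPZ}, is to show that (i) this series converges (in a suitable probabilistic and functional-analytic sense) uniformly in $\eps$, so that it suffices to control each term; (ii) the expectation of the $n$-th term converges, as $\eps\to0$, to the $n$-th term of the Neumann series for the limit equation \eref{e:limit}, namely an $n$-fold integral of heat kernels against $\bar V^n$; and (iii) the fluctuations of each term around its mean vanish in probability as $\eps\to0$. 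Summing over $n$ then gives $u^\eps \to u$ in probability, locally uniformly.

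\textbf{Convergence of the mean.} For a single term, one expands $\E$ of the $n$-fold product of $V_\eps$'s; by the decay of maximal correlations (Assumption~\ref{ass:cor}) the dominant contribution comes from pairings of the $2m$ points (odd terms are negligible), each contributing a factor $\E V_\eps(x_k,s_k)V_\eps(x_\ell,s_\ell) = \eps^{-(1+\alpha/2)}\Phi\bigl(\tfrac{x_k-x_\ell}{\eps},\tfrac{s_k-s_\ell}{\eps^\alpha}\bigr)$. The key scaling computation is that, when this two-point function is integrated against a smooth test function of $(x_k-x_\ell, s_k-s_\ell)$, it behaves like $c\,\delta\bigl((x_k-x_\ell)\bigr)\,|s_k-s_\ell|^{-1/2}$ times the appropriate constant: indeed, integrating in the fast spatial variable produces $\overline\Phi$, while the residual $\eps^{-\alpha/2}|\cdot|^{-1}$ in the time variable, convolved with the heat kernel whose spatial part has collapsed to a point, reproduces exactly the half-order fractional kernel and the constant $\frac{1}{2\sqrt\pi}\int_0^\infty \overline\Phi(t)t^{-1/2}\,dt = \bar V$. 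Combinatorially, the pairings that survive are precisely the ``non-crossing consecutive'' ones that rebuild the iterated convolution $P_{t-s_1}\bar V P_{s_1-s_2}\bar V\cdots$, so that the limit of the $n$-th term matches the $n$-th Neumann term of \eref{e:limit}; this is the step I expect to be the main obstacle, both because of the delicate singular-kernel scaling limit and because one must show all other pairings (and all odd-order correlations) are subleading using Assumption~\ref{ass:cor}.

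\textbf{Uniform bounds and vanishing fluctuations.} To make the series manipulation rigorous one needs, uniformly in $\eps$, a bound on the $n$-th term of the form $C^n/\Gamma(n/2+1)$ or similar, so that the series converges absolutely; this comes from the smoothing of $P_t$ (gain of $t^{1/2}$ per application in the right norm) combined with the effective $|s_k-s_\ell|^{-1/2}$ singularity, giving a one-half-order fractional integral at each step whose iterates have factorially decaying norms — here the exponential-growth allowance on $u_0$ forces the use of weighted (e.g. exponentially weighted) $L^\infty$ or Hölder spaces, and Assumption~\ref{ass:Phi} (all moments finite, $\CC^1$ in $x$) is used to get $L^p(\Omega)$ control of each term for every $p$. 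For the fluctuations, one estimates the variance (or a higher moment) of the $n$-th term: the difference between the full product-expectation and the square of the mean again reduces, via maximal correlations, to pairings that ``link'' the two copies, and a power-counting argument shows each such linked pairing carries an extra factor that is a positive power of $\eps$ (this is where $0<\alpha<2$ and $\beta=1/2+\alpha/4$ enter quantitatively, ensuring the cross terms are genuinely smaller than the diagonal ones). Finally, one upgrades convergence in probability of each finite partial sum, plus the uniform-in-$\eps$ tail bound on the series, to convergence in probability of $u^\eps$ itself, locally uniformly in $(x,t)$, by an $\eps/3$-type argument; the local uniformity in $x$ and $t$ follows from the same weighted-space estimates applied to spatial/temporal increments, using the Hölder regularity of $u_0$ and of the heat kernel.
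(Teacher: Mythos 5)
Your plan is a genuinely different route from the paper's (a Duhamel/Neumann series with diagrammatic identification of the limit, rather than the paper's corrector construction), but as written it has a substantive gap exactly at the step you flag, and at a second step you assert without support. All three of your stages hinge on replacing $\E\prod_{k=1}^{n}V_\eps(x_k,s_k)$ by a sum over pairings with errors that are controllable uniformly in the order $n$ and in $\eps$. The field $V$ here is \emph{not} Gaussian: Definition~\ref{def:cor} and Assumption~\ref{ass:cor} only give a correlation bound between \emph{two} well-separated clusters, and they do not yield any Wick formula or cumulant/cluster expansion for $n$-point functions. The paper needs exactly one statement of this kind beyond the two-point function, namely the four-point bound of Lemma~\ref{ass:cor4}, and even that requires a careful case analysis; your scheme needs analogous bounds for all $2m$-point functions, with combinatorial constants that stay summable after multiplying by the diverging amplitude $\eps^{-n(1/2+\alpha/4)}$, and moreover you need \emph{high} moments of each series term (convergence in probability locally uniformly requires $p$-th moments, i.e.\ correlation functions of order $pn$, as in the Kolmogorov-type arguments of Corollary~\ref{co:Yto0}). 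Saying that ``by the decay of maximal correlations the dominant contribution comes from pairings'' is precisely the missing argument, not a consequence of the assumptions. Relatedly, the uniform bound $C^n/\Gamma(n/2+1)$ cannot be obtained term by term from absolute values, since each insertion of $V_\eps$ costs $\eps^{-1/2-\alpha/4}$ pointwise; the gain must come from oscillation at every order simultaneously, which is where Gaussian tools (chaos decomposition, hypercontractivity) are normally invoked and are unavailable here.

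By contrast, the paper avoids all higher-order diagrammatics: it sets $v^\eps=u^\eps\exp(-(Y^\eps+Z^\eps))$ with the two correctors of \eref{e:defCorrectors}, so that $v^\eps$ solves the linear equation \eref{e:veps} whose coefficients involve only $\d_x Y^\eps$, $\d_x Z^\eps$ and $\bar V_\eps$. Then only second- and fourth-order correlation functions of $V$ are ever needed (Lemmas~\ref{lem:moment_int_c}, \ref{ass:cor4}, \ref{le:momentsY}, \ref{le:momentsZ}, with the constant $\bar V$ identified in Lemma~\ref{le:bar(c)} by essentially the two-point computation you sketch), and the passage to the limit is done deterministically through the stability result in weighted H\"older spaces, Theorem~\ref{theo:linearFG}, fed by the bounds of Corollaries~\ref{cor:minus} and \ref{co:Z}. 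If you wish to pursue your expansion, you would either have to restrict to Gaussian $V$ (where the pairing structure is exact) or first prove a quantitative $n$-point clustering estimate with explicit control of the combinatorics; without one of these, steps (i)--(iii) of your proposal do not go through.
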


\begin{remark}
The precise assumption on $u_0$ is that it belongs to the space $\CC^{3/2}_{e_\ell}$ for
some $\ell \in \R$, see Section~\ref{sec:spaces} below for the definition of this space.
\end{remark}

\begin{remark}
The fact that $\E V = 0$ is of course not essential, since one can easily subtract the
mean by performing a suitable rescaling of the solution.
\end{remark}

To prove Theorem~\ref{th:main}, we use the standard ``trick'' to introduce a corrector
that ``kills'' the large potential $V_\eps$ to highest order. The less usual feature of
this problem is that, in order to obtain the required convergence, it turns out to be advantageous
to use \textit{two} correctors, which ensures that the remaining terms can be brought under control. These correctors, which we denote by $Y^\eps$ and $Z^\eps$,
are given by the solutions to the following inhomogeneous heat equations:
\begin{equs}[e:defCorrectors]
\partial_t Y^\eps(x,t)&=\partial^2_x Y^\eps(x,t)+V_\eps(x,t)\;,\\
\partial_t Z^\eps(x,t)&=\partial^2_x Z^\eps(x,t)+
\left|\partial_x Y^\eps(x,t)\right|^2-\bar V_\eps(t),
\end{equs}
where we have set $\bar V_\eps(t) = \E \left|\partial_x Y^\eps(x,t)\right|^2$.
In both cases, we start with the flat (zero) initial condition at $t=0$.
Writing
$$
v^\eps(x,t)=u^\eps(x,t)\exp\left[-\left(Y^\eps(x,t)+Z^\eps(x,t)\right)\right],
$$
Theorem~\ref{th:main} is then a consequence of the following two claims:
\begin{enumerate}
\item Both $Y^\eps$ and $Z^\eps$ converge locally
uniformly to $0$.
\item The process $v^\eps$ converges locally uniformly to the solution $u$
of \eref{e:limit}.
\end{enumerate}
It is straightforward to verify that $v^\eps$ solves the equation
\begin{equ}[e:veps]
\partial_t v^\eps=\partial^2_x v^\eps+\bar V_\eps\, v^\eps
+ 2
\left(\partial_x Y^\eps+\partial_x Z^\eps\right)\partial_x v^\eps+
\left(\left|\partial_x Z^\eps\right|^2
+2 \partial_x Z^\eps\partial_x Y^\eps \right)v^\eps\;,
\end{equ}
with initial condition $u_0$. The second claim will then  essentially follow from
the first (except that, due to the appearance of nonlinear terms involving the derivatives 
of the correctors, we need somewhat tighter control than just locally
uniform convergence), combined with the fact that
the function $\bar V_\eps(t)$ converges locally uniformly to the constant $\bar V$.

\begin{remark}
One way of ``guessing'' the correct forms for the correctors $Y^\eps$ and $Z^\eps$
is to note the analogy of the problem with that of building solutions to the KPZ
equation. Indeed, performing the Cole-Hopf transform
$h^\eps = \log u^\eps$, one obtains for
$h^\eps$ the equation
\begin{equ}
\d_t h^\eps = \d_x^2 h^\eps + \bigl(\d_x h^\eps\bigr)^2 + V_\eps\;,
\end{equ}
which, in the case where $V_\eps$ is replaced by space-time white noise, 
was recently analysed in detail in \cite{KPZ}.
The correctors $Y^\eps$ and $Z^\eps$ then arise naturally in this analysis
as the first terms in the Wild expansion of the KPZ equation.

This also suggests that it would be possible to find a diverging sequence
of constants $C_\eps$ such that the solutions to
\begin{equ}
\partial_t u^\eps(x,t)=\partial^2_x u^\eps(x,t)+\eps^{-{1+\alpha\over 2}}
V\left(\frac{x}{\eps},\frac{t}{\eps^\alpha}\right)u^\eps(x,t) - C_\eps u^\eps(x,t)\;,
\end{equ}
converge in law to the solutions to the multiplicative stochastic heat equation
driven by space-time white noise. In the non-Gaussian case, this does still seem out of
reach at the moment, although some recent progress can be found in \cite{HLong}.
\end{remark}

The proof of Theorem~\ref{th:main} now goes as follows. In a first step, which is
rather long and technical and constitutes Section~\ref{sec:bounds}
below, we obtain sharp a
priori bounds for $Y^\eps$ and $Z^\eps$ in various norms.
In a second step, which is performed in Section~\ref{sec:final}, we then combine these
estimates in order to show that the only terms in \eref{e:veps} that matter
are indeed the first two terms on the right hand side.

\begin{remark}
Throughout this article, the notation $X \lesssim Y$ will be equivalent
to the notation $X \le C Y$ for some constant $C$ independent of $\eps$.
\end{remark}

\section{Estimates of $Y^\eps$ and $Z^\eps$}
\label{sec:bounds}

In this section, we shall prove that both $Y^\eps$ and $Z^\eps$ 
tend to zero as $\eps\to0$, and establish further estimates
on those sequences of functions which will be needed for taking the limit of the sequence $v^\eps$. But before doing so, let us first introduce some technical tools which will be needed both in this section and in the last one.

\subsection{Weighted H\"older continuous spaces of functions and the heat semigroup}
\label{sec:spaces}

First of all, we define the notion of an admissible weight $\w$ as a function $\w \colon \R \to \R_+$ such that
there exists a constant $C\ge 1$ with
\begin{equ}[e:admissible]
C^{-1} \le {\w(x) \over \w(y)} \le C\;,
\end{equ}
for all pairs $(x,y)$ with $|x-y| \le 1$. Given such an admissible weight $\w$, we then define the
space $\CC_\w$ as the closure of $\CC_0^\infty$ under the norm
\begin{equ}
\|f\|_\w = \|f\|_{0,\w} = \sup_{x \in \R} { |f(x)| \over \w(x)}\;.
\end{equ}
We also define $\CC^\alpha_\w$ for $\alpha \in (0,1)$ as the closure of $\CC_0^\infty$ under the norm
\begin{equ}
\|f\|_{\alpha,\w} = \|f\|_\w + \sup_{|x-y| \le 1} {|f(x)-f(y)| \over \w(x) |x-y|^\alpha}\;.
\end{equ}
Similarly, for $\alpha \ge 1$, we define $\CC^\alpha_\w$ recursively as the closure of $\CC_0^\infty$ under the norm
\begin{equ}
\|f\|_{\alpha,\w} = \|f\|_\w + \|f'\|_{\alpha-1,\w}\;.
\end{equ}
It is clear that, if $\w_1$ and $\w_2$ are two admissible weights, then so is $\w = \w_1\,\w_2$. Furthermore,
it is a straightforward exercise to use the Leibniz rule to verify that there exists a constant $C$ such that the bound
\begin{equ}[e:boundMult]
\|f_1 f_2\|_{\alpha,\w} \le C \|f_1\|_{\alpha_1,\w_1} \|f_2\|_{\alpha_2,\w_2}\;,
\end{equ}
holds for every $f_i \in \CC_{\w_i}^{\alpha_i}$, provided that $\alpha \le \alpha_1 \wedge \alpha_2$.

We now show that a similar inequality still holds if one of the two H\"older exponents is \textit{negative}.
For $\alpha \in (-1,0)$, we can indeed define weighted spaces of negative ``H\"older regularity'' by
postulating that $\CC^{\alpha}_\w$ is the closure of $\CC_0^\infty$ under the norm
\begin{equ}
\|f\|_{\alpha,\w} = \sup_{|x-y| \le 1}  {|\int_x^y f(z)\,dz| \over \w(x) |x-y|^{\alpha+1}}\;.
\end{equ}
In other words, we essentially want the antiderivative of $f$ to belong to $\CC^{\alpha+1}_\w$, except
that we do not worry about its growth.

With these notations at hand, we then have the bound:

\begin{proposition}\label{prop:multHol}
Let $\w_1$ and $\w_2$ be two admissible weights and let $\alpha_1 < 0 < \alpha_2$ be such that
$\alpha_2 > |\alpha_1|$. Then, the bound \eref{e:boundMult} holds with $\alpha = \alpha_1$.
\end{proposition}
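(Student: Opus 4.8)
The plan is to establish the bound first for $f_1,f_2\in\CC_0^\infty$, with a constant depending only on $\alpha_1$, $\alpha_2$ and the admissibility constants of $\w_1$, $\w_2$, and then to pass to the closure. Since $\CC_0^\infty$ is dense in both $\CC^{\alpha_1}_{\w_1}$ and $\CC^{\alpha_2}_{\w_2}$, and since $f_1 f_2\in\CC_0^\infty\subset\CC^{\alpha_1}_{\w}$ (with $\w=\w_1\w_2$) whenever $f_1,f_2\in\CC_0^\infty$, such a bound shows that $(f_1,f_2)\mapsto f_1 f_2$ extends uniquely to a bounded bilinear map $\CC^{\alpha_1}_{\w_1}\times\CC^{\alpha_2}_{\w_2}\to\CC^{\alpha_1}_{\w}$. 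This extension \emph{defines} the product in this regularity range; it agrees with the pointwise product whenever the latter makes sense, it is independent of the approximating sequences by the usual interleaving argument, and it satisfies \eref{e:boundMult} by construction. We may moreover assume $\alpha_2<1$: if $\alpha_2\ge1$ then, since only increments over intervals of length at most $1$ ever enter the norms, $\|f_2\|_{\alpha_2',\w_2}\lesssim\|f_2\|_{\alpha_2,\w_2}$ for every $\alpha_2'\in(|\alpha_1|,1)$, so it is enough to treat such an $\alpha_2'$.

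So fix $f_1,f_2\in\CC_0^\infty$ and a pair $x<y$ with $h:=y-x\le1$, and set $F_1(z)=\int_x^z f_1(w)\,dw$. The definition of $\|\cdot\|_{\alpha_1,\w_1}$, together with admissibility of $\w_1$, gives $|F_1(b)-F_1(a)|\lesssim\|f_1\|_{\alpha_1,\w_1}\,\w_1(x)\,|b-a|^{1+\alpha_1}$ for all $a,b\in[x,y]$, and similarly $\sup_{[x,y]}|f_2|\lesssim\|f_2\|_{\alpha_2,\w_2}\,\w_2(x)$ and $|f_2(b)-f_2(a)|\lesssim\|f_2\|_{\alpha_2,\w_2}\,\w_2(x)\,|b-a|^{\alpha_2}$ on $[x,y]$. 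I then split $\int_x^y f_1 f_2\,dz = f_2(x)\,F_1(y)+I_{x,y}$, where $I_{x,y}:=\int_x^y f_1(z)\bigl(f_2(z)-f_2(x)\bigr)\,dz$. The first term is at once of size $\lesssim\|f_1\|_{\alpha_1,\w_1}\|f_2\|_{\alpha_2,\w_2}\,\w(x)\,h^{1+\alpha_1}$, so everything reduces to controlling $I_{x,y}$.

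For $I_{x,y}$ I use a telescoping (sewing) argument. Writing $I_{a,b}:=\int_a^b f_1(z)(f_2(z)-f_2(a))\,dz$, one has the exact identity $I_{a,b}=I_{a,c}+I_{c,b}+(f_2(c)-f_2(a))\int_c^b f_1(z)\,dz$ for $a<c<b$, whose last term is $\lesssim\|f_1\|_{\alpha_1,\w_1}\|f_2\|_{\alpha_2,\w_2}\,\w(x)\,|c-a|^{\alpha_2}|b-c|^{1+\alpha_1}$. Iterating this identity along the dyadic partitions of $[x,y]$ --- passing from the level-$n$ partition to the level-$(n-1)$ one by deleting every other interior point, each deletion affecting a disjoint pair of intervals --- writes $I_{x,y}$ as $\int_x^y f_1(z)\bigl(f_2(z)-f_2(\pi_n(z))\bigr)\,dz$, namely the sum of the $I_{s,s'}$ over the level-$n$ subintervals $[s,s']$, with $\pi_n(z)$ the left endpoint of the one containing $z$, plus a sum over $k=1,\dots,n$ of defect terms, the $k$-th of which is $\lesssim\|f_1\|_{\alpha_1,\w_1}\|f_2\|_{\alpha_2,\w_2}\,\w(x)\,h^{1+\alpha_1+\alpha_2}2^{-k(\alpha_1+\alpha_2)}$. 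For the fixed smooth $f_1,f_2$ the displayed integral tends to $0$ as $n\to\infty$ by uniform continuity of $f_2$, so letting $n\to\infty$ gives $|I_{x,y}|\lesssim\|f_1\|_{\alpha_1,\w_1}\|f_2\|_{\alpha_2,\w_2}\,\w(x)\,h^{1+\alpha_1+\alpha_2}\sum_{k\ge1}2^{-k(\alpha_1+\alpha_2)}$. The series converges because $\alpha_1+\alpha_2>0$, and since $h\le1$ and $\alpha_2>0$ we may bound $h^{1+\alpha_1+\alpha_2}$ by $h^{1+\alpha_1}$. Together with the bound on the first term this gives $\bigl|\int_x^y f_1 f_2\,dz\bigr|\lesssim\|f_1\|_{\alpha_1,\w_1}\|f_2\|_{\alpha_2,\w_2}\,\w(x)\,|x-y|^{1+\alpha_1}$; taking the supremum over all $x<y$ with $|x-y|\le1$ yields \eref{e:boundMult} with $\alpha=\alpha_1$.

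The main obstacle is this last estimate: one cannot bring absolute values inside $I_{x,y}$, because $f_1$ is controlled only through its primitive and $\int|f_1|$ carries no bound uniform in $f_1$. The telescoping argument is exactly what lets one trade this missing integrability of $f_1$ against the (more than) complementary Hölder regularity of $f_2$, and it is the hypothesis $\alpha_2>|\alpha_1|$, equivalently $\alpha_1+\alpha_2>0$, that makes the resulting geometric series summable. Everything else amounts to routine bookkeeping with the weights, which is harmless precisely because only increments over intervals of length at most $1$ ever come up.
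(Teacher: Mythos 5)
Your proof is correct and follows essentially the same route as the paper: after reducing to smooth compactly supported functions by density, you split $\int_x^y f_1 f_2\,dz$ into $f_2(x)F_1(y)$ plus a remainder controlled by the complementary H\"older regularities of $F_1$ and $f_2$, which is exactly the content of the Young inequality for the Riemann--Stieltjes integral $\int_x^y f_2\,dF_1$ that the paper invokes with a citation. The only difference is that you reprove Young's bound inline via the dyadic telescoping (sewing) argument, using $\alpha_1+\alpha_2>0$ for the geometric series, and you spell out the harmless reduction to $\alpha_2<1$ and the density/extension step, which makes the argument self-contained but not substantively different.
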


\begin{proof}
We only need to show the bound for smooth and compactly supported elements $f_1$ and $f_2$, the general case
then follows by density. Denote now by $F_1$ an antiderivative for $f_1$, so that
\begin{equ}
\int_x^y f_1(z)f_2(z)\,dz =\int_x^y f_2(z)\,dF_1(z)\;,
\end{equ}
where the right hand side is a Riemann-Stieltjes integral. For any interval $I\subset \R$, we now write
\begin{equ}
\/f\/_{\alpha,I} = \sup_{\{x,y\} \subset I} {|f(x) - f(y)| \over |x-y|^\alpha}\;.
\end{equ}
It then follows from Young's inequality \cite{Young} that there exists a constant $C$ depending only on the precise values of the $\alpha_i$
and on the constants appearing in the definition \eref{e:admissible} of admissibility for the weights $\w_i$, such that
\begin{equs}
\Bigl|\int_x^y f_2(z)\,dF_1(z)\Bigr| &\le |f_2(x)| \bigl|F_1(y)-F_1(x)\bigr| + C \/f_2\/_{\alpha_2,[x,y]} \/F_1\/_{\alpha_1+1,[x,y]} |x-y|^{\alpha_1 + \alpha_2 + 1} \\
&\le \w(x)|x-y|^{\alpha_1+1}\bigl( \|f_2\|_{0,\w_2} \|f_1\|_{\alpha_1,\w_1} + C \|f_2\|_{\alpha_2,\w_2} \|f_1\|_{\alpha_1,\w_1}\bigr)\;,
\end{equs}
which is precisely the requested bound.
\end{proof}

There are two types of admissible weights that will play a crucial role in the sequel:
\begin{equ}
e_\ell(x) \eqdef \exp(- \ell|x|)\;,\qquad p_\kappa(x) \eqdef 1 + |x|^\kappa\;,
\end{equ}
where the exponent $\kappa$ will always be positive, but $\ell$ could have any sign. One has of course the
identity
\begin{equ}[e:idexp]
e_\ell \cdot e_m = e_{\ell+m}\;.
\end{equ}
Furthermore, it is straightforward to verify that there exists a constant $C$ such that the bound
\begin{equ}[e:boundpe]
p_\kappa(x) e_\ell(x) \le C \ell^{-\kappa}\;,
\end{equ}
holds uniformly in $x \in \R$, $\kappa \in (0,1]$, and $\ell \in (0,1]$.

Finally, we have the following regularising property of the heat semigroup:

\begin{proposition}\label{prop:Heat}
Let $\alpha \in (-1,\infty)$, let $\beta > \alpha$, and let $\ell, \kappa \in \R$.
Then, for every $t>0$, the operator $P_t$ extends to a bounded
operator from $\CC^\alpha_{e_\ell}$ to $\CC^\beta_{e_\ell}$ and from
$\CC^\alpha_{p_\kappa}$ to $\CC^\beta_{p_\kappa}$.
Furthermore, for every $\ell_0 > 0$ and $\kappa_0 > 0$,
there exists a constant $C$ such that the bounds
\begin{equ}
\|P_t f\|_{\beta,e_\ell} \le C t^{-{\beta - \alpha\over 2}} \|f\|_{\alpha,e_\ell}\;,\qquad
\|P_t g\|_{\beta,p_\kappa} \le C t^{-{\beta - \alpha\over 2}} \|g\|_{\alpha,p_\kappa}\;,
\end{equ}
hold for every $f \in \CC_{e_\ell}^\alpha$, every $g \in \CC_{p_\kappa}^\alpha$, every $t \in (0,1]$, every
$|\ell| \le \ell_0$, and every $|\kappa| \le \kappa_0$.
\end{proposition}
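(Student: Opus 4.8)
The plan is to prove Proposition~\ref{prop:Heat} as a weighted version of the classical parabolic smoothing estimate, in which the admissible weights $e_\ell$ and $p_\kappa$ enter only in a soft way: the Gaussian kernel $p_t(x) = (4\pi t)^{-1/2}\exp(-x^2/4t)$ and each of its spatial derivatives stay controlled after being integrated against these weights, uniformly over $t \in (0,1]$ and the weight parameters in the stated ranges. By density it suffices to establish both families of bounds for $f \in \CC_0^\infty$. I would proceed in three steps: (i) record the few kernel estimates needed; (ii) prove the bound for $\alpha \ge 0$ by the usual argument exploiting that $\int \d_x^j p_t = 0$ for $j \ge 1$; (iii) deduce the case $\alpha \in (-1,0)$ by integrating by parts against an antiderivative of $f$.

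\emph{Kernel estimates.} Since $\d_x^j p_t(z) = t^{-j/2}\,H_j(z/\sqrt t)\,p_t(z)$ with $H_j$ a polynomial of degree $j$, and $t \le 1$, the polynomial is absorbed into a slightly wider Gaussian: $|\d_x^j p_t(z)| \le C_j\,t^{-j/2}\,g_t(z)$ with $g_t(z) \eqdef C t^{-1/2}\exp(-z^2/8t)$. Elementary computations then give, uniformly for $t \in (0,1]$, the moment bounds $\int_\R |\d_x^j p_t(z)|\,|z|^\theta\,dz \le C\,t^{(\theta-j)/2}$ for $\theta \ge 0$, together with the weight bounds $\int_\R |\d_x^j p_t(x-y)|\,\w(y)\,dy \le C\,t^{-j/2}\,\w(x)$ for $\w \in \{e_\ell,p_\kappa\}$; the latter follow from $e_\ell(y) \le e_\ell(x)\,e^{|\ell|\,|x-y|}$ (completing the square) and from $p_\kappa(y) \le C\bigl(p_\kappa(x)+|x-y|^\kappa\bigr)$ combined with the moment bound, the constants depending only on $j,\theta,\ell_0,\kappa_0$. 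These are the only properties of the weights used below; note also that the super-polynomial off-diagonal decay of $\d_x^j p_t$ makes the portion of any convolution integral coming from $|x-z|\ge 1$ negligible.

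\emph{The case $\alpha \ge 0$.} Since $\|g\|_{\beta,\w} = \|g\|_{0,\w} + \|g'\|_{\beta-1,\w}$ for $\beta\ge1$ and $\d_x(P_t f) = p_t \ast \d_x f$ when $f$ is differentiable, one may assume $\alpha \in [0,1)$; then, by induction on $\lceil\beta\rceil$ using $\d_x(P_t f) = (\d_x p_t)\ast f$, the whole estimate reduces to the following: for $j \ge 0$, $\gamma \in [0,1)$ and $\alpha \in [0,1)$,
\[
\|\d_x^j p_t \ast f\|_{\gamma,\w} \lesssim \bigl(1 + t^{-(\gamma-\alpha)/2 - j/2}\bigr)\,\|f\|_{\alpha,\w}\;,\qquad t \in (0,1]\;,
\]
for $f\in\CC_0^\infty$ (in the assembling of the pieces the relevant terms always carry $j+\gamma \le \beta$, so the net blow-up is the required $t^{-(\beta-\alpha)/2}$, and wherever a nominal exponent is nonnegative a bounded contribution suffices since $t^{-(\beta-\alpha)/2}\ge1$). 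The $\CC^0_\w$ part is immediate from the $j$-th weight bound, using $\int\d_x^jp_t=0$ when $j\ge1$ to replace $f(z)$ by $f(z)-f(x)$ and gain $t^{\alpha/2}$. For the H\"older seminorm, fix $|x-y|\le1$ and use $\int\bigl(\d_x^j p_t(x-z)-\d_x^j p_t(y-z)\bigr)\,dz = 0$ to write
\[
\bigl(\d_x^j p_t \ast f\bigr)(x) - \bigl(\d_x^j p_t \ast f\bigr)(y) = \int_\R \bigl(\d_x^j p_t(x-z)-\d_x^j p_t(y-z)\bigr)\bigl(f(z)-f(x)\bigr)\,dz\;;
\]
then bound $|f(z)-f(x)| \le \|f\|_{\alpha,\w}\,\w(x)\,|x-z|^\alpha$ near the diagonal (and crudely far from it), and estimate the kernel difference by $|x-y|\int_0^1 |\d_x^{j+1}p_t(x+s(y-x)-z)|\,ds$ when $|x-y|\le\sqrt t$ and by $|\d_x^j p_t(x-z)|+|\d_x^j p_t(y-z)|$ when $|x-y|>\sqrt t$. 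In either regime the moment bounds produce exactly the advertised power of $t$.

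\emph{The case $\alpha \in (-1,0)$ --- the main obstacle.} For $f\in\CC_0^\infty$ set $F(x)\eqdef\int_0^x f(z)\,dz$. By the definition of $\|\cdot\|_{\alpha,\w}$ one has $|F(x)-F(y)| \le \|f\|_{\alpha,\w}\,\w(x)\,|x-y|^{\alpha+1}$ for $|x-y|\le1$; summing such increments along unit steps gives the crude growth bound $|F(x)| \lesssim \|f\|_{\alpha,\w}\,(1+|x|)\,\sup_{|z|\le|x|}\w(z)$, polynomial for $\w=p_\kappa$ and at most exponential for $\w=e_\ell$. Since $\d_x^{j+1}p_t$ and all its derivatives decay faster than any such growth, integration by parts is legitimate and yields $\d_x^j(P_t f) = \d_x^{j+1}p_t \ast F$ for every $j\ge0$; in particular $P_t f = \d_x p_t\ast F$. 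One then repeats the previous step with $F$ (of positive regularity $\alpha+1$, but without decay) in place of $f$ and $\d_x^{j+1}p_t$ in place of $\d_x^j p_t$: increments of $\d_x^j(P_tf)$ are written using $\int\d_x^{j+1}p_t=0$, and $|F(z)-F(x)|$ is controlled by $\|f\|_{\alpha,\w}\,\w(x)\,|x-z|^{\alpha+1}$ near the diagonal and by the growth bound far from it; the extra kernel-derivative costs $t^{-1/2}$, paid for by the extra $t^{(\alpha+1)/2}$ from the improved exponent of $F$, so that $\|P_t f\|_{\beta,\w} \lesssim t^{-(\beta-\alpha)/2}\|f\|_{\alpha,\w}$. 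The delicate points --- essentially the only genuine ones in the proof --- are exactly those of this last step: one must legitimise the antiderivative manipulation, which forces a bound on the \emph{growth} of $F$ in terms of $\|f\|_{\alpha,\w}$ rather than merely in terms of the $\CC_0^\infty$-norm of $f$, and one must check that in the far regime the higher-order derivatives of the Gaussian genuinely dominate the polynomial or exponential growth of both $F$ and $\w$, uniformly in $t\in(0,1]$, $|\ell|\le\ell_0$ and $\kappa\in(0,\kappa_0]$. Beyond that, everything is the classical parabolic smoothing estimate with a harmless weight carried along.
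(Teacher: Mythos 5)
Your argument is correct, and it takes a genuinely different (more explicit) route than the paper. The paper disposes of Proposition~\ref{prop:Heat} in three lines: first check that $P_t$ preserves the weighted norms (the case $\beta=\alpha$), then obtain the gain for \emph{integer} $\beta-\alpha$ by an explicit computation with derivatives of the kernel, and finally recover all remaining exponents by interpolation. You instead prove the whole scale of estimates by hand from Gaussian kernel and moment bounds: you reduce everything to a single first-order building block $\|\d_x^j p_t * f\|_{\gamma,\w}\lesssim t^{-(\gamma+j-\alpha)/2}\|f\|_{\alpha,\w}$ via the recursive definition of $\|\cdot\|_{\beta,\w}$, exploit the cancellation $\int \d_x^j p_t=0$ together with the two regimes $|x-y|\le\sqrt t$ and $|x-y|>\sqrt t$, and treat $\alpha\in(-1,0)$ by integrating by parts against the antiderivative $F$, whose growth you correctly control through $\|f\|_{\alpha,\w}$ so that the boundary terms vanish against the Gaussian uniformly in $t\in(0,1]$ and in the weight parameters. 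What each approach buys: the paper's version is shorter and leans on standard semigroup-plus-interpolation facts, but it is silent on how the negative-exponent weighted spaces (defined here through antiderivatives) fit into an interpolation scale and on the case where the \emph{source} regularity is negative; your version is longer but self-contained, yields the stated uniformity in $t$, $\ell$, $\kappa$ explicitly, and handles the negative-regularity input exactly in the terms in which those spaces are defined, which is precisely where the paper's sketch is thinnest. One cosmetic point: your uniformity is stated for $\kappa\in(0,\kappa_0]$ rather than $|\kappa|\le\kappa_0$, but this matches the paper's own convention that the exponent $\kappa$ is always taken positive, so nothing is lost.
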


\begin{proof}
The proof is standard: one first verifies that the semigroup preserves these norms,
so that the case $\beta = \alpha$ is covered. The case of integer values of $\beta$
can easily be verified by an explicit calculation. The remaining values then follow by
interpolation.
\end{proof}

\subsection{Bounds and convergence of $Y^\eps$ and $Z^\eps$}
For any integer $k\ge 2$, define the $k$-point correlation function $\Phi^{(k)}$
for $x, t \in \R^k$ by
\begin{equ}
\Phi^{(k)}(x,t) = \E \bigl(V(x_1,t_1)\ldots V(x_k, t_k)\bigr)\;.
\end{equ}
(In particular, $\Phi^{(2)}(x_1,t_1,x_2,t_2) = \Phi(x_1-x_2,t_1-t_2)$, where $\Phi$ is the
correlation function of $V$ defined above.) With these notations at hand, we
have the following bound which will prove to be useful:

\begin{lemma}\label{ass:cor4}
The function $\Psi^{(4)}$ given by
\begin{equ}
\Psi^{(4)}(x,t) = \Phi^{(4)}(x,t) - \Phi(x_1-x_2,t_1-t_2)\Phi(x_3-x_4,t_3-t_4)\;,
\end{equ}
satisfies the bound
\begin{equs}
|\Psi^{(4)}(x,t) |
&\le  \eta(|x_1-x_3|+|t_1-t_3|)\eta(|x_2-x_4|+|t_2-t_4|) \label{e:boundPsi}\\
&\qquad + \eta(|x_1-x_4|+|t_1-t_4|)\eta(|x_2-x_3|+|t_2-t_3|)\;,
\end{equs}
where the function $\eta\colon \R_+ \to \R_+$ is defined by
\begin{equation*}
\eta(r)=\sqrt{K\rho(r/3)},\qquad \text{\rm with }\
K=4\big(\|V(x,t)\|_{2}\|V^3(x,t)\|_{2}+ \|V^2(x,t)\|^2_{2}\big),
\end{equation*}
where we write $\|\cdot\|_2$ for the $L^2(\Omega)$ norm of a real-valued random variable.
\end{lemma}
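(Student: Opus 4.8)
The plan is to reduce the bound on $\Psi^{(4)}$ to pairwise applications of the maximal correlation inequality from Definition~\ref{def:cor}. First I would observe that, by stationarity, we may think of the four points $p_i=(x_i,t_i)$ as arbitrary, and we want to exploit the fact that $\Psi^{(4)}$ is precisely the "connected'' part of the $4$-point function that survives after subtracting the product $\Phi(p_1-p_2)\Phi(p_3-p_4)$. The key point is that $\Psi^{(4)}$ vanishes whenever the pair $\{p_1,p_2\}$ is far from the pair $\{p_3,p_4\}$, \emph{and also} whenever one can split the four points into two far-apart pairs in either of the two "crossing'' ways, $\{p_1,p_3\}$ vs.\ $\{p_2,p_4\}$ or $\{p_1,p_4\}$ vs.\ $\{p_2,p_3\}$. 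So the strategy is to produce one estimate controlled by $\eta(d(\{p_1,p_3\},\{p_2,p_4\}))$-type quantities and a symmetric one controlled by the other crossing, and then note that at least one of the two crossings must separate the points enough to be useful; adding the two bounds gives \eref{e:boundPsi}.

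Concretely, for the first term I would write $\Psi^{(4)}(x,t) = \Cov\bigl(V(p_1)V(p_3),\,V(p_2)V(p_4)\bigr) + R$, where the remainder $R$ collects the terms $\E(V(p_1)V(p_3))\E(V(p_2)V(p_4)) - \E(V(p_1)V(p_2))\E(V(p_3)V(p_4))$ coming from the subtraction; hmm, this particular splitting is not quite symmetric, so more care is needed. A cleaner route is to use the general identity
\begin{equ}
\Phi^{(4)}(x,t) - \Phi^{(2)}(p_1,p_2)\Phi^{(2)}(p_3,p_4) = \Cov\bigl(V(p_1)V(p_2),\,V(p_3)V(p_4)\bigr)\;,
\end{equ}
which is immediate. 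Now I apply Definition~\ref{def:cor}: taking $K_1$ to be a small neighbourhood of $\{p_1,p_2\}$ and $K_2$ a small neighbourhood of $\{p_3,p_4\}$, centering the variables $\phi_1 = V(p_1)V(p_2) - \E(\cdot)$ and $\phi_2 = V(p_3)V(p_4) - \E(\cdot)$, and using $\|\phi_i\|_2 \le \|V^2\|_2 + \|V\|_2^2 \le 2\|V^2\|_2$ by stationarity and Cauchy--Schwarz (actually $\|V(p_1)V(p_2)\|_2 \le \|V^2\|_2$ by Cauchy--Schwarz and stationarity, and the mean is bounded likewise), we get that this covariance is bounded by $\rho(r)$ times a constant built from the $L^2$ norms, where $r$ is the distance between the two pairs. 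However this only directly yields a bound in terms of one separation, whereas \eref{e:boundPsi} is a product of \emph{two} $\eta$'s over the crossing distances.

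The resolution — and this is the main obstacle — is to bound the covariance \emph{twice}, once splitting off $V(p_1)$ alone against $V(p_2)V(p_3)V(p_4)$ and once splitting $V(p_3)$ alone against the other three, then combine. That is, the covariance is also equal to $\Cov(V(p_1), V(p_2)V(p_3)V(p_4))$ plus lower-order product terms, giving a factor $\sqrt{\rho}$ in terms of the distance from $p_1$ to $\{p_2,p_3,p_4\}$, with the other factor $\|V\|_2\|V^3\|_2$; iterating appropriately and using $\rho$ decreasing, one can arrange a product $\sqrt{K\rho(\cdot/3)}\sqrt{K\rho(\cdot/3)}$ where the two arguments are the crossing distances $|p_1-p_3|$ and $|p_2-p_4|$ (the factor $1/3$ accounting for the fact that the relevant $\sigma$-algebras sit in balls of radius comparable to the pair diameters, so the true separation is at least a third of the nominal crossing distance by the triangle inequality). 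The constant $K = 4(\|V\|_2\|V^3\|_2 + \|V^2\|_2^2)$ is then exactly what falls out of collecting the two types of Cauchy--Schwarz estimates ($\|V\|_2\|V^3\|_2$ from the $1$--$3$ split and $\|V^2\|_2^2$ from the $2$--$2$ split), with the factor $4$ absorbing the cross terms. Finally, repeating the whole argument with the roles of $p_3$ and $p_4$ interchanged produces the second term of \eref{e:boundPsi}, and summing the two bounds (each of which dominates $|\Psi^{(4)}|$ on the region where its crossing is the "good'' one) completes the proof. The routine parts — the explicit Cauchy--Schwarz bounds on $\|V(p_i)V(p_j)\|_2$ and the bookkeeping of which ball contains which $\sigma$-algebra — I would relegate to a line or two.
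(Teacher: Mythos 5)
You have assembled the right tools --- the identity $\Psi^{(4)} = \Cov\bigl(V(\xi_1)V(\xi_2),\,V(\xi_3)V(\xi_4)\bigr)$ with $\xi_i=(x_i,t_i)$, the $2$--$2$ and $1$--$3$ applications of Definition~\ref{def:cor}, the geometric mean of two bounds on the same quantity to produce the two square roots, and a factor $1/3$ from the triangle inequality --- but the step you describe as ``iterating appropriately'' is exactly where the proof lives, and the geometric claim you invoke there is false. You assert that the separation entering the $1$--$3$ bound, $\mathrm{dist}(\xi_1,\{\xi_2,\xi_3,\xi_4\})$, is at least a third of the crossing distance $|\xi_1-\xi_3|$. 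Take $\xi_2$ adjacent to $\xi_1$ and $\xi_4$ adjacent to $\xi_3$, with the two clusters far apart: then $\mathrm{dist}(\xi_1,\{\xi_2,\xi_3,\xi_4\})\approx 0$ because of $\xi_2$, and likewise for the split isolating $V(\xi_3)$, so both of your $1$--$3$ bounds degenerate to $\rho(0)$ and give nothing, while \eref{e:boundPsi} requires smallness since both crossing distances are large. This clustered configuration is precisely the reason the product $\Phi(\xi_1-\xi_2)\Phi(\xi_3-\xi_4)$ is subtracted in the definition of $\Psi^{(4)}$, and it is handled not by $1$--$3$ splits but by the $2$--$2$ covariance bound that you wrote down and then discarded as giving ``only one separation'': if $R_2=\mathrm{dist}(\{\xi_1,\xi_2\},\{\xi_3,\xi_4\})$ dominates, then both intra-pair distances are $<R_2$, hence both crossing distances are $\le 3R_2$ by the triangle inequality, and $|\Psi^{(4)}|\le \rho(R_2)\|V^2\|_2^2\le \eta(3R_2)^2\le \eta(|\xi_1-\xi_3|)\,\eta(|\xi_2-\xi_4|)$ using only the monotonicity of $\rho$.

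In fact the proof is a trichotomy on which separation is dominant, and your sketch covers only one branch. With $R_1=\max_i\mathrm{dist}(\xi_i,\bigcup_{j\ne i}\{\xi_j\})$ and $R_2$ the largest pair-versus-pair separation: if $R_1\ge R_2$, all pairwise distances are $\le 3R_1$, and a single $1$--$3$ bound at $R_1$ for $\Phi^{(4)}$ together with $\rho(R_1)\|V\|_2^4$ for the subtracted product suffices, again converted into a product of two $\eta$'s by monotonicity; if $R_1<R_2$ with $R_2$ attained by the $\{\xi_1,\xi_2\}$-versus-$\{\xi_3,\xi_4\}$ pairing, one must use the $2$--$2$ bound as above; only when $R_1<R_2$ is attained by a crossing pairing does one verify that $\mathrm{dist}(\xi_1,\{\xi_2,\xi_3,\xi_4\})$ actually equals the crossing distance $|\xi_1-\xi_3|$ (and similarly for $\xi_2$ versus $\xi_4$), so that your geometric mean of two $1$--$3$ bounds applies at the correct distances --- with no factor $1/3$ needed in that branch. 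Without this case analysis the argument fails in the clustered regime, so the gap is genuine; the repair is the case distinction itself, which is the actual content of the paper's proof rather than a routine piece of bookkeeping.
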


\begin{remark}
In the Gaussian case, one has the identity
\begin{equ}
\Psi^{(4)}(x,t) = \Phi(x_1-x_3,t_1-t_3)\Phi(x_2-x_4,t_2-t_4) + \Phi(x_1-x_4,t_1-t_4)\Phi(x_2-x_3,t_2-t_3)\;,
\end{equ}
so that the bound \eref{e:boundPsi} follows from the 
fact that $\rho$ dominates the decay of the correlation function $\Phi$.
\end{remark}

\begin{proof}
For the sake of brevity denote $\xi_j=(x_j,t_j)$. We set
$$
R_1=\max\limits_{1\le i\le 4}\mathrm{dist}\Bigl(\xi_i,\bigcup\limits_{j\not=i}\{\xi_j\}\Bigr),\qquad
R_2=\max \mathrm{dist}\Bigl(\{\xi_{i_1},\xi_{i_2}\}, \{\xi_{i_3},\xi_{i_4}\}\Bigr),
$$
where the second maximum is taken over all permutations $\{i_1,i_2,i_3,i_4\}$ of $\{1,2,3,4\}$.

Consider first the case $R_1\geq R_2$. Without loss of generality we can assume that $R_1=\mathrm{dist}(\xi_1,\bigcup\limits_{j\not=1}\{\xi_j\})$.
 It is easily seen that, in the case under consideration,
\begin{equation}\label{geo_1}
\mathrm{dist}((\xi_i, \xi_j)\leq 3R_1,\qquad i,\,j=1,\,2,\,3,\,4.
\end{equation}
Then the functions $\Phi^{(4)}$ and $\Phi(\xi_1-\xi_2)\Phi(\xi_3-\xi_4)$ admit the following upper bounds:
\begin{equs}
|\Phi^{(4)}(\xi_1,\xi_2,\xi_3,\xi_4)|&=|\E(V(\xi_1)V(\xi_2)V(\xi_3)V(\xi_4))| \\
&\le \rho(R_1)\|V(\xi_1)\|_{2}\|V(\xi_2)V(\xi_3)V(\xi_4)\|_{2} \\
&\leq \rho(R_1)\|V(\xi)\|_{2}\|(V(\xi))^3\|_{2},
\end{equs}
and
$$
\Phi(\xi_1-\xi_2)\Phi(\xi_3-\xi_4)\le \rho(R_1)\|V\|^2_{2} \,\|V\|^2_{2}
$$
Therefore,
$$
|\Psi^{(4)}(x,t)|\leq\rho(\R_1)
\big(\ \|V(\xi)\|_{2}\|(V(\xi))^3\|_{2}+\|V\|^4_{2}\big)
$$
From (\ref{geo_1}) and the fact that $\rho$ is a decreasing function we derive
$$
K\rho(R_1)=\eta(3R_1)\eta(3R_1)\le \eta(|\xi_1-\xi_3|)\eta(|\xi_2-\xi_4|).
$$
This yields the desired inequality.

Assume now that $R_1<R_2$ and $\mathrm{dist}(\{\xi_1,\xi_2\}, \{\xi_3,\xi_4\})= R_2$. In this case
\begin{equation}\label{geo_2}
\mathrm{dist}(\xi_1,\xi_2)<R_2 \quad\hbox{and }\ \ \mathrm{dist}(\xi_3,\xi_4)<R_2.
\end{equation}
Indeed, if we assume that $\mathrm{dist}(\xi_1,\xi_2)\geq R_2$, then $\mathrm{dist}(\xi_1,\{\xi_2,\xi_3,\xi_4\})\geq R_2$ and, thus, $R_1\geq R_2$ which contradicts our assumption.
We have
\begin{equs}\label{geo_3}
\big|\Psi^{(4)}&(\xi_1,\xi_2,\xi_3,\xi_4)\big|=
\big|\Phi^{(4)}(\xi_1,\xi_2,\xi_3,\xi_4)-\Phi(\xi_1-\xi_2)\Phi(\xi_3-\xi_4)\big| \\
&=\big|\E\big([V(\xi_1)V(\xi_2)-\E(V(\xi_1)V(\xi_2))] [V(\xi_3)V(\xi_4)-\E(V(\xi_3)V(\xi_4))]\big)\big|\\
&\leq \rho(R_2)\|(V(\xi))^2\|^2_{2}\;.
\end{equs}
In view of (\ref{geo_2}), $\mathrm{dist}(\xi_1,\xi_3)\le 3R_2$ and $\mathrm{dist}(\xi_2,\xi_4)\le 3R_2$. Therefore,
$$
K\rho(R_2)\le\eta(|\xi_1-\xi_3|)\eta(|\xi_2-\xi_4|),
$$
and the desired inequality follows.

It remains to consider the case $R_1<R_2$ and $\mathrm{dist}(\{\xi_1,\xi_3\}, \{\xi_2,\xi_4\})= R_2$; the case $\mathrm{dist}(\{\xi_1,\xi_4\}, \{\xi_2,\xi_3\})= R_2$
can be addressed in the same way. In this case
$$
\mathrm{dist}(\xi_1,\xi_2)\geq R_2,\quad \mathrm{dist}(\xi_1,\xi_4)\geq R_2, \quad
\mathrm{dist}(\xi_1,\xi_3)< R_2.
$$
Therefore, $\mathrm{dist}(\xi_1,\{\xi_2,\xi_3,\xi_4\})= \mathrm{dist}(\xi_1,\xi_3)$, and we have
\begin{equs}
|\Phi^{(4)}(\xi_1,\xi_2,\xi_3,\xi_4)|&\le \rho(|\xi_1-\xi_3|)
\|V(\xi)\|_{2}\|(V(\xi))^3\|_{2}\\
|\Phi(\xi_1-\xi_2)\Phi(\xi_3-\xi_4)|&\le \rho(R_2)\|V\|^4_{2}\le  \rho(|\xi_1-\xi_3|) \|V\|^4_{2}.
\end{equs}
This yields
$$
|\Psi^{(4)}(\xi_1,\xi_2,\xi_3,\xi_4)|\le \rho(|\xi_1-\xi_3|)
\big(\|V(\xi)\|_{2}\|(V(\xi))^3\|_{2}+
\|V\|^4_{2}\big)
$$
In the same way one gets
$$
|\Psi^{(4)}(\xi_1,\xi_2,\xi_3,\xi_4)|\le \rho(|\xi_2-\xi_4|)
\big(\|V(\xi)\|_{2}\|(V(\xi))^3\|_{2}+
\|V\|^4_{2}\big)
$$
From the last two estimates we obtain
\begin{equs}
|\Psi^{(4)}(\xi_1,\xi_2,\xi_3,\xi_4)|&\leq\sqrt{\rho(|\xi_1-\xi_3|)}
\sqrt{\rho(|\xi_2-\xi_4|)} \big(\|V(\xi)\|_{2}\|(V(\xi))^3\|_{2}+
\|V\|^4_{2}\big)\\
&\leq \eta(|\xi_1-\xi_3|)\eta(|\xi_2-\xi_4|).
\end{equs}
This implies the desired inequality and completes the proof of Lemma \ref{ass:cor4}.
\end{proof}

In order to prove our next result, we will need the following small lemma:
\begin{lemma}\label{lem:intPower}
Let $F\colon \R_+ \to \R_+$ be an increasing function with $F(r) \le r^q$.
Then, $\int_0^\infty (1+r)^{-p} dF(r) < \infty$ as soon as $p > q$.
\end{lemma}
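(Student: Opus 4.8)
The plan is to estimate the Stieltjes integral by a dyadic decomposition of the range of integration, which has the advantage of requiring nothing from $F$ beyond monotonicity. First I would dispose of the bounded part: on $[0,1]$ the total mass of the measure $dF$ is $F(1)-F(0)\le F(1)\le 1$, while $(1+r)^{-p}\le 1$ since we may assume $p>q\ge 0$ (if $q<0$ the hypothesis $F(r)\le r^q$ together with $F$ increasing and nonnegative forces $F\equiv 0$, and there is nothing to prove), so $\int_0^1 (1+r)^{-p}\,dF(r)\le 1$. For the tail, I would write $[1,\infty)=\bigcup_{n\ge 0}[2^n,2^{n+1})$ and estimate, on the $n$-th interval, $(1+r)^{-p}\lesssim 2^{-np}$ together with $\int_{[2^n,2^{n+1})}dF\le F(2^{n+1})\le 2^{(n+1)q}$. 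Summing gives
\begin{equ}
\int_1^\infty (1+r)^{-p}\,dF(r)\lesssim \sum_{n\ge 0} 2^{-np}\,2^{(n+1)q}\lesssim \sum_{n\ge 0} 2^{-n(p-q)}<\infty\;,
\end{equ}
the geometric series converging precisely because $p>q$.

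Alternatively, and perhaps closer in spirit to how such a statement is usually recorded, one can integrate by parts in the Riemann--Stieltjes sense. Since $F$ is increasing it is of locally bounded variation, so for every $R>0$ one has
\begin{equ}
\int_0^R (1+r)^{-p}\,dF(r) = (1+R)^{-p}F(R) - F(0) + p\int_0^R (1+r)^{-p-1}F(r)\,dr\;.
\end{equ}
Here the boundary term is bounded by $(1+R)^{-p}R^q\to 0$ as $R\to\infty$, while the integrand of the remaining integral satisfies $p(1+r)^{-p-1}F(r)\le p(1+r)^{-p-1}r^q\lesssim (1+r)^{q-p-1}$, which is integrable over $\R_+$ exactly when $p-q>0$. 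Letting $R\to\infty$ then yields the claim, with the explicit bound $\int_0^\infty (1+r)^{-p}\,dF(r)\le \frac{p}{p-q}$ when $F(0)=0$.

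The statement is essentially routine and I do not anticipate a genuine obstacle; the only point worth a moment's care is that $F$ is assumed merely increasing, not continuous or differentiable, which is why I would phrase everything through the measure $dF$ (or invoke the Riemann--Stieltjes integration-by-parts formula, valid for functions of bounded variation) rather than writing $dF(r)=F'(r)\,dr$. I would therefore present the dyadic argument as the primary proof, as it is the shortest and sidesteps any regularity discussion entirely.
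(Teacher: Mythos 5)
Your primary dyadic argument is correct and is essentially identical to the paper's proof: split off $[0,1]$, decompose the tail into intervals $[2^k,2^{k+1}]$, bound the weight by $2^{-pk}$ and the $dF$-mass by $F(2^{k+1})\le 2^{q(k+1)}$, and sum the geometric series using $p>q$. The integration-by-parts alternative you sketch is also fine but unnecessary; the paper does exactly what your first argument does.
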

\begin{proof}
We have $\int_0^\infty (1+r)^{-p} dF(r) \le 1 + \int_1^\infty r^{-p} dF(r)$, so we only 
need to bound the latter. We write
\begin{equ}
\int_1^\infty r^{-p} dF(r) \le \sum_{k \ge 0} \int_{2^k}^{2^{k+1}}r^{-p}dF(r)
\le \sum_{k \ge 0}2^{-pk} \int_{2^k}^{2^{k+1}}dF(r) \le \sum_{k \ge 0}2^{-pk} 2^{q(k+1)}\;.
\end{equ}
This expression is summable as soon as $p > q$, thus yielding the claim.
\end{proof}
	
\begin{lemma}\label{lem:moment_int_c}
Fix $t > 0$ and let $\varphi :\R\times\R_+\to\R_+$ be a smooth function with compact support.
Define $\varphi_\delta(x,t)=\delta^{-3}\varphi\left(\frac{x}{\delta},\frac{t}{\delta^2}\right)$. Then, for all $p\ge1$, $\eps, \delta>0$, one has the bound
\begin{equs}
{}&\left[\E\left(\int_0^t\int_\R \varphi_\delta(x-y,t-s)V_\eps(y,s)\,dy\,ds\right)^p\right]^{1/p}\\
&\qquad\qquad\le C_\phi \bigl(\eps^{-1/2-\alpha/4}\wedge \delta^{-1/2}\eps^{-\alpha/4}\wedge \delta^{-3/2}\eps^{\alpha/4} \bigr)\;,
\end{equs}
where $C_\phi$ depends on $p$, on the supremum and the support of $\phi$, and on the bound of Assumption~\ref{ass:Phi}.
\end{lemma}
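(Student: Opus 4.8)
The plan is to establish the three separate bounds
\[
\|I\|_p\lesssim\eps^{-1/2-\alpha/4},\qquad
\|I\|_p\lesssim\eps^{-\alpha/4}\delta^{-1/2},\qquad
\|I\|_p\lesssim\eps^{\alpha/4}\delta^{-3/2},
\]
where $I$ is the random variable appearing on the left-hand side of the claimed bound, $\|\cdot\|_p$ is the $L^p(\Omega)$-norm, and all implicit constants are allowed to depend on $p$ and on $\varphi$; the asserted estimate is then just their minimum. Since $\|I\|_p\le\|I\|_{2k}$ whenever $2k\ge p$, it suffices to treat even exponents $p=2k$. The first bound needs no probability: by Minkowski's integral inequality, stationarity, and $\|\varphi_\delta\|_{L^1}=\|\varphi\|_{L^1}$,
\[
\|I\|_p\le\int_0^t\!\!\int_\R\varphi_\delta(x-y,t-s)\,\|V_\eps(y,s)\|_p\,dy\,ds
=\|\varphi\|_{L^1}\,\eps^{-(1/2+\alpha/4)}\,\|V(0,0)\|_p,
\]
which is finite by Assumption~\ref{ass:Phi}.

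For the other two bounds I would pass to the coordinates in which the potential oscillates at unit scale. Setting $(\tilde y,\tilde s)=(y/\eps,s/\eps^\alpha)$ and changing variables,
\[
I=\eps^{1/2+3\alpha/4}\int_{\R^2}\tilde G(\tilde y,\tilde s)\,V(\tilde y,\tilde s)\,d\tilde y\,d\tilde s,
\qquad
\tilde G(\tilde y,\tilde s)=\varphi_\delta(x-\eps\tilde y,t-\eps^\alpha\tilde s),
\]
where $\|\tilde G\|_\infty=\delta^{-3}\|\varphi\|_\infty$ and $\tilde G$ is supported in a rectangle of side lengths $\lesssim\delta/\eps$ in $\tilde y$ and $\lesssim\delta^2/\eps^\alpha$ in $\tilde s$. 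Tiling $\R^2$ by unit boxes $(Q_i)_{i\in\Z^2}$ and writing $W_i=\int_{Q_i}\tilde G\,V$, we have $I=\eps^{1/2+3\alpha/4}\sum_i W_i$, where each $W_i$ is $\FF_{Q_i}$-measurable, satisfies $\E W_i=0$ because $\E V=0$, and obeys
\[
\|W_i\|_{2k}\le\|\tilde G\|_\infty\,\bigl|Q_i\cap\mathrm{supp}\,\tilde G\bigr|\,\|V(0,0)\|_{2k}
\lesssim\delta^{-3}\,\min(1,\delta/\eps)\,\min(1,\delta^2/\eps^\alpha),
\]
while the number $N$ of boxes meeting the support of $\tilde G$ is $\lesssim(1+\delta/\eps)(1+\delta^2/\eps^\alpha)$.

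The heart of the matter is then a Rosenthal-type moment bound: for centred, $\FF_{Q_i}$-measurable random variables indexed by unit boxes $Q_i$, one has, under Assumption~\ref{ass:cor},
\[
\Bigl\|\sum_i W_i\Bigr\|_{2k}\lesssim\sqrt N\,\max_i\|W_i\|_{2k}.
\]
To prove this one expands the joint moment $\E[W_{i_1}\cdots W_{i_{2k}}]$ as a sum, over partitions of the indices into blocks, of products of ``connected'' contributions, with far-apart blocks decoupling up to errors governed by $\rho$; a block of size one contributes nothing since $\E W_i=0$, so every block has size at least two and there are at most $k$ of them; each connected contribution of a block of size $m$ is controlled by a tree-graph estimate whose edges carry factors $\sqrt{\rho(\cdot)}$, and the super-polynomial decay of $\rho$ from Assumption~\ref{ass:cor} makes the sum over the positions of each such block converge, up to one free index contributing a factor $N$. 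Multiplying over the at most $k$ blocks gives $\E(\sum_i W_i)^{2k}\lesssim N^k\max_i\|W_i\|_{2k}^{2k}$. The needed control of the joint moments attached to a block is exactly a $2k$-point generalisation of Lemma~\ref{ass:cor4}, proved by the same, more laborious, geometric case analysis.

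Combining these ingredients,
\[
\|I\|_{2k}\lesssim\eps^{1/2+3\alpha/4}\,\delta^{-3}\,\sqrt{(1+\delta/\eps)(1+\delta^2/\eps^\alpha)}\;\min(1,\delta/\eps)\,\min(1,\delta^2/\eps^\alpha).
\]
Writing $a=\delta/\eps$, $b=\delta^2/\eps^\alpha$ and using $\eps^{1/2+3\alpha/4}\delta^{-3}=\eps^{-1/2-\alpha/4}a^{-1}b^{-1}$, the right-hand side equals $\eps^{-1/2-\alpha/4}\,\phi(a)\,\phi(b)$ with $\phi(r)=r^{-1}\min(1,r)\sqrt{1+r}$, and since $\phi(r)\lesssim\min(1,r^{-1/2})$ one reads off the three claimed bounds by distinguishing $a\le1$ versus $a\ge1$ and $b\le1$ versus $b\ge1$: the regimes produce $\eps^{-1/2-\alpha/4}$, $\eps^{-\alpha/4}\delta^{-1/2}$, and $\eps^{\alpha/4}\delta^{-3/2}$ respectively (the remaining regime is vacuous, or in any case dominated by the first bound). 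The main obstacle is the Rosenthal-type inequality, equivalently the $2k$-point version of Lemma~\ref{ass:cor4}: for $p=2$ a direct second-moment computation suffices, but for general $p$ one really needs the full tree-graph/clustering estimates for fields with super-polynomially decaying maximal correlations.
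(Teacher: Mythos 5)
Your strategy is sound and your final bookkeeping is correct: reducing to even moments, the Minkowski/$L^1$ argument for the bound $\eps^{-1/2-\alpha/4}$, the rescaling to unit-scale variables, and the counts $N\lesssim(1+\delta/\eps)(1+\delta^2/\eps^\alpha)$, $\|W_i\|_{2k}\lesssim\delta^{-3}\min(1,\delta/\eps)\min(1,\delta^2/\eps^\alpha)$ do recombine into all three claimed bounds, and in each regime the bound you produce is indeed comparable to the minimum of the three. Where you genuinely diverge from the paper is at the core estimate. The paper never discretises and never proves (or needs) a general Rosenthal-type inequality: it works directly with the $2k$-fold integral in the rescaled variables, bounds $\bigl|\E\prod_{i=1}^{2k}V(\tilde y_i,\tilde s_i)\bigr|$ by $C\rho(R)$ whenever one point is at distance at least $R$ from all the others --- a single application of Definition~\ref{def:cor}, using that $V$ is centred --- and pairs this with the volume bound $|\mathcal{V}_{\delta,\eps}|(R)\lesssim(\delta/\eps)^k(\delta^2/\eps^\alpha)^kR^{2k}$ for the set of configurations in which no point is $R$-isolated; integrating $\rho$ against this layer-cake measure via Lemma~\ref{lem:intPower} yields the pairing-volume scaling directly, the three bounds being obtained separately in the regimes $\delta>\max(\eps,\eps^\alpha)$, $\delta<\min(\eps,\eps^\alpha)$ and the intermediate one (where only spatial decorrelation is exploited). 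Your route instead funnels everything through the inequality $\|\sum_iW_i\|_{2k}\lesssim\sqrt{N}\,\max_i\|W_i\|_{2k}$ for centred, box-measurable functionals of a field with super-polynomially decaying maximal correlation. That inequality is true under Assumption~\ref{ass:cor}, and your unit-box formulation has the nice feature of giving all three bounds in one stroke; but it is also the entire content of the lemma, and you only sketch it: the partition/cumulant expansion, the decoupling of distant blocks with errors controlled by $\rho$, and the tree-graph estimate with $\sqrt{\rho}$ edge weights (your ``$2k$-point generalisation of Lemma~\ref{ass:cor4}'') require real work, comparable to or exceeding the paper's whole proof. It is worth noting that the paper needs no multi-point analogue of Lemma~\ref{ass:cor4} here (that lemma enters only later, for $Z^\eps$): since one only needs the $N^k$ pairing scaling and not any factorisation into blocks, isolating the most separated point and playing the polynomial growth in $R$ of the near-diagonal volume against the arbitrarily fast decay of $\rho$ suffices. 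So: correct arithmetic and a legitimately different (discretisation plus mixing-moment-inequality) route, but the deferred Rosenthal-type step is the crux of the matter, and the paper's isolation-radius argument shows it can be handled much more cheaply than your sketch suggests.
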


\begin{proof}
We consider separately the cases $\delta>\max(\eps,\eps^{\alpha})$, $\delta<\min(\eps,\eps^\alpha)$, 
as well as  $\min(\eps,\eps^\alpha)\leq \delta\leq \max(\eps,\eps^{\alpha})$.

Assume first that $\delta>\max(\eps,\eps^{\alpha})$. Without loss of generality we also assume that $p$ is even, that is $p=2k$ with $k\in\mathbb N$.
Then
\begin{equs}
\mathcal{J}_p^{\eps,\delta}&:=\E\left(\int_0^t\int_\R \varphi_\delta(x-y,t-s)V_\eps(y,s)\,dy\,ds\right)^p\\
&\;=\int_0^t\!\!\!\!\dots\!\!\int_0^t\int_\R\!\!\!\!\dots\!\!\int_\R \prod\limits_{i=1}^{2k}\varphi_\delta(x-y_i, t-s_i)
\E\Big(\prod\limits_{i=1}^{2k}V_\eps(y_i,s_i)\Big)d\vec yd\vec s,
\end{equs}
where $d\vec y=dy_1\dots dy_{2k}$ and $d\vec s=ds_1\dots ds_{2k}$. Changing the variables $\tilde y_i=\eps^{-1}y_i$ and
$\tilde s_i=\eps^{-\alpha}s_i$, and considering the definition of $\varphi_\delta$ and $V_\eps$, we obtain
\begin{equ}
\mathcal{J}_p^{\eps,\delta}=\delta^{-6k}\eps^{-k-\frac{\alpha k}{2}}\eps^{2k+2\alpha k}
\int\limits_{[0,t/\eps^\alpha]^{2k}}\int\limits_{\mathbb R^{2k}}\prod\limits_{i=1}^{2k}\varphi\Big(\frac{x-\eps\tilde y_i}{\delta},\frac{t-\eps^\alpha\tilde s_i}{\delta^2}\Big)
\E\Big(\prod\limits_{i=1}^{2k}V(\tilde y_i,\tilde s_i)\Big)
d\vec{\tilde y}d\vec{\tilde s}.
\end{equ}
The support of the function $\prod\limits_{i=1}^{2k}\varphi\big(\frac{x-\eps\tilde y_i}{\delta},\frac{t-\eps^\alpha\tilde s_i}{\delta^2}\big)$
belongs to the rectangle
$(x-k\frac{\delta}{\eps}s_\varphi,x+k\frac{\delta}{\eps}s_\varphi)^{2k}
\times(t-k\frac{\delta^2}{\eps^\alpha}s_\varphi,t+ k\frac{\delta^2}{\eps^\alpha}s_\varphi)^{2k}$, where
$s_\varphi$ is the diameter of support of $\varphi=\varphi(y,s)$.
Denote
$\Pi^1_{\delta,\eps}=(0,2k\frac{\delta}{\eps}s_\varphi)^{2k}$
and $\Pi^2_{\delta,\eps}=(0,2k\frac{\delta^2}{\eps^\alpha}s_\varphi)^{2k}$.
Since $V(y,s)$ is stationary, we have
\begin{equation}\label{calj}
\mathcal{J}_p^{\eps,\delta}\le
\delta^{-6k}\eps^{-k-\frac{\alpha k}{2}}\eps^{2k+2\alpha k}
\|\varphi\|\big._{C}^{2k}\int\limits_{(0,2k\frac{\delta}{\eps}s_\varphi)^{2k}}
\int\limits_{(0,2k\frac{\delta^2}{\eps^\alpha}s_\varphi)^{2k}}
\Big|\E\Big(\prod\limits_{i=1}^{2k}V(\tilde y_i,\tilde s_i)\Big)
\Big|d\vec{\tilde y}d\vec{\tilde s}.
\end{equation}
For any $R\ge 0$ we introduce a subset of $\mathbb R^{4k}$
$$
\mathcal{V}_{\delta,\eps}(R)=\big\{(\tilde y,\tilde s)\in \Pi^1_{\delta,\eps}\times\Pi^2_{\delta,\eps}\,:\,
\max\limits_{1\le j\le 2k}\mathrm{dist}(\tilde y_j,\bigcup\limits_{i\not=j}\tilde y_i)\le R,\ \max\limits_{1\le j\le 2k}\mathrm{dist}(\tilde s_j,\bigcup\limits_{i\not=j}\tilde s_i)\le R\big\},
$$
and denote by $|\mathcal{V}_{\delta,\eps}|(R)$ the Lebesgue measure of this set.
It is easy to check that the set $\mathcal{V}_{\delta,\eps}(0)$
is the union of sets of the form
$$
\{(\tilde y,\tilde s)\in \Pi^1_{\delta,\eps}\times\Pi^1_{\delta,\eps}\,:\, \tilde y_{i_1}=\tilde y_{i_2}, \dots, \tilde y_{i_{2k-1}}=\tilde y_{i_{2k}},\  \tilde s_{j_1}=\tilde s_{j_2}, \dots, \tilde s_{j_{2k-1}}=\tilde s_{j_{2k}}\}
$$
with $i_l\not=i_m$ and $j_l\not=j_m$ if $l\not=m$, that is,
$\mathcal{V}_{\delta,\eps}(0)$ is the union of a finite number of subsets of $2k$-dimensional planes in $R^{4k}$.
The $2k$-dimensional measure of this set satisfies the
following upper bound
$$
|\mathcal{V}_{\delta,\eps}(0)|_{2k}\le C(k)\Big(\frac{\delta}{\eps}\Big)^{k} \Big(\frac{\delta^2}{\eps^\alpha}\Big)^{k},
$$
Therefore,
\begin{equation}\label{vr_bou}
|\mathcal{V}_{\delta,\eps}|(R)\lesssim \Big(\frac{\delta}{\eps}\Big)^{k} \Big(\frac{\delta^2}{\eps^\alpha}\Big)^{k} R^{2k},
\end{equation}
For each $(\tilde y, \tilde s)\in \mathcal{V}_{\delta,\eps}(R)$ we have
\begin{equation}\label{cc_bou}
\Big|\E\Big(\prod\limits_{i=1}^{2k}V(\tilde y_i,\tilde s_i)\Big)\Big|
\le \rho(R)C_1(k)\|V\|_{L^2(\Omega)}\|V^{2k-1}\|_{L^2(\Omega)}.
\end{equation}
Combining (\ref{calj}), (\ref{vr_bou}) and (\ref{cc_bou})
yields
\begin{equ}
\mathcal{J}_p^{\eps,\delta}\lesssim \delta^{-6k}\eps^{-k-\frac{\alpha k}{2}}\eps^{2k+2\alpha k}
  \int_0^\infty \rho(R)\,d|\mathcal{V}_{\delta,\eps}|(R) \lesssim  \delta^{-3k}\eps^{\frac{\alpha k}{2}}\;.
\end{equ}
Here, the last inequality holds due to Assumption~\ref{ass:cor}, combined with 
\eref{vr_bou} and Lemma~\ref{lem:intPower}. Therefore, recalling that $p=2k$, we have the bound
\begin{equation}\label{j_1}
(\mathcal{J}_p^{\eps,\delta})^{1/p}\lesssim \delta^{-3/2}\eps^{\alpha/4}.
\end{equation}

In the case $\delta<\min(\eps,\eps^\alpha)$ we have
\begin{equs}
\mathcal{J}_p^{\eps,\delta}
&=\int_0^t\!\!\!\!\dots\!\!\int_0^t\int_\R\!\!\!\!\dots\!\!\int_\R \prod\limits_{i=1}^{2k}\varphi_\delta(x-y_i, t-s_i)
\E\Big(\prod\limits_{i=1}^{2k}V_\eps(y_i,s_i)\Big)
d\vec y\,d\vec s\\
&\le\int_0^t\!\!\!\!\dots\!\!\int_0^t\int_\R\!\!\!\!\dots\!\!\int_\R \prod\limits_{i=1}^{2k}|\varphi_\delta(x-y_i, t-s_i)|\,
\Bigl|\E\Big(\prod\limits_{i=1}^{2k}V_\eps(y_i,s_i)\Big)\Bigr|
d\vec y\,d\vec s\\
&\le \E\big((V_\eps(y_1,s_1)^{2k}\big)\int_0^t\!\!\!\!\dots\!\!\int_0^t \int_\R\!\!\!\!\dots\!\!\int_\R \prod\limits_{i=1}^{2k}|\varphi_\delta(x-y_i, t-s_i)|d\vec y\,d\vec s\\
&\lesssim \eps^{-k-\frac{\alpha k}{2}}\|\varphi\|_{L^1}^{2k}\;,
\end{equs}
so that
\begin{equation}\label{j_2}
(\mathcal{J}_p^{\eps,\delta})^{1/p}\lesssim \eps^{-1/2-\alpha/4}\;.
\end{equation}

Finally, if we are in the regime $\eps<\delta<\eps^{\alpha/2}$, then
\begin{equs}
\mathcal{J}_p^{\eps,\delta}&=\delta^{-6k}\eps^{k+{3\alpha\over 2} k}
 \int\limits_{[0,t/\eps^\alpha]^{2k}}\int\limits_{\mathbb R^{2k}}\prod\limits_{i=1}^{2k}\varphi\Big(\frac{x-\eps\tilde y_i}{\delta},\frac{t-\eps^\alpha\tilde s_i}{\delta^2}\Big)
\E\Big(\prod\limits_{i=1}^{2k}V(\tilde y_i,\tilde s_i)\Big)
d\vec{\tilde y}\,d\vec{\tilde s}\\
&\le \delta^{-6k}\eps^{k+3\alpha k/2}\,
\|\varphi\|_{L^\infty}^{2k}
\int\limits_{(0,2k\frac{\delta^2}{\eps^\alpha}s_\varphi)^{2k}}
\int\limits_{(0,2k\frac{\delta}{\eps}s_\varphi)^{2k}}
\Big|\E\Big(\prod\limits_{i=1}^{2k}V(\tilde y_i,\tilde s_i)\Big)\Big|
d\vec{\tilde y}\,d\vec{\tilde s}\\
&\lesssim\delta^{-6k}\eps^{k+3\alpha k/2}\,
\|\varphi\|_{L^\infty}^{2k}
\int\limits_{(0,2k\frac{\delta^2}{\eps^\alpha}s_\varphi)^{2k}}\!\!\!\!\!
\|V\|_{L^2(\Omega)}\|V^{2k-1}\|_{L^2(\Omega)}
\Big(\frac{\delta}{\eps}\Big)^k\int\limits_0^\infty\rho(R)R^{k-1}\,dR\\
&\lesssim \delta^{-k}\eps^{-\alpha k/2}.
\end{equs}
Hence,
\begin{equation}\label{j_3}
(\mathcal{J}_p^{\eps,\delta})^{1/p}\lesssim \delta^{-1/2}\eps^{-\alpha/4}
\end{equation}
so that, combining (\ref{j_1}), (\ref{j_2}) and (\ref{j_3}), the desired estimate holds.
\end{proof}


\begin{lemma}\label{lem:moment_int_c0}
Fix $t > 0$ and let $\varphi :\R\times\R_+\to\R_+$ be a function which is uniformly bounded and
decays exponentially in $x$, uniformly over $s \in [0,t]$.

Then, for all $p\ge1$, $\eps>0$, one has the bound
\begin{equ}
\left[\E\left(\int_0^t\int_\R \varphi(x-y,t-s)V_\eps(y,s)\,dy\,ds\right)^p\right]^{1/p}
\le C_\phi \left(\eps^{-1/2-\alpha/4}\wedge \eps^{-\alpha/4}\wedge \eps^{\alpha/4}
\right)\;.
\end{equ}
Here, the proportionality constant depends on $p$, on $t$, on the bounds on $\phi$, and on the bounds
of Assumption~\ref{ass:Phi}.
\end{lemma}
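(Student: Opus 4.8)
The plan is to reduce the claim directly to Lemma~\ref{lem:moment_int_c} applied with $\delta=1$. The first observation is that setting $\delta=1$ in Lemma~\ref{lem:moment_int_c} produces exactly the right-hand side $\eps^{-1/2-\alpha/4}\wedge\eps^{-\alpha/4}\wedge\eps^{\alpha/4}$ that appears in the present statement. Hence it suffices to decompose $\varphi$ into countably many pieces, each supported in a space-time box of unit size, to apply (a suitable form of) Lemma~\ref{lem:moment_int_c} to each piece, and to sum the resulting bounds, the exponential decay of $\varphi$ in $x$ guaranteeing that the sum converges.

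\textbf{Decomposition.} Concretely, I would fix smooth nonnegative partitions of unity $\{\psi(\,\cdot-n)\}_{n\in\mathbb Z}$ of $\R$ and $\{\chi(\,\cdot-m)\}_{m\ge0}$ of $\R_+$, with $\psi,\chi$ supported in $[-1,1]$, and write $\varphi=\sum_{n\in\mathbb Z}\sum_{m\ge0}\varphi_{n,m}$ with $\varphi_{n,m}(z,\tau)=\varphi(z,\tau)\psi(z-n)\chi(\tau-m)$. Then each $\varphi_{n,m}$ is supported in a box of side at most $2$; since the argument $t-s$ of $\varphi$ ranges over $[0,t]$, only the $O(1+t)$ indices with $0\le m\le t+1$ actually contribute; and the assumed exponential decay of $\varphi$ in $x$, uniform over $s\in[0,t]$, supplies constants $M,c>0$ with $\|\varphi_{n,m}\|_\infty\le Me^{-c|n|}$. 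By the triangle inequality in $L^p(\Omega)$, the $L^p(\Omega)$-norm of $\int_0^t\int_\R\varphi(x-y,t-s)V_\eps(y,s)\,dy\,ds$ is bounded by the sum over $n,m$ of the corresponding quantities for $\varphi_{n,m}$.

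\textbf{Bounding each piece, and the main point.} It then remains to estimate each piece, and here the key point is that the proof of Lemma~\ref{lem:moment_int_c} uses only the supremum and the \emph{diameter} of the support of the test function: after the rescaling $\tilde y_i=\eps^{-1}y_i$, $\tilde s_i=\eps^{-\alpha}s_i$, any translation of the support is absorbed by the stationarity of $V$, the possible truncation of the $s$-integration to $[0,t]$ can only decrease the quantity, and the test function enters only through a factor $\|\varphi\|_\infty^{2k}$, which becomes a single factor $\|\varphi\|_\infty$ after taking the $p$-th root. Consequently each $\varphi_{n,m}$ obeys the bound of Lemma~\ref{lem:moment_int_c} with $\delta=1$ and a constant $\lesssim\|\varphi_{n,m}\|_\infty\le Me^{-c|n|}$, otherwise depending only on $p$ and on the constants in Assumption~\ref{ass:Phi}. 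Summing the convergent series $\sum_{n\in\mathbb Z}e^{-c|n|}$ and accounting for the $O(1+t)$ admissible values of $m$ yields the claim, with $C_\phi$ depending on $p$, on $t$, on the bounds on $\phi$ (through $M$ and $c$), and on Assumption~\ref{ass:Phi}. The only point that genuinely deserves attention is thus the uniformity claim just made about the proof of Lemma~\ref{lem:moment_int_c} -- namely that its estimate holds, with a constant linear in the supremum of the test function, uniformly over translates of a unit box and over the above truncations -- but this is exactly what that (already given) proof delivers once one notes that it never relied on smoothness or on the placement, as opposed to the size, of the support; the rest is routine bookkeeping.
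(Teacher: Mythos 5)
Your proposal is correct. Note first that the paper does not actually write out a proof of Lemma~\ref{lem:moment_int_c0}: it only remarks that the argument is ``similar (with some simplifications)'' to that of Lemma~\ref{lem:moment_int_c} and leaves it to the reader, the intended route presumably being to rerun that moment computation directly for a bounded, exponentially decaying (but not compactly supported) $\varphi$. You instead reduce to the previous lemma by a unit-scale partition of unity in $(x,t)$, stationarity of $V$, and summation of an exponentially decaying series in $n$ together with the $O(1+t)$ admissible time indices; this is a legitimate and arguably cleaner way of making ``similar with simplifications'' precise without repeating the computation. The one point that genuinely needs care is the one you flag: you must invoke the \emph{proof} rather than the statement of Lemma~\ref{lem:moment_int_c}, since the statement assumes $\varphi$ smooth and compactly supported and does not make explicit that the constant is linear in $\|\varphi\|_\infty$ (after the $p$-th root) and depends on the support only through its diameter, uniformly over translations. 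Inspection of that proof bears this out: with $\delta=1$ only the regimes $\eps<1$ and $\eps\ge 1$ occur, producing respectively the bounds $\eps^{\alpha/4}$ and $\eps^{-1/2-\alpha/4}$, which are exactly the relevant minima; the test function enters only through $\|\varphi\|_{C}$, $\|\varphi\|_{L^1}$, $\|\varphi\|_{L^\infty}$ and the support diameter $s_\varphi$ (all controlled for your unit pieces by $M e^{-c|n|}$ up to fixed factors); translations of the support are harmless by stationarity of $V$; and since the estimates there pass to $\bigl|\E\prod_i V\bigr|$ before integrating, restricting the integration domain (your truncation of the $s$-integral to $[0,t]$) can only decrease the bound --- a small point worth stating in that form, since the expectation itself need not have a sign. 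With those observations your bookkeeping closes the argument, and the resulting constant depends on $p$, $t$, the bounds on $\varphi$ and Assumption~\ref{ass:Phi}, as required.
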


\begin{proof}
The proof of this lemma is similar (with some simplifications) to that of the previous statement.
We leave it to the reader.
\end{proof}

\begin{lemma}\label{le:momentsY}
For each $p\ge1$, there exists a constant $C_p$ such that for all $\eps>0$, $t\ge0$, $x\in\R$,
\begin{align}\label{e:momentsY}
\left[\E\left(\left|Y^\eps(x,t)\right|^p\right)\right]^{1/p}&\le C_p(1+\sqrt t)\eps^{\alpha/4}\\
\label{e:momentsYderiv}
\left[\E\left(\left|\partial_x Y^\eps(x,t)\right|^p\right)\right]^{1/p}&\le C_p\\
\label{e:momentsYderiv2}
\left[\E\left(\left|\partial^2_x Y^\eps(x,t)\right|^p\right)\right]^{1/p}&\le C_p\eps^{-1}.
\end{align}
\end{lemma}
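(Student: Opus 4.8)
I would start from the Duhamel representation
\[
Y^\eps(x,t)=\int_0^t\!\!\int_\R p_{t-s}(x-y)\,V_\eps(y,s)\,dy\,ds\;,\qquad
p_\tau(z)=\frac{1}{\sqrt{4\pi\tau}}\,e^{-z^2/(4\tau)}\;,
\]
together with the analogous formulas for its spatial derivatives. For $\partial_x Y^\eps$ I keep the derivative on the kernel, but for $\partial_x^2 Y^\eps$ it is essential to integrate by parts once, moving one derivative onto the potential:
\[
\partial_x^2 Y^\eps(x,t)=\int_0^t\!\!\int_\R \partial_x p_{t-s}(x-y)\,\partial_x V_\eps(y,s)\,dy\,ds
=\eps^{-1}\int_0^t\!\!\int_\R \partial_x p_{t-s}(x-y)\,W_\eps(y,s)\,dy\,ds\;,
\]
where $W_\eps(x,t)=\eps^{-(1/2+\alpha/4)}(\partial_x V)(x/\eps,t/\eps^\alpha)$ is the rescaled field built from $\partial_x V$. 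Since $V$ is $\CC^1$ in $x$, the value $\partial_x V(x,t)$ is measurable with respect to the $\sigma$-algebra generated by $V$ on any neighbourhood of $(x,t)$, so $\partial_x V$ is again stationary and centred, has all moments by Assumption~\ref{ass:Phi}, and satisfies Assumption~\ref{ass:cor}; consequently Lemmas~\ref{lem:moment_int_c} and \ref{lem:moment_int_c0} apply verbatim with $W_\eps$ in place of $V_\eps$. In particular, \eref{e:momentsYderiv2} follows immediately from \eref{e:momentsYderiv} applied to $W_\eps$, and it remains only to prove \eref{e:momentsY} and \eref{e:momentsYderiv}.

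Both of these I would obtain from a parabolic Littlewood--Paley decomposition of the heat kernel. Fix smooth nonnegative partitions of unity $\sum_n \theta(2^n\tau)=1$ on $(0,\infty)$ with $\theta$ supported in $[\tfrac14,1]$, and $\sum_{m\in\mathbb Z}\lambda(\zeta-m)=1$ on $\R$ with $\lambda$ supported in $[-1,1]$, and set $\delta_n=2^{-n/2}$. Multiplying $p_\tau(z)$ and $\partial_x p_\tau(z)$ by the product of these two partitions of unity and using the parabolic scaling $p_{\mu^2\sigma}(\mu\zeta)=\mu^{-1}p_\sigma(\zeta)$ to rescale each resulting piece, one obtains, for every $\tau>0$, the exact expansions
\[
p_\tau(z)=\sum_{n,m}\delta_n^2\,\Xi^{(m)}_{\delta_n}(z,\tau)\;,\qquad
\partial_x p_\tau(z)=\sum_{n,m}\delta_n\,\Xi'^{(m)}_{\delta_n}(z,\tau)\;,
\]
where $\Xi_\delta(z,\tau)=\delta^{-3}\Xi(z/\delta,\tau/\delta^2)$ in the notation of Lemma~\ref{lem:moment_int_c}, and $\Xi^{(m)}(\zeta,\sigma)=p_\sigma(\zeta)\,\theta(\sigma)\,\lambda(\zeta-m)$, $\Xi'^{(m)}(\zeta,\sigma)=\bigl(\partial_\zeta p_\sigma(\zeta)\bigr)\theta(\sigma)\lambda(\zeta-m)$ are fixed smooth functions whose supports have diameter $O(1)$. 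Because of the Gaussian factor $p_\sigma(\zeta)$ restricted to $|\zeta-m|\le1$, the sup norms of $\Xi^{(m)}$ and $\Xi'^{(m)}$ (and of their derivatives) are $O(e^{-cm^2})$, so the constants produced by Lemma~\ref{lem:moment_int_c} when applied to these functions are summable over $m$. (The function $\Xi'^{(m)}$ is not sign-definite, but the proof of Lemma~\ref{lem:moment_int_c} uses only $\|\varphi\|_\infty$ and the support of $\varphi$, so nonnegativity plays no role there.)

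Inserting these expansions into the Duhamel formulas, applying Lemma~\ref{lem:moment_int_c} to each summand with $\delta=\delta_n$, and summing in $L^p(\Omega)$ via Minkowski's inequality ($p\ge1$), I obtain
\[
\bigl[\E|Y^\eps(x,t)|^p\bigr]^{1/p}\lesssim\sum_n\delta_n^2\Bigl(\eps^{-\frac12-\frac\alpha4}\wedge\delta_n^{-\frac12}\eps^{-\frac\alpha4}\wedge\delta_n^{-\frac32}\eps^{\frac\alpha4}\Bigr)\;,
\]
and the same bound with $\delta_n^2$ replaced by $\delta_n$ for $\partial_x Y^\eps$; in both cases the sum runs over the $n$ with $\delta_n\le2\sqrt t$, since $\Xi^{(m)}_{\delta_n}(z,\tau)$ is supported in $\tau\le\delta_n^2$. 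It then remains to evaluate these geometric sums. Assuming (as we may) that $\eps\le1$, one splits the range of $\delta_n$ at $\eps$ and at $\eps^{\alpha/2}$, so that the three terms in the minimum are successively the smallest; a dyadic sum is then controlled by its largest term in each subrange, and using $\alpha<2$ one finds the right-hand sides to be $\lesssim(1+\sqrt t)\,\eps^{\alpha/4}$ and $\lesssim1$ respectively (the borderline cases in which $\sqrt t$ is itself smaller than $\eps$ or $\eps^{\alpha/2}$ only improve the bounds). Together with the first paragraph this yields all three estimates.

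The main obstacle — and the reason the integration by parts above is unavoidable — is that if one instead decomposed $\partial_x^2 Y^\eps$ by placing two derivatives on the kernel, the prefactor $\delta_n$ would be replaced by $\delta_n^0=1$, so every small scale $\delta_n$ would contribute a fixed amount $\sim\eps^{-1/2-\alpha/4}$, and the sum over the infinitely many scales $\delta_n\to0$ would diverge. Trading the offending second derivative of $p$ for a derivative of $V$ — legitimate because $V\in\CC^1_x$, and harmless because $\partial_x V$ obeys the same hypotheses — cures this at the cost of a single power $\eps^{-1}$. The rest is the careful but routine bookkeeping of the geometric sums across the various regimes of $\delta_n$ relative to $\eps$, $\eps^{\alpha/2}$ and $\sqrt t$.
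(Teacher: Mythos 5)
Your proof is correct and follows essentially the same route as the paper: a dyadic parabolic decomposition of the heat kernel into compactly supported rescaled pieces (with prefactors $\delta_n^2$ for $p$ and $\delta_n$ for $\partial_x p$), Lemma~\ref{lem:moment_int_c} applied at each scale with $\delta=\delta_n$, geometric summation over the scales, and an integration by parts putting one derivative on $V_\eps$ (at the cost of $\eps^{-1}$) to get \eref{e:momentsYderiv2}. The only deviations are minor: you use a product partition (dyadic in time, unit-scale in space, with a Gaussian-summable index $m$) where the paper uses the single radial cutoff $\psi(\sqrt{x^2+t})$, and you handle the large scales through the support restriction $\delta_n\lesssim\sqrt t$ instead of lumping them into the kernel $p_t^-$ and invoking the non-compactly-supported variant Lemma~\ref{lem:moment_int_c0}, which even yields the slightly better factor $1+t^{1/4}$ in \eref{e:momentsY}.
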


\begin{proof}
Our main ingredient is the existence of a function $\psi\colon \R_+ \to [0,1]$ which is
smooth, compactly supported in the interval $[1/2,2]$, and such that
\begin{equ}
\sum_{n \in \Z} \psi(2^{-n}r) = 1\;,
\end{equ}
for all $r > 0$.

As a consequence, we can rewrite the heat kernel as
\begin{equ}[e:rewritePt]
p_t(x) = \sum_{n \in \Z} 2^{-2n}\phi_n(x,t)\;,
\end{equ}
where
\begin{equ}[e:defphin]
\phi_n(x,t) = 2^{3n} \phi(2^{n}x, 2^{2n} t)\;,\qquad
\phi(x,t) = p_t(x) \psi(\sqrt{x^2 + t})\;.
\end{equ}
The advantage of this formulation is that the function $\phi$ is smooth and
compactly supported. The reason why we scale $\phi_n$ in this way, at the expense of still
having a prefactor $2^{-2n}$ in \eref{e:rewritePt} is that this is the scaling used in
Lemma~\ref{lem:moment_int_c} (setting $\delta = 2^{-n}$).

We use this decomposition to define $Y^\eps_n$ by
\begin{equ}[e:decompY]
Y^\eps_n(x,t) = 2^{-2n} \int_0^t \int_\R \phi_n(x-y, t-s)\, V_\eps(y,s)\,dy\,ds\;,
\end{equ}
so that, by \eref{e:rewritePt}, one has $Y^\eps = \sum_n Y^\eps_n$.
Setting $\tilde \phi(x,t) = \d_x \phi(x,t)$ and defining $\tilde \phi_n(x,t)= 2^{3n} \tilde \phi(2^{n}x, 2^{2n} t)$ as in \eref{e:defphin},
the derivative of $Y^\eps$ can be decomposed in the same way:
\begin{equ}[e:decompDY]
\d_x Y^\eps_n(x,t) = 2^{-n} \int_0^t \int_\R \tilde \phi_n(x-y, t-s)\, V_\eps(y,s)\,dy\,ds\;.
\end{equ}

We first bound the derivative of $Y^\eps$. Since $\tilde \phi$ is smooth and compactly supported, the constants appearing in  Lemma~\ref{lem:moment_int_c} do not depend on $t$ and we have
\begin{equ}
\bigl(\E |\d_x Y^\eps_n(x,t)|^p\bigr)^{1/p} \lesssim 2^{n/2} \eps^{\alpha/4} \wedge 2^{-n/2} \eps^{-\alpha/4}= 2^{-|{n\over 2} + {\alpha \over 4}\log_2 \eps|}
\;.
\end{equ}
Since the sum (over $n$) of this quantity is bounded independently of $\eps$,
\eqref{e:momentsYderiv} now follows by the triangle inequality.

Note that \eqref{e:momentsYderiv2} follows from the same argument, if we integrate by parts (hence differentiate $V_\eps$).

In order to finally establish \eqref{e:momentsY}, we bound $Y^\eps$ in a similar way. This time however, we combine all the terms with $n < 0$ into one single term,
setting
\begin{equ}
p_t^{-}(x) = \sum_{n \le 0} 2^{-2n} \phi_n(x,t)\;,\qquad Y^\eps_-(x,t) = \int_0^t \int_\R p_{t-s}^-(x-y)\, V_\eps(y,s)\,dy\,ds\;,
\end{equ}
so that $Y^\eps = \sum_{n > 0} Y^\eps_n + Y^\eps_-$. Similarly to before, we obtain
\begin{equ}[e:boundYn]
\bigl(\E |Y^\eps_n(x,t)|^p\bigr)^{1/p} \lesssim 2^{-n/2} \eps^{\alpha/4}\;.
\end{equ}
In order to bound $Y^\eps_-$, we apply Lemma~\ref{lem:moment_int_c} with $\delta = 1$ and $\phi = p^-$.
It is immediate that $c_1(t) \lesssim \sqrt t$ and $c_3(t)\lesssim 1$, so that
\begin{equ}
\bigl(\E |Y^\eps_-(x,t)|^p\bigr)^{1/p} \lesssim \sqrt t\eps^{\alpha/4}\;.
\end{equ}
Combining this with \eref{e:boundYn}, summed over $n > 0$, yields the desired bound.
\end{proof}

We deduce from Lemma \ref{le:momentsY} and equation \ref{e:defCorrectors}
\begin{corollary}\label{co:Yto0} 
As $\eps\to0$, $Y^\eps(x,t)\to0$ in probability, locally uniformly with respect
to $x$ and $t$.
\end{corollary}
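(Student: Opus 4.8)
The plan is to upgrade the pointwise moment bounds of Lemma~\ref{le:momentsY} to the locally uniform convergence claimed in the corollary. The mechanism is the standard one: pointwise convergence in probability together with an equicontinuity estimate that is \emph{uniform in $\eps$}, combined via a Kolmogorov-type argument and a union bound over a fixed grid. Throughout, fix a compact set $K=[-N,N]\times[0,T]$; all constants below may depend on $K$ and on the (large) exponent $p$ but not on $\eps$.

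Pointwise convergence is immediate: \eqref{e:momentsY} gives $\bigl(\E|Y^\eps(x,t)|^p\bigr)^{1/p}\le C\eps^{\alpha/4}$ on $K$, so by Markov's inequality $Y^\eps(x,t)\to0$ in probability for each fixed $(x,t)$, with polynomial rate in $\eps$. In the spatial direction, writing $Y^\eps(x,t)-Y^\eps(x',t)=\int_{x'}^{x}\partial_x Y^\eps(z,t)\,dz$ and applying Minkowski's integral inequality together with \eqref{e:momentsYderiv} yields the $\eps$-uniform Lipschitz bound $\bigl(\E|Y^\eps(x,t)-Y^\eps(x',t)|^p\bigr)^{1/p}\le C|x-x'|$.

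The only genuinely delicate point is the time direction, and this is where equation~\eqref{e:defCorrectors} enters. Integrating it in time, for $0\le t'\le t$ one has
\[
Y^\eps(x,t)-Y^\eps(x,t')=\int_{t'}^{t}\partial^2_x Y^\eps(x,s)\,ds+\int_{t'}^{t}V_\eps(x,s)\,ds\;.
\]
Taking $L^p(\Omega)$-norms via Minkowski, bounding the first integrand by \eqref{e:momentsYderiv2} and the second using $\bigl(\E|V_\eps(x,s)|^p\bigr)^{1/p}=\eps^{-(1/2+\alpha/4)}\|V(0,0)\|_{L^p(\Omega)}$ (stationarity and Assumption~\ref{ass:Phi}), one obtains $\bigl(\E|Y^\eps(x,t)-Y^\eps(x,t')|^p\bigr)^{1/p}\le C\eps^{-1}(t-t')$ (using $\alpha<2$). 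On the other hand, the triangle inequality in $\Omega$ and \eqref{e:momentsY} give the crude, modulus-free estimate $\bigl(\E|Y^\eps(x,t)-Y^\eps(x,t')|^p\bigr)^{1/p}\le C\eps^{\alpha/4}$. Interpolating between these two bounds with weight $\theta_\star=\frac{\alpha}{\alpha+4}\in(0,\tfrac13)$ makes the powers of $\eps$ cancel exactly and produces the $\eps$-uniform Hölder-in-time estimate $\bigl(\E|Y^\eps(x,t)-Y^\eps(x,t')|^p\bigr)^{1/p}\le C|t-t'|^{\theta_\star}$. Combining with the spatial bound gives $\bigl(\E|Y^\eps(z)-Y^\eps(z')|^p\bigr)^{1/p}\le C|z-z'|^{\theta_\star}$ for all $z,z'\in K$.

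Since this holds for every $p$, Kolmogorov's continuity criterion applied to the $\R^2$-indexed field $(x,t)\mapsto Y^\eps(x,t)$ gives $\E\|Y^\eps\|_{\CC^{\gamma}(K)}^{p}\le C$ uniformly in $\eps$, for any $\gamma<\theta_\star$ and $p$ large enough. To conclude, fix $\delta,\eta>0$, cover $K$ by a grid $\Lambda_h$ of mesh $h$ (so $|\Lambda_h|\lesssim h^{-3}$), and use
\[
\sup_{z\in K}|Y^\eps(z)|\le\max_{z\in\Lambda_h}|Y^\eps(z)|+C\,h^{\gamma}\,\|Y^\eps\|_{\CC^{\gamma}(K)}\;.
\]
By the uniform Hölder moment bound and Markov, one first fixes $h$ small enough, independently of $\eps$, that the second term exceeds $\delta/2$ with probability $<\eta/2$; then, with $h$ frozen, \eqref{e:momentsY}, Markov and a union bound over the finitely many points of $\Lambda_h$ make the first term exceed $\delta/2$ with probability $<\eta/2$ as soon as $\eps$ is small. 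Hence $\mathbf P(\sup_K|Y^\eps|>\delta)<\eta$ for $\eps$ small, which is the assertion (here $Y^\eps$ is continuous, being the solution of the heat equation with continuous right-hand side and vanishing initial condition). I expect the only real obstacle to be the construction of the \emph{$\eps$-uniform} temporal modulus: every obvious time-regularity bound degenerates as $\eps\to0$ (the equation reads $\partial_t Y^\eps=\partial^2_x Y^\eps+V_\eps$ with both terms of size $\sim\eps^{-1}$), and the interpolation above — trading the $\eps$-small but modulus-free bound from \eqref{e:momentsY} against the $\eps^{-1}$-Lipschitz-in-time bound from \eqref{e:momentsYderiv2} — is what resolves it; alternatively one can reach the same modulus through the Duhamel formula and the smoothing bounds of Proposition~\ref{prop:Heat}. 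The rest is routine, modulo care about the order of quantifiers ($h$ before $\eps$) in the last step.
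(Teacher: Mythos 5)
Your proof is correct and follows essentially the same route as the paper: both use the moment bounds of Lemma~\ref{le:momentsY} together with the equation \eref{e:defCorrectors} to control time increments, interpolate between the $\eps$-small modulus-free bound and the $\eps$-divergent increment bound, and conclude via Kolmogorov's continuity argument. The only (cosmetic) difference is that the paper chooses the interpolation exponents so the resulting H\"older-type moment bound still carries a positive power of $\eps$, letting Kolmogorov's Lemma deliver smallness directly, whereas you cancel the $\eps$-powers exactly and recover smallness from the pointwise bound over a grid, fixing the mesh before sending $\eps\to0$.
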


\begin{proof}
It follows from  Lemma \ref{le:momentsY} and equation \ref{e:defCorrectors} that for some $a,b>0$ and all $p\ge1$,
all  bounded subsets $D\subset\R_+\times\R$,
\begin{align}\label{e:estimY}
\sup_{(x,t)\in D}&\E[|Y^\eps(x,t)|^p]\lesssim\eps^{pa},\\  \sup_{(x,t)\in D}\E[|\partial_xY^\eps(x,t)|^p]&\lesssim\eps^{-pb},\
 \sup_{(x,t)\in D}\E[|\partial_tY^\eps(x,t)|^p]\lesssim\eps^{-pb}. \label{e:estimdY}
 \end{align}
 We deduce from \eqref{e:estimY} that for all $(x,t), (y,s)\in D$, $p\ge1$,
 $$\E[|Y^\eps(x,t)-Y^\eps(y,s)|^p]\lesssim\eps^{pa},$$
 and from \eqref{e:estimdY}, writing $Y^\eps(x,t)-Y^\eps(y,s)$ as the sum of an integral of $\partial_xY^\eps$ and an integral of $\partial_tY^\eps$, we get
  $$\E[|Y^\eps(x,t)-Y^\eps(y,s)|^p]\lesssim (|x-y|+|t-s|)^p\eps^{-pb}.$$
  Hence from H\"older's inequality
  $$\E[|Y^\eps(x,t)-Y^\eps(y,s)|^{\alpha+\beta}]\le(|x-y|+|t-s|)^\beta \eps^{\alpha a-\beta b}.$$
  Provided $\beta>2$ and $\alpha>\beta b/a$, we obtain an estimate which allows us to deduce the result from a combination of \eqref{e:estimY} and Kolmogorov's Lemma.
\end{proof}

We will also need
\begin{lemma}\label{le:bar(c)}
The function $t\to\bar V_\eps(t)$ is continuous, and, for each $\eps>0$,
there exists a positive constant $\bar V^0_\eps$
such that
$$\bar V_\eps(t)\to\bar V_\eps^0,\quad\text{as }t\to\infty.
$$
Furthermore,
$$
\lim\limits_{\eps\to 0}\bar V_\eps^0=\bar V:=
\int\limits_0^\infty \int\limits_{\mathbb R} \frac{\Phi(y,t)}{2\sqrt{\pi t}}\,dy\,dt,
$$
and
$\bar V_\eps(t)\to\bar V$ as $\eps\to 0$,
uniformly in $t\in[1,+\infty]$.
\end{lemma}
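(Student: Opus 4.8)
The plan is to derive one explicit integral representation of $\bar V_\eps(t)$ and then read all four assertions off it. Since $V_\eps$ is stationary in space, $\bar V_\eps(t)=\E\bigl(\partial_x Y^\eps(0,t)\bigr)^2$ does not depend on the base point; writing $p_r$ for the heat kernel and $p_r',p_r''$ for its spatial derivatives, the mild formulation gives $\partial_x Y^\eps(0,t)=\int_0^t\!\int_\R p_{t-s}'(-y)\,V_\eps(y,s)\,dy\,ds$. Squaring, applying Fubini (legitimate because $|\E(V_\eps V_\eps)|\le\eps^{-(1+\alpha/2)}\|\Phi\|_\infty$ and $\int_0^t\|p_r'\|_{L^1}\,dr\lesssim\sqrt t$), using $\E\bigl(V_\eps(y,s)V_\eps(y',s')\bigr)=\eps^{-(1+\alpha/2)}\Phi\bigl(\tfrac{y-y'}{\eps},\tfrac{s-s'}{\eps^\alpha}\bigr)$, the oddness of $p_r'$, and then the changes of variables $\sigma=t-s$, $\sigma'=t-s'$, $y'=y-\eps w$ together with the Fourier identity $\int_\R p_\sigma'(y)p_{\sigma'}'(y-z)\,dy=-p_{\sigma+\sigma'}''(z)$, one is led to
\begin{equ}
\bar V_\eps(t)=\int_{[0,t]^2}G_\eps(\sigma,\sigma')\,d\sigma\,d\sigma'\;,\qquad
G_\eps(\sigma,\sigma')=-\eps^{-\alpha/2}\int_\R\Phi\Bigl(w,\tfrac{\sigma'-\sigma}{\eps^\alpha}\Bigr)\,p_{\sigma+\sigma'}''(\eps w)\,dw\;.
\end{equ}

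The key estimate is an $L^1$ bound on $G_\eps$, uniform in $\eps$. From Definition~\ref{def:cor}, Assumption~\ref{ass:cor} and $\rho(a+b)\le\sqrt{\rho(a)\rho(b)}$ one obtains a bounded function $\bar\rho$ of faster-than-polynomial decay with $\|\Phi(\cdot,s)\|_\infty+\|\Phi(\cdot,s)\|_{L^1}\le\bar\rho(|s|)$; combining this with $\|p_u''\|_\infty\lesssim u^{-3/2}$ and $\|p_u''\|_{L^1}\lesssim u^{-1}$ gives
\begin{equ}
|G_\eps(\sigma,\sigma')|\lesssim\eps^{-\alpha/2}\,\bar\rho\Bigl(\tfrac{|\sigma'-\sigma|}{\eps^\alpha}\Bigr)\,\min\Bigl(\eps^{-1}(\sigma+\sigma')^{-1},\ (\sigma+\sigma')^{-3/2}\Bigr)\;.
\end{equ}
Passing to $u=\sigma+\sigma'$, $\tau=\sigma'-\sigma$, so that $|\tau|\le u$ and hence $\int_{-u}^u\bar\rho(|\tau|/\eps^\alpha)\,d\tau\lesssim\min(u,\eps^\alpha)$, and splitting the $u$–integral at $u=\eps^2$ and $u=\eps^\alpha$ (note $\eps^2\le\eps^\alpha$ since $\alpha<2$), an elementary computation gives $\int_{[0,\infty)^2}|G_\eps|\lesssim 1$ for $\eps\in(0,1]$. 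Since $G_\eps\in L^1([0,\infty)^2)$, continuity of $t\mapsto\bar V_\eps(t)$ and the existence of $\bar V^0_\eps:=\lim_{t\to\infty}\bar V_\eps(t)=\int_{[0,\infty)^2}G_\eps$ are immediate by dominated convergence, and $\bar V^0_\eps\ge0$ as a limit of second moments (its value being the one computed below). Finally, for $t\ge1$ the set $E_t:=[0,\infty)^2\setminus[0,t]^2$ is contained in $\{\sigma+\sigma'>1\}$, where only the factor $(\sigma+\sigma')^{-3/2}$ is active, so
\begin{equ}
\sup_{t\ge1}\bigl|\bar V_\eps(t)-\bar V^0_\eps\bigr|\le\int_{E_1}|G_\eps|\lesssim\eps^{-\alpha/2}\,\eps^\alpha\int_1^\infty u^{-3/2}\,du\lesssim\eps^{\alpha/2}\;.
\end{equ}

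To compute $\bar V:=\lim_{\eps\to0}\bar V^0_\eps$, I would pass in $\bar V^0_\eps=\int_{[0,\infty)^2}G_\eps$ to the variables $(u,\tau)=(\sigma+\sigma',\sigma'-\sigma)$, then set $\tau=\eps^\alpha v$, and use that the heat kernel solves $\partial_u p_u=p_u''$, so that $\int_a^\infty p_u''(z)\,du=-p_a(z)$ for $a>0$ (the single point $z=0,v=0$ being of zero measure). This collapses the $u$–integral and leaves
\begin{equ}
\bar V^0_\eps=\frac1{4\sqrt\pi}\int_\R\int_\R\frac{\Phi(w,v)}{\sqrt{|v|}}\,\exp\Bigl(-\frac{\eps^{2-\alpha}w^2}{4|v|}\Bigr)\,dw\,dv\;.
\end{equ}
Because $\alpha<2$, $\eps^{2-\alpha}\to0$, so the exponential tends to $1$; dominated convergence with the integrable majorant $|\Phi(w,v)|/\sqrt{|v|}$ (its integral being $\le\int_\R\bar\rho(|v|)|v|^{-1/2}\,dv<\infty$), together with the evenness of $\overline\Phi$, then gives
\begin{equ}
\lim_{\eps\to0}\bar V^0_\eps=\frac1{4\sqrt\pi}\int_\R\frac{\overline\Phi(v)}{\sqrt{|v|}}\,dv=\frac1{2\sqrt\pi}\int_0^\infty\frac{\overline\Phi(v)}{\sqrt v}\,dv=\bar V\;,
\end{equ}
which is \eref{e:defVbar}. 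Combining this with the bound of the previous paragraph, $\sup_{t\in[1,+\infty]}|\bar V_\eps(t)-\bar V|\le\sup_{t\ge1}|\bar V_\eps(t)-\bar V^0_\eps|+|\bar V^0_\eps-\bar V|\lesssim\eps^{\alpha/2}+|\bar V^0_\eps-\bar V|\to0$ as $\eps\to0$, the value at $t=+\infty$ being read as $\bar V^0_\eps$; this is the last assertion.

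The only delicate point should be the uniform $L^1$ bound on $G_\eps$: a crude treatment of the spatial convolution leaves a non-integrable singularity of $p_{\sigma+\sigma'}''$ near $\sigma=\sigma'=0$, and it is essential that this is compensated by the shrinking (as $\eps\to0$) of the $\tau$–support of $\Phi(\cdot,\tau/\eps^\alpha)$ coupled with the constraint $|\tau|\le\sigma+\sigma'$. The remaining ingredients—the Fubini applications, the evaluation of $\int_\R p_\sigma'(y)p_{\sigma'}'(y-z)\,dy$ by the Fourier transform, and the tracking of the powers of $\eps$—are routine.
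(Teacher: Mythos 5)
Your proof is correct, but it follows a genuinely different route from the paper. Starting from the same four-fold integral $\bar V_\eps(t)=\int_0^t\int_0^t\int_\R\int_\R p_s'(y)p_r'(z)\Phi_\eps(y-z,s-r)\,dy\,dz\,ds\,dr$, the paper rescales to microscopic variables, splits the domain into the regions $\Pi_1,\dots,\Pi_4$, replaces the factor $yz\,e^{-y^2/4s-z^2/4r}$ by $z^2e^{-z^2/4s-z^2/4r}$, and controls the remainders $r_\eps(t)$, $R_\eps(t)$ uniformly in $t$ by hand. You instead integrate out the spatial convolution exactly via $p_\sigma'*p_{\sigma'}'=p_{\sigma+\sigma'}''$, which reduces everything to the two-variable kernel $G_\eps(\sigma,\sigma')=-\eps^{-\alpha/2}\int_\R\Phi(w,(\sigma'-\sigma)/\eps^\alpha)p_{\sigma+\sigma'}''(\eps w)\,dw$; the uniform $L^1$ bound $|G_\eps|\lesssim\eps^{-\alpha/2}\bar\rho(|\tau|/\eps^\alpha)\min(\eps^{-1}u^{-1},u^{-3/2})$ (which I checked: the split at $u=\eps^2$ and $u=\eps^\alpha$ gives $\eps^{1-\alpha/2}+1+1\lesssim1$, using $\alpha<2$) then yields continuity, the existence of $\bar V_\eps^0$, and the tail estimate $\sup_{t\ge1}|\bar V_\eps(t)-\bar V_\eps^0|\lesssim\eps^{\alpha/2}$ by dominated convergence, while the identity $\int_a^\infty p_u''\,du=-p_a$ (justified by the absolute convergence you established) gives the exact closed form $\bar V_\eps^0=\frac1{4\sqrt\pi}\int_\R\int_\R\frac{\Phi(w,v)}{\sqrt{|v|}}e^{-\eps^{2-\alpha}w^2/(4|v|)}\,dw\,dv$, from which the limit $\bar V$ drops out immediately; your changes of variables and constants do reproduce \eref{e:defVbar}. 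What your approach buys is a shorter and more transparent argument with an explicit rate for the $t$-tail and an exact formula for $\bar V_\eps^0$; what it does not give (and the paper's proof does not really give either) is strict positivity of $\bar V_\eps^0$ — you only get nonnegativity as a limit of second moments — and you should say a word on why $\bar V_\eps(t)=\E|\partial_xY^\eps(x,t)|^2$ may be written as the four-fold integral (differentiation under the integral plus Fubini–Tonelli, exactly as the paper does implicitly), but these are minor points.
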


\begin{proof}
Writing $\Phi_\eps$ for the correlation function of $V_\eps$
and using the definition of $\bar V_\eps(t)$, we have
\begin{equs}
\bar V_\eps(t) &=
\E\bigg[\bigg(\frac{\partial}{\partial x}\int\limits_0^t
\int\limits_{\mathbb R}p_{t-s}(x-y) V_\eps(y,s)\,dy\,ds\bigg)^2 \bigg]\\
&=\E\bigg[\bigg(\int\limits_0^t
\int\limits_{\mathbb R}p_{t-s}'(x-y) V_\eps(y,s)\,dy\,ds\bigg)^2 \bigg]\\
&=\E\bigg[\int\limits_0^t\!\!\!\int\limits_0^t\!\!\!
\int\limits_{\mathbb R}\!\!\!\int\limits_{\mathbb R}
p_{t-s}'(x-y)p_{t-r}'(x-z)
V_\eps(y,s) V_\eps(z,r)\,dy\,dz\,ds\,dr
\bigg]\\
&=\int\limits_0^t\!\!\!\int\limits_0^t\!\!\!
\int\limits_{\mathbb R}\!\!\!\int\limits_{\mathbb R}
p_{t-s}'(x-y)p_{t-r}'(x-z)
\Phi_\eps(y-z,s-r)\,dy\,dz\,ds\,dr\\
&=\int\limits_0^t\!\!\!\int\limits_0^t\!\!\!
\int\limits_{\mathbb R}\!\!\!\int\limits_{\mathbb R}
p_{s}'(y)p_{r}'(z)
\Phi_\eps(y-z,s-r)\,dy\,dz\,ds\,dr\;.
\end{equs}
It is easy to check that, for each $\eps>0$, this integral is a continuous function of $t$ and that it converges, as $t\to+\infty$.
Performing the change of variables $y'=\frac{y}{\eps^{1/2+\alpha/4}},\, z'=\frac{z}{\eps^{1/2+\alpha/4}},\,s'=\frac{s}{\eps^{1+\alpha/2}},\,
r'=\frac{r}{\eps^{1+\alpha/2}}$, renaming the new variables and setting $T_\eps=\eps^{-1-\alpha/2}t$, we obtain
$$
\bar V_\eps(t) =\frac{1}{16\pi} \int\limits_0^{T_\eps}\!\! \int\limits_0^{T_\eps}\!\!
\int\limits_{\mathbb R}\!\!\!\int\limits_{\mathbb R} \frac{y}{s^{3/2}} \frac{z}{r^{3/2}} e\big.^{-\frac{y^2}{4s}-\frac{z^2}{4r}}
\Phi\Big(\frac{y\!-\!z}{\eps^{\frac{1}{2}-\frac{\alpha}{4}}}, \frac{s\!-\!r}{\eps^{\frac{\alpha}{2}-1}}\Big)
\,dy\,dz\,ds\,dr.
$$
We represent the integral on the right-hand side as
\begin{equation}\label{repr_main}
\bar V_\eps(t)=
\frac{1}{16\pi} \int\limits_0^{T_\eps}\!\! \int\limits_0^{T_\eps}\!\!
\int\limits_{\mathbb R}\!\!\!\int\limits_{\mathbb R} \frac{z^2}{s^{3/2} r^{3/2}} e\big.^{-\frac{z^2}{4s}-\frac{z^2}{4r}}
\Phi\Big(\frac{y\!-\!z}{\eps^{\frac{1}{2}-\frac{\alpha}{4}}}, \frac{s\!-\!r}{\eps^{\frac{\alpha}{2}-1}}\Big)
\,dy\,dz\,ds\,dr + r_\eps(t).
\end{equation}
The further analysis relies on the following limit relation:
\begin{equation}\label{est_cruci}
\lim\limits_{\eps\to0}\ \sup_{0<t\le+\infty}\,|r_\eps(t)|=0.
\end{equation}
In order to justify it we denote $\varkappa=\frac{1}{2}-\frac{\alpha}{4}$ and $\varkappa_1=\frac{\varkappa}{10}$, and divide the integration area into four parts as follows
\begin{equs}
\Pi_1&=\{(y,z,s,r)\in\mathbb R^2\times(\mathbb R^+)^2\,:\,s\le \eps^{\varkappa_1},\,r\le \eps^{\varkappa_1}\},\\
\Pi_2&=\{(y,z,s,r)\in\mathbb R^2\times(\mathbb R^+)^2\,:\, \eps^{\varkappa_1}<s\le T_\eps ,\,r\le \eps^{\varkappa_1}\},\\
\Pi_3&=\{(y,z,s,r)\in\mathbb R^2\times(\mathbb R^+)^2\,:\,s\le \eps^{\varkappa_1},\, \eps^{\varkappa_1}<r\le T_\eps\},\\
\Pi_4&=\{(y,z,s,r)\in\mathbb R^2\times(\mathbb R^+)^2\,:\,\eps^{\varkappa_1}<s\le T_\eps,\,\eps^{\varkappa_1}<r\le T_\eps\}.
\end{equs}
In $\Pi_1$ we have
\begin{equation}\label{pi-1}
\int\limits_{\Pi_1}\frac{|y|\,|z|}{s^\frac{3}{2}r^\frac{3}{2}}
e\big.^{-\frac{y^2}{4s}-\frac{z^2}{4r}}
\Phi\Big(\frac{y\!-\!z}{\eps^{\frac{1}{2}-\frac{\alpha}{4}}}, \frac{s\!-\!r}{\eps^{\frac{\alpha}{2}-1}}\Big)\,dx\,dy\,ds\,dr\le C^2\int\limits_0^{\eps^{\varkappa_1}}\int\limits_0^{\eps^{\varkappa_1}}
\,\frac{dsdr}{s^\frac{1}{2}r^\frac{1}{2}}=4C^2\eps^{\varkappa_1}.
\end{equation}
To estimate the integral over $\Pi_2$ we first notice that there exists a constant
$C_1$ such that
$$
\frac{|y|}{s^\frac{1}{2}}e^{-\frac{y^2}{4s}}\le C_1
$$
uniformly over ass $s > 0$ and $y\in \R$.
Then,
\begin{equation}\label{pi-2}
\begin{array}{lll}
\displaystyle
\int\limits_{\Pi_2}\!\!\!&
\displaystyle
\!\frac{|y|\,|z|}{s^\frac{3}{2}r^\frac{3}{2}}
e\big.^{-\frac{y^2}{4s}-\frac{z^2}{4r}}
\Phi\Big(\frac{y\!-\!z}{\eps^{\frac{1}{2}-\frac{\alpha}{4}}}, \frac{s\!-\!r}{\eps^{\frac{\alpha}{2}-1}}\Big)\,dy\,dz\,ds\,dr\\[3mm]
& \displaystyle \le C_1\int\limits^{T_\eps}_{\eps^{\varkappa_1}}\int\limits_0^{\eps^{\varkappa_1}}
\int\limits_{\mathbb R}
\,\frac{|z|\,dz\,dr\,ds}{s\,r^\frac{3}{2}}e^{-\frac{z^2}{r}}\int\limits_{\mathbb R}\Phi\Big(\frac{y\!-\!z}{\eps^{\frac{1}{2}-\frac{\alpha}{4}}}, \frac{s\!-\!r}{\eps^{\frac{\alpha}{2}-1}}\Big)\,dy\\[7mm]
& \displaystyle  =C_1\eps^\varkappa
\int\limits^{T_\eps}_{\eps^{\varkappa_1}}
\int\limits_0^{\eps^{\varkappa_1}}
\int\limits_{\mathbb R}
\,e^{-\frac{z^2}{r}}\,\overline\Phi\Big( \frac{s\!-\!r}{\eps^{\frac{\alpha}{2}-1}}\Big)\frac{|z|\,dz\,dr\,ds}{s\,r^\frac{3}{2}}
\\[7mm]
& \displaystyle
=CC_1\eps^\varkappa
\int\limits^{T_\eps}_{\eps^{\varkappa_1}}
\int\limits_0^{\eps^{\varkappa_1}}
\,\overline\Phi\Big( \frac{s\!-\!r}{\eps^{\frac{\alpha}{2}-1}}\Big)\frac{dr\,ds}{s\,r^\frac{1}{2}}
\leq
CC_1\eps^\varkappa
\int\limits^{T_\eps}_{\eps^{\varkappa_1}}
\int\limits_0^{\eps^{\varkappa_1}}
\,\widehat{\overline\Phi}\Big( \frac{s}{\eps^{\frac{\alpha}{2}-1}}\Big)\frac{dr\,ds}{s\,r^\frac{1}{2}}
\\[7mm]
& \displaystyle
=2CC_1\eps^\varkappa\eps^{\frac{\varkappa_1}{2}}
\int\limits^{T_\eps}_{\eps^{\varkappa_1}}
\,\widehat{\overline\Phi}\Big( \frac{s}{\eps^{\frac{\alpha}{2}-1}}\Big)\frac{ds}{s}
\le
2CC_1\eps^\varkappa\eps^{\frac{\varkappa_1}{2}}
\int\limits^{\infty}_{\eps^{\varkappa_1+2\varkappa}}
\,\widehat{\overline\Phi}(s)\frac{ds}{s}
\\[7mm]
& \displaystyle
\le C_2 (\varkappa_1+2\varkappa)\eps^{\varkappa+\frac{\varkappa_1}{2}} |\log\eps|;
\end{array}
\end{equation}
here $\overline\Phi(t)=\int_{\mathbb R}\Phi(x,t)dx$,
and $\widehat{\overline\Phi}(t)$ stands for
$\max\{\overline{\Phi}(s)\,:\,t-1\leq s\leq t\}$.
A similar estimate holds true for the integral over $\Pi_3$. Therefore,
\begin{equation}\label{y-est1}
\lim\limits_{\eps\to0}\ \sup\limits_{0<t\le+\infty} \Big|\int\limits_{\Pi_1\cup\Pi_2\cup\Pi_3}\frac{y}{s^{3/2}} \frac{z}{r^{3/2}} e\big.^{-\frac{y^2}{4s}-\frac{z^2}{4r}}
\Phi\Big(\frac{y\!-\!z}{\eps^{\frac{1}{2}-\frac{\alpha}{4}}}, \frac{s\!-\!r}{\eps^{\frac{\alpha}{2}-1}}\Big)
\,dy\,dz\,ds\,dr\Big|=0.
\end{equation}

We also have
\begin{equation}\label{y-est2}
\begin{array}{ll}
\displaystyle
\int\limits_{\Pi_1\cup\Pi_2}\frac{z^2}{s^{3/2} r^{3/2}} e\big.^{-\frac{z^2}{4s}-\frac{z^2}{4r}}
\Phi\Big(\frac{y\!-\!z}{\eps^{\frac{1}{2}-\frac{\alpha}{4}}}, \frac{s\!-\!r}{\eps^{\frac{\alpha}{2}-1}}\Big)
\,dy\,dz\,ds\,dr \\[7mm]
\displaystyle
=
C\eps^\varkappa\int\limits_{0}^{T_\eps}\int\limits_0^{\eps^{\varkappa_1}}
\overline\Phi\Big(\frac{s-r}{\eps^{-2\varkappa}}\Big)\frac{ds\,dr}{(s+r)^\frac{3}{2}}
=C\int\limits_{0}^{\eps^{2\varkappa}T_\eps} \int\limits_0^{\eps^{\varkappa_1+2\varkappa}}
\overline\Phi(s-r)\frac{ds\,dr}{(s+r)^\frac{3}{2}}
\\[7mm]
\displaystyle
\le C\int\limits_{0}^1 \int\limits_0^{\eps^{\varkappa_1+2\varkappa}}
\overline\Phi(s-r)\frac{ds\,dr}{(s+r)^\frac{3}{2}} +
C\int\limits_1^\infty \int\limits_0^{\eps^{\varkappa_1+2\varkappa}}
 \overline\Phi(s-r)\frac{ds\,dr}{(s+r)^\frac{3}{2}}
\le C_4\eps^\varkappa
\end{array}
\end{equation}
Combining this estimate with a similar estimate for the integral over $\Pi_1\cup\Pi_3$, we obtain
\begin{equation}\label{y-est3}
\lim\limits_{\eps\to0}\ \sup\limits_{0<t\le+\infty} \Big|\int\limits_{\Pi_1\cup\Pi_2\cup\Pi_3}\frac{z^2}{s^{3/2} r^{3/2}} e\big.^{-\frac{z^2}{4s}-\frac{z^2}{4r}}
\Phi\Big(\frac{y\!-\!z}{\eps^{\frac{1}{2}-\frac{\alpha}{4}}}, \frac{s\!-\!r}{\eps^{\frac{\alpha}{2}-1}}\Big)
\,dy\,dz\,ds\,dr\Big|=0.
\end{equation}
In order to justify (\ref{est_cruci}) it remains to show that
\begin{equation}\label{pi-4}
\lim\limits_{\eps\to0}\ \sup\limits_{0<t\le+\infty}
\int\limits_{\Pi_4}\frac{\big|{yz} e\big.^{-\frac{y^2}{4s}-\frac{z^2}{4r}}
-{z^2}e\big.^{-\frac{z^2}{4s}-\frac{z^2}{4r}}\big|}{s^{3/2} r^{3/2}}
\Phi\Big(\frac{y\!-\!z}{\eps^{\frac{1}{2}-\frac{\alpha}{4}}}, \frac{s\!-\!r}{\eps^{\frac{\alpha}{2}-1}}\Big)
\,dy\,dz\,ds\,dr\Big|=0
\end{equation}
We first estimate
\begin{equs}
J_\eps(t)&:=\int\limits_{\Pi_4}\frac{|yz|}{s^{3/2} r^{3/2}}
e\big.^{-\frac{z^2}{4r}}
\big|e\big.^{-\frac{y^2}{4s}}
-e\big.^{-\frac{z^2}{4s}}\big|
\Phi\Big(\frac{y\!-\!z}{\eps^{\varkappa}}, \frac{s\!-\!r}{\eps^{-2\varkappa}}\Big)
\,dy\,dz\,ds\,dr\\
&\le \frac{1}{4}\int\limits_{\Pi_4}\frac{|yz|\,|z^2-y^2|}{s^{5/2} r^{3/2}}
e\big.^{-\frac{z^2}{4r}}
\Big(e\big.^{-\frac{y^2}{4s}}
+e\big.^{-\frac{z^2}{4s}}\Big)
\Phi\Big(\frac{y\!-\!z}{\eps^{\varkappa}}, \frac{s\!-\!r}{\eps^{-2\varkappa}}\Big)
\,dy\,dz\,ds\,dr \label{y-est5}\\
&\lesssim \eps^\varkappa\int\limits_{\Pi_4} \Big(\frac{|y|^3+|y\!-\!z|^3}{s^{5/2} r^{3/2}}e\big.^{-\frac{y^2}{4s}}
\!+\!
\frac{|z|^3+|y\!-\!z|^3}{s^{5/2} r^{3/2}}e\big.^{-\frac{z^2}{4s}}
\Big)e\big.^{-\frac{z^2}{4r}}
\Phi_1\Big(\frac{y\!-\!z}{\eps^{\varkappa}}, \frac{s\!-\!r}{\eps^{-2\varkappa}}\Big)
\,dy\,dz\,ds\,dr
\end{equs}
with $\Phi_1(x,t)=|x|\Phi(x,t)$; here we have used the inequality $|e^a-e^b|\le |b-a|(e^a+e^b)$ and
the estimates $|yz||y+z|\leq C(|y|^3+|y-z|^3)$ and $|yz||y+z|\leq C(|z|^3+|y-z|^3)$ that follow from the Young inequality. Let us estimate the integral
$$
\eps^\varkappa\int\limits_{\Pi_4} \frac{|y|^3}{s^{5/2} r^{3/2}}e\big.^{-\frac{y^2}{4s}}
e\big.^{-\frac{z^2}{4r}}
\Phi_1\Big(\frac{y\!-\!z}{\eps^{\varkappa}}, \frac{s\!-\!r}{\eps^{-2\varkappa}}\Big)
\,dy\,dz\,ds\,dr
$$
$$
\le C_3\eps^\varkappa\int\limits_{\Pi_4} \frac{1}{s r^{3/2}}
e\big.^{-\frac{z^2}{4r}}
\Phi_1\Big(\frac{y\!-\!z}{\eps^{\varkappa}}, \frac{s\!-\!r}{\eps^{-2\varkappa}}\Big)
\,dy\,dz\,ds\,dr\le
C_4\eps^{2\varkappa}\int\limits_{\eps^{\varkappa_1}}^\infty
\int\limits_{\eps^{\varkappa_1}}^\infty
\frac{1}{s r}
\overline\Phi_1\Big(\frac{s\!-\!r}{\eps^{-2\varkappa}}\Big)
\,ds\,dr
$$
$$
=C_4\eps^{2\varkappa}\int\limits_{\eps^{\varkappa_1+2\varkappa}}^\infty
\int\limits_{\eps^{\varkappa_1+2\varkappa}}^\infty
\overline\Phi_1\big(s\!-\!r\big)\frac{ds\,dr}{s r}\,
\le C_5 \eps^{2\varkappa}(\log\eps)^2;
$$
here $C_3=\max(x^3e^{-x^2})$, and $\overline\Phi_1(t)$ stands for $\int_{\mathbb R}\Phi_1(x,t)dx$. Other terms on the right-hand side of (\ref{y-est5}) can be estimated in a similar way. Thus we obtain
\begin{equation}\label{y-est6}
\lim\limits_{\eps\to0}\ \sup\limits_{0<t\le\infty}\,J_\eps(t)=0.
\end{equation}
The inequality
$$
\int\limits_{\Pi_4}\frac{|yz-z^2|}{s^{3/2} r^{3/2}}
e\big.^{-\frac{z^2}{4r}}
e\big.^{-\frac{z^2}{4s}}
\Phi\Big(\frac{y\!-\!z}{\eps^{\varkappa}}, \frac{s\!-\!r}{\eps^{-2\varkappa}}\Big)\,dy\,dz\,ds\,dr\le C\eps^\varkappa(\log\eps)^2
$$
can be obtained in the same way with a number of
simplifications.
This yields (\ref{est_cruci}).

It remains to notice that
\begin{equs}
\int\limits_{\eps^{\varkappa_1}}^{T_\eps}\!\! \int\limits_{\eps^{\varkappa_1}}^{T_\eps}\!&\!
\int\limits_{\mathbb R}\!\!\!\int\limits_{\mathbb R} \frac{z^2}{s^{3/2} r^{3/2}} e\big.^{-\frac{z^2}{4s}-\frac{z^2}{4r}}
\Phi\Big(\frac{y\!-\!z}{\eps^{\varkappa}}, \frac{s\!-\!r}{\eps^{-2\varkappa}}\Big)
\,dy\,dz\,ds\,dr\\
&=
\eps^\varkappa\int\limits_{\eps^{\varkappa_1}}^{T_\eps}\!\! \int\limits_{\eps^{\varkappa_1}}^{T_\eps}\!\!
\int\limits_{\mathbb R} \frac{z^2}{s^{3/2} r^{3/2}} e\big.^{-\frac{z^2}{4s}-\frac{z^2}{4r}}
\overline\Phi\Big(\frac{s\!-\!r}{\eps^{-2\varkappa}}\Big)
\,dz\,ds\,dr\\
&= C_0
\eps^\varkappa\int\limits_{\eps^{\varkappa_1}}^{T_\eps}\!\! \int\limits_{\eps^{\varkappa_1}}^{T_\eps}\overline\Phi\Big(\frac{s\!-\!r}{\eps^{-2\varkappa}}\Big) \frac{ds\,dr}{(s+r)^{3/2}}
=C_0\int\limits_{\eps^{\varkappa_1+2\varkappa}}^{\eps^{-\alpha}t} \int\limits_{\eps^{\varkappa_1+2\varkappa}}^{\eps^{-\alpha}t}
\overline\Phi(s\!-\!r)\frac{ds\,dr}{(s+r)^{3/2}} \\
&=C_0\int\limits_{0}^{\infty} \int\limits_{0}^{\infty}
\overline\Phi(s\!-\!r)\frac{ds\,dr}{(s+r)^{3/2}}
+R_\eps(t)
\end{equs}
with $C_0=\int_{\mathbb R}z^2e^{-z^2/4}\,dz$, and
$$
\lim\limits_{\eps\to 0}\sup\limits_{1\le t\le+\infty}|R_\eps(t)|=0.
$$
Combining the last two relations with
(\ref{repr_main}) and (\ref{est_cruci}), we obtain the desired statement.
\end{proof}

\begin{lemma}\label{le:holder_estimY}
For any $T>0$, any even integer $k\ge 2$, any $0<\beta<1/k$, any $p>k$ and any $\kappa > 0$, there exists a  constant $C$ such that
for all $0\le t\le T$, $\eps>0$,
\begin{align*}
\left(\E\|Y^\eps(t)\|_{0,p_\kappa}^p\right)^{1/p}\le C\
\eps^{\frac{\alpha}{4}(1-\kappa)}\;&,\quad
\left(\E\|\d_xY^\eps(t)\|_{0,p_\kappa}^p\right)^{1/p}\le C\
\eps^{-\kappa},\;\\
\bigl(\E \|\d_x Y^\eps(t)\|_{\beta,p_\kappa}^p\bigr)^{1/p} &\le C \eps^{-\kappa}\;.
\end{align*}
\end{lemma}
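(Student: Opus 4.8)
The plan is to reduce everything to a fixed time $t\in[0,T]$ and to re-use the dyadic decomposition from the proof of Lemma~\ref{le:momentsY}. Recall that $Y^\eps=\sum_{n>0}Y^\eps_n+Y^\eps_-$ and $\partial_x Y^\eps=\sum_{n\in\Z}\partial_x Y^\eps_n$, where, up to an explicit power of $2^n$ (namely $2^{-2n}$, resp.\ $2^{-n}$), each dyadic piece has the form $\int_0^t\int_\R\varphi_{2^{-n}}(x-y,t-s)\,V_\eps(y,s)\,dy\,ds$ with $\varphi\in\{\phi,\tilde\phi\}$ smooth and compactly supported as in \eref{e:defphin}, while $Y^\eps_-$ is of the same type with $\varphi$ replaced by a kernel with exponential spatial decay. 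Each \emph{spatial increment} $Y^\eps_n(x,t)-Y^\eps_n(x',t)$ (resp.\ $\partial_x Y^\eps_n(x,t)-\partial_x Y^\eps_n(x',t)$) can likewise be written as $\int_{x'}^x$ of an expression of exactly the same type, now with $\varphi=\partial_1\phi$ (resp.\ $\varphi=\partial_1^2\phi$), again smooth and compactly supported, at the cost of one more factor $2^n$. Feeding all of these into Lemma~\ref{lem:moment_int_c} (whose proof applies verbatim to signed $\varphi$), and Lemma~\ref{lem:moment_int_c0} for the block $Y^\eps_-$, with $\delta=2^{-n}$, yields for \emph{every} $q\ge1$ and uniformly in $x$ an $L^q(\Omega)$-bound for the pointwise value of each dyadic piece and for each spatial increment, of the shape $2^{-cn}$ times $\eps^{-1/2-\alpha/4}\wedge 2^{n/2}\eps^{-\alpha/4}\wedge 2^{3n/2}\eps^{\alpha/4}$, with an appropriate constant $c$ in each case.

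Next I would upgrade these pointwise and increment bounds to bounds on $\sup_{x\in I}|\cdot|$ and on the $\beta$-Hölder seminorm over a single unit interval $I$. From a bound $\bigl(\E|g_n(x)-g_n(x')|^q\bigr)^{1/q}\lesssim c_n\min(|x-x'|,2^{-n})$ the Kolmogorov / Garsia--Rodemich--Rumsey criterion produces, for every $q$, a bound on $\E\sup_{x\in I}|g_n(x)|^q$ and on the $\gamma$-Hölder seminorm of $g_n$ over $I$ for every $\gamma$ below the increment exponent minus $1/q$; the point is to interpolate the increment bound against the pointwise bound before invoking the criterion, so as to keep the exponents of $2^{-n}$ summable. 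Moreover, since the $n$-th block lives at spatial scale $2^{-n}$, a unit interval carries of the order of $2^n$ essentially decorrelated copies of it, so that passing from a pointwise $L^q$-bound to a supremum (or a Hölder seminorm) over $I$ costs only a factor $2^{\nu n}$ for arbitrarily small $\nu>0$; this is the standard fact that a stationary random field with finite moments of all orders grows more slowly than any power, and here all moments of $V$ are finite. Summing over $n$ and treating separately the three ranges of $n$ according to whether $2^{-n}$ lies below $\eps$, between $\eps$ and $\eps^{\alpha/2}$, or above $\eps^{\alpha/2}$, one obtains, for every $q$ and uniformly over unit intervals $I$, the bounds $\bigl(\E\sup_{x\in I}|Y^\eps(x,t)|^q\bigr)^{1/q}\lesssim\eps^{\alpha/4}$, $\bigl(\E\sup_{x\in I}|\partial_x Y^\eps(x,t)|^q\bigr)^{1/q}\lesssim1$, and an $L^q(\Omega)$-bound on the $\beta$-Hölder seminorm of $\partial_x Y^\eps(\cdot,t)$ over $I$ of the form $\eps^{-c'\beta}$ with $c'$ depending on $\alpha$; in particular the latter is dominated by $\eps^{-\kappa}$ once $\beta$ is taken small enough relative to $\kappa$, which is all that is needed in the sequel.

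Finally I would globalise: cover $\R$ by the unit intervals $I_m=[m,m+1]$, $m\in\Z$, on which $p_\kappa\asymp w_m\eqdef1+|m|^\kappa$, and note that by stationarity of $V$ the $L^q(\Omega)$-norm of each of the local quantities bounded above over $I_m$ is independent of $m$. Since for nonnegative $a_m$ and $q>p$ one has $\sup_m a_m^p\le(\sum_m a_m^q)^{p/q}$, Jensen's inequality gives
\[
\E\Bigl(\sup_m w_m^{-1}X_m\Bigr)^p\le\Bigl(\sum_m w_m^{-q}\Bigr)^{p/q}\,\sup_m\bigl(\E X_m^q\bigr)^{p/q}\;,
\]
and $\sum_m w_m^{-q}<\infty$ as soon as $\kappa q>1$. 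Choosing $q>\max(p,1/\kappa)$ and inserting the local bounds (using $\eps\le1$ to pass from $\eps^{\alpha/4}$ to the stated $\eps^{\frac{\alpha}{4}(1-\kappa)}$, and from $1$ to $\eps^{-\kappa}$) yields the first two weighted estimates; the $\beta$-Hölder estimate is obtained the same way, the weight $p_\kappa$ occurring only in the denominator of the seminorm and being again comparable to $w_m$ on $I_m$.

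The main obstacle is the bookkeeping in the second paragraph. One must simultaneously keep track of the normalising power of $2^{-n}$ attached to each dyadic block, the extra $2^n$ produced by each spatial differentiation, the exponent lost to the interpolation that feeds Kolmogorov's criterion, the factor $2^{\beta n}$ coming from the Hölder seminorm, and the factor $2^{\nu n}$ coming from the maximum over the $\sim2^n$ near-independent copies; and then one has to check, in each of the three scaling regimes, that the resulting series $\sum_n 2^{-cn}\,\bigl(\eps^{-1/2-\alpha/4}\wedge 2^{n/2}\eps^{-\alpha/4}\wedge 2^{3n/2}\eps^{\alpha/4}\bigr)$ both converges and produces exactly the advertised power of $\eps$. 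Everything else is routine.
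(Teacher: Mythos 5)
Your proposal is correct in outline, but it takes a genuinely different route from the paper. The paper never re-opens the dyadic decomposition at this stage: for even $k$ it writes $|\partial_xY^\eps(t,y)-\partial_xY^\eps(t,x)|^k=k\int_x^y(\partial_xY^\eps(t,z)-\partial_xY^\eps(t,x))^{k-1}\partial^2_xY^\eps(t,z)\,dz$, raises this to the power $q=p/k$ and uses H\"older/Cauchy--Schwarz together with stationarity in $z$ and the ready-made moment bounds \eref{e:momentsYderiv}, \eref{e:momentsYderiv2} to get $\E|\partial_xY^\eps(t,y)-\partial_xY^\eps(t,x)|^{p}\lesssim|y-x|^q\eps^{-q}$; Kolmogorov's lemma then yields a stationary sequence $\xi_n$ dominating the sups (and the $\beta$-seminorms, $\beta<1/k$) over unit intervals with $(\E\xi_n^p)^{1/p}\lesssim\eps^{-1/k}$, and the weight $p_\kappa$ absorbs the sum over intervals exactly as in your last paragraph. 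This identity is what converts the bad $\eps^{-1}$ bound on $\partial^2_xY^\eps$ into a loss of only $\eps^{-1/k}$, and it keeps the proof short because the dyadic work is already packaged in Lemma~\ref{le:momentsY}. Your alternative---re-running the blocks through Lemma~\ref{lem:moment_int_c} (legitimate: its proof only uses $|\varphi|$, and the paper itself applies it to the signed kernel $\tilde\phi=\partial_x\phi$), doing Kolmogorov block by block, and globalising by the Jensen argument---works, and for the two sup-norm bounds is even slightly sharper ($\eps^{\alpha/4-\nu}$, resp.\ $\eps^{-\nu}$, for every $\nu>0$). Two caveats: the localisation over a unit interval is simply a union bound over $\sim 2^n$ points plus chaining below scale $2^{-n}$ (no decorrelation is needed or available) and costs a genuine factor $2^{n/q}$, so your local bounds are $O(\eps^{\alpha/4-c/q})$ and $O(\eps^{-c/q})$ rather than $O(\eps^{\alpha/4})$ and $O(1)$---harmless here; and your H\"older estimate comes out as $\eps^{-c\beta-\nu}$ with $c\sim\alpha/2$, which gives the stated $\eps^{-\kappa}$ only when $\kappa$ is not too small relative to $\beta$. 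The latter looseness is shared by the paper, whose own proof yields $\eps^{-1/k}$ and is used with $\kappa=1/k>\beta$ (cf.\ Corollary~\ref{cor:minus}); in that regime your bound suffices as well, since $\alpha\beta/2<\beta<1/k$.
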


\begin{proof} We establish the estimates of the norms of $\partial_xY^\eps(t)$ only. The norm of
$Y^\eps(t)$ is estimated similarly.
Let  $q>1$ and $p=qk$. For any $x<y$, we have the identity
\begin{equ}
|\partial_xY^\eps(t,y)-\partial_xY^\eps(t,x)|^k=k
\int_x^y(\partial_xY^\eps(t,z)-\partial_xY^\eps(t,x))^{k-1}\partial^2_xY^\eps(t,z)dz\;.
\end{equ}
Raising this to the power $q$ and taking expectations, we obtain
\begin{equs}
\E(|&\partial_xY^\eps(t,y)-\partial_xY^\eps(t,x)|^{p})\le k^q
\left|\int_x^y(\partial_xY^\eps(t,z)-\partial_xY^\eps(t,x))^{k-1}\partial^2_xY^\eps(t,z)dz\right|^q\\
&\lesssim (y-x)^{q-1}\int_x^y\E\left(\left| (\partial_xY^\eps(t,z)-\partial_xY^\eps(t,x))^{k-1}\partial^2_xY^\eps(t,z)\right|^q\right)dz\\
&\lesssim (y-x)^q\sqrt{\E\bigl(\left|\partial_xY^\eps(t,x)\right|^{2q(k-1)}\bigr)
\E\left(\left|\partial^2_xY^\eps(t,x)\right|^{2q}\right)}
\lesssim (y-x)^q \eps^{-q}, \label{e:boundDiffY}
\end{equs}
where we have used the stationarity (in $z$) of the processes
$\partial_xY^\eps(t,z)$
and $\partial^2_xY^\eps(t,z)$, as well as the estimates \eqref{e:momentsYderiv} and
\eqref{e:momentsYderiv2}  from Lemma
\ref{le:momentsY}.

As a consequence of Kolmogorov's Lemma, there exists a stationary sequence of
positive random variables $\{\xi_n\}_{n \in \Z}$ such that for every $n \in \Z$, the bound
\begin{equ}
\sup_{x \in [n,n+1]} |\d_x Y^\eps(t,x)| \le \xi_n\;,
\end{equ}
holds almost surely,
and such that $\bigl(\E \xi_n^p\bigr)^{1/p} \lesssim \eps^{-1/k}$
for every $p \ge 1$.
The bound on $\|\d_x Y^\eps(t)\|_{0,p_\kappa}$ then follows at once.

The bound on $\|\d_x Y^\eps(t)\|_{\beta,p_\kappa}$ follows in virtually the
same way, using the fact that \eref{e:boundDiffY} also yields the bound
\begin{equ}
\sup_{x,y \in [n-1,n+1]} {|\d_x Y^\eps(t,x) - \d_x Y^\eps(t,y)| \over |x-y|^\beta} \le \tilde \xi_n\;,
\end{equ}
for some stationary sequence of random variables $\tilde \xi_n$ which has
all of its moments bounded in the same way as the sequence $\{\xi_n\}$.
\end{proof}



We further obtain the following bound on the ``negative H\"older norm'' of
$\d_x Y^\eps$:
\begin{corollary}\label{cor:minus}
For any $T>0$, $k$ being any even integer, $p>k$ and $\kappa=1/k$, there exists a constant $C_{T,p,\kappa}$ such that
\begin{equ}
\left(\E\|\d_xY^\eps(t)\|_{-{1\over 4},p_\kappa}^p\right)^{1/p}\le C_{T,p,\kappa}\
\eps^{\alpha/16 -\kappa}\;,
\end{equ}
for all
$0\le t\le T$, $\eps>0$. \qed
\end{corollary}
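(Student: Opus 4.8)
The plan is to deduce the bound by interpolating, at the level of the pathwise weighted sup-norms, between the estimate on $Y^\eps(t)$ itself and the one on $\d_x Y^\eps(t)$, both of which are already provided by Lemma~\ref{le:holder_estimY}. The first step is purely a matter of unfolding definitions: since $Y^\eps(t,\cdot)$ is an antiderivative of $\d_x Y^\eps(t,\cdot)$, the definition of the negative weighted H\"older norm gives $\|\d_x Y^\eps(t)\|_{-1/4,p_\kappa} = \sup_{|x-y|\le 1}|Y^\eps(t,y)-Y^\eps(t,x)|/(p_\kappa(x)|x-y|^{3/4})$, so that what we must control is really a H\"older-$3/4$ type modulus of $Y^\eps(t)$ measured against $p_\kappa$.

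The second step is the deterministic interpolation. For fixed $\omega$ and fixed $x,y$ with $|x-y|\le 1$ I would write down two competing bounds on the increment: on the one hand $|Y^\eps(t,y)-Y^\eps(t,x)| \lesssim p_\kappa(x)\,\|Y^\eps(t)\|_{0,p_\kappa}$, using the triangle inequality and the admissibility \eref{e:admissible} of $p_\kappa$ to replace $p_\kappa(y)$ by $p_\kappa(x)$; on the other hand $|Y^\eps(t,y)-Y^\eps(t,x)| = \bigl|\int_x^y \d_x Y^\eps(t,z)\,dz\bigr| \lesssim |x-y|\,p_\kappa(x)\,\|\d_x Y^\eps(t)\|_{0,p_\kappa}$, again using admissibility on the short interval between $x$ and $y$. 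Plugging these into the elementary inequality $\min(a,b)\le a^{1/4}b^{3/4}$ and dividing by $p_\kappa(x)|x-y|^{3/4}$ makes both the weight and the factor $|x-y|^{3/4}$ cancel exactly, leaving, after taking the supremum, the pathwise bound $\|\d_x Y^\eps(t)\|_{-1/4,p_\kappa} \lesssim \|Y^\eps(t)\|_{0,p_\kappa}^{1/4}\,\|\d_x Y^\eps(t)\|_{0,p_\kappa}^{3/4}$.

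The third step is probabilistic bookkeeping. Raising the last bound to the power $p$, taking expectations and applying H\"older's inequality on $\Omega$ with exponents $4$ and $4/3$ gives $(\E\|\d_x Y^\eps(t)\|_{-1/4,p_\kappa}^p)^{1/p} \lesssim (\E\|Y^\eps(t)\|_{0,p_\kappa}^p)^{1/(4p)}(\E\|\d_x Y^\eps(t)\|_{0,p_\kappa}^p)^{3/(4p)}$. Since $k$ is an even integer, $p>k$ and $\kappa=1/k>0$, the hypotheses of Lemma~\ref{le:holder_estimY} are met, and its two estimates $(\E\|Y^\eps(t)\|_{0,p_\kappa}^p)^{1/p}\le C\eps^{\frac\alpha4(1-\kappa)}$ and $(\E\|\d_x Y^\eps(t)\|_{0,p_\kappa}^p)^{1/p}\le C\eps^{-\kappa}$ (uniform over $t\in[0,T]$) then yield the exponent $\frac\alpha{16}(1-\kappa)-\frac{3\kappa}4$. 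Finally one notes that $\frac\alpha{16}(1-\kappa)-\frac{3\kappa}4-\bigl(\frac\alpha{16}-\kappa\bigr)=\frac\kappa{16}(4-\alpha)$, which is positive since $0<\alpha<2$, so for $\eps\le 1$ the exponent may be relaxed to $\frac\alpha{16}-\kappa$ (the regime $\eps\ge 1$ being harmless), which is the claim.

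I do not anticipate a real obstacle. The two points requiring care are: placing the weight correctly in the two one-sided bounds so that the cancellation in the interpolation is exact; and checking that the parameter choice $\kappa=1/k$, $p>k$ is simultaneously what makes the weighted norms of Lemma~\ref{le:holder_estimY} finite (which needs $\kappa p>1$) and what allows the loss $\frac\kappa{16}(4-\alpha)$ in the final exponent to be absorbed. Everything else is a one-line computation.
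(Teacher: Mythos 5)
Your argument is correct and is essentially the paper's own proof: the paper likewise writes $\|\d_x Y^\eps(t)\|_{-1/4,p_\kappa}$ as the supremum of the increment ratio, splits it into a product of factors with powers $1/4$ and $3/4$ bounded respectively by $\|Y^\eps(t)\|_{0,p_\kappa}$ and $\|\d_x Y^\eps(t)\|_{0,p_\kappa}$, and then applies H\"older's inequality on $\Omega$ together with Lemma~\ref{le:holder_estimY}. Your extra check that the resulting exponent $\frac{\alpha}{16}(1-\kappa)-\frac{3\kappa}{4}$ dominates $\frac{\alpha}{16}-\kappa$ for $\alpha<4$ and $\eps\le 1$ is exactly the (tacit) bookkeeping behind the stated bound.
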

\begin{proof}
We note that
\begin{equs}
\|\d_xY^\eps(t)\|_{-{1\over 4},p_\kappa}&=\sup_{|x-y|\le1}\frac{|Y^\eps(t,x)-Y^\eps(t,y)|}{p_\kappa(x)|x-y|^{3/4}},
\\
\|Y^\eps(t)\|_{0,p_\kappa}&=\sup_x\frac{|Y^\eps(t,x)|}{p_\kappa(x)}\;,\qquad
\|\d_xY^\eps(t)\|_{0,p_\kappa}=\sup_x\frac{|\d_xY^\eps(t,x)|}{p_\kappa(x)}.
\end{equs}
We have, for $|x-y|\le1$,
\begin{align*}
\frac{|Y^\eps(t,x)-Y^\eps(t,y)|}{p_\kappa(x)|x-y|^{3/4}}&=
\left(\frac{|Y^\eps(t,x)-Y^\eps(t,y)|}{p_\kappa(x)}\right)^{1/4}
\left(\frac{|Y^\eps(t,x)-Y^\eps(t,y)|}{p_\kappa(x)|x-y|}\right)^{3/4}\\
&\le \left(\frac{|Y^\eps(t,x)|}{p_\kappa(x)}+C_\kappa\frac{|Y^\eps(t,y)|}{p_\kappa(y)}\right)^{1/4}
\left(C_\kappa\sup_{x\le z\le y}\frac{|\d_xY^\eps(t,z)|}{p_\kappa(z)}\right)^{3/4}.
\end{align*}
It remains to take supremums and apply H\"older's inequality.
\end{proof}

We have similar results for $Z^\eps(x,t)$.
\begin{lemma}\label{le:momentsZ}
For each $p\ge1$, there exists a constant $C$ such that for all $\eps>0$, $t\ge0$, $x\in\R$,
$$
\left[\E\left(\left| Z^\eps(x,t)\right|^2\right)\right]^{1/2}\le C \bigl(1+t^{2}\bigr)\eps^{\gamma\alpha/2}.
$$
\end{lemma}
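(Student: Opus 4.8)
The plan is to exploit the mild formulation of the equation for $Z^\eps$ together with the fact that its source term is \emph{centred}, so that the smallness of $Z^\eps$ comes entirely from decay of correlations, which I would quantify using the four-point bound of Lemma~\ref{ass:cor4}. I would write $G^\eps(y,s) := |\d_x Y^\eps(y,s)|^2 - \bar V_\eps(s)$, a field which is centred and stationary in the space variable. Since $Z^\eps$ solves the second equation in \eref{e:defCorrectors} with zero initial datum, Duhamel's formula gives $Z^\eps(x,t) = \int_0^t\int_\R p_{t-s}(x-y)\,G^\eps(y,s)\,dy\,ds$, whence
\begin{equ}
\E\bigl(Z^\eps(x,t)^2\bigr) = \int_0^t\!\!\int_0^t\!\!\int_\R\!\!\int_\R p_{t-s}(x-y)\,p_{t-\sigma}(x-\bar y)\,\E\bigl(G^\eps(y,s)G^\eps(\bar y,\sigma)\bigr)\,dy\,d\bar y\,ds\,d\sigma\;.
\end{equ}
(Fubini applies here since, by Lemma~\ref{le:momentsY}, $\E(G^\eps(y,s)^2)\lesssim 1$ uniformly.)

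Next I would express the covariance of $G^\eps$ through $\Psi^{(4)}$. Using, as in the proof of Lemma~\ref{le:momentsY}, that $\d_x Y^\eps(y,s) = \int_0^s\int_\R p'_{s-r}(y-z)\,V_\eps(z,r)\,dz\,dr$ (with $p'_\tau = \d_x p_\tau$) and that $\bar V_\eps(s) = \E|\d_x Y^\eps(y,s)|^2$, a direct computation shows that $\E(G^\eps(y,s)G^\eps(\bar y,\sigma))$ equals a fourfold integral, over $r_1,r_2\in[0,s]$, $r_3,r_4\in[0,\sigma]$ and $z_1,\dots,z_4\in\R$, of the product of kernels $p'_{s-r_1}(y-z_1)\,p'_{s-r_2}(y-z_2)\,p'_{\sigma-r_3}(\bar y-z_3)\,p'_{\sigma-r_4}(\bar y-z_4)$ against $\Psi^{(4)}_\eps(\vec z,\vec r) := \eps^{-2(1+\alpha/2)}\Psi^{(4)}(\vec z/\eps,\vec r/\eps^\alpha)$, the rescaled version of the function of Lemma~\ref{ass:cor4}; indeed, the centring by $\bar V_\eps$ is precisely what replaces $\Phi^{(4)}_\eps$ by $\Phi^{(4)}_\eps - \Phi_\eps(z_{12},r_{12})\Phi_\eps(z_{34},r_{34}) = \Psi^{(4)}_\eps$. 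I would then insert the bound of Lemma~\ref{ass:cor4}, $|\Psi^{(4)}_\eps|\le \eps^{-2(1+\alpha/2)}\bigl(\eta_\eps(z_{13},r_{13})\eta_\eps(z_{24},r_{24})+\eta_\eps(z_{14},r_{14})\eta_\eps(z_{23},r_{23})\bigr)$, where $\eta_\eps(a,b):=\eta(|a|/\eps+|b|/\eps^\alpha)$ and $\eta$ decays faster than any polynomial by Assumption~\ref{ass:cor}; by symmetry it suffices to estimate the first term, in which each $\eta_\eps$-factor links one of the two legs attached to $(y,s)$ with one of the two legs attached to $(\bar y,\sigma)$.

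The remaining, and main, difficulty is to carry out this multiple integral. Each factor $\eta_\eps$ is concentrated on $|a|\lesssim\eps$ and $|b|\lesssim\eps^\alpha$, so integrating one spatial and one temporal variable against it costs a microscopic volume $\eps^{1+\alpha}$; the two such factors together yield $\eps^{2(1+\alpha)}$, which more than compensates the prefactor $\eps^{-2(1+\alpha/2)}$ and leaves a net power $\eps^{\alpha}$. After integrating out the $z_i$ against the $\eta_\eps$'s and the $y,\bar y$ against the outer kernels $p_{t-s},p_{t-\sigma}$ — where one can use the semigroup identity $p_{t-s}*p'_{s-r}=p'_{t-r}$ on one leg of each pair — and using $\|p'_\tau\|_{L^1(\R)}\lesssim\tau^{-1/2}$ together with the Gaussian decay of $p'_\tau$ on scale $\sqrt\tau$, the remaining integrations over $r_1,\dots,r_4,s,\sigma\in[0,t]$ converge and contribute a polynomial factor in $t$, bounded by $C(1+t^2)^2$; altogether this gives $\E(Z^\eps(x,t)^2)\le C(1+t^2)^2\eps^{\gamma\alpha}$, which is the claim. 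The delicate point is that a naive choice of the order of integration produces near-diagonal divergences as $r_i\to s$, $r_1\to r_3$, or $s\to\sigma$ — precisely of the kind that forced the $|\log\eps|$ losses in the proof of Lemma~\ref{le:bar(c)} — so one has to use part of the $\eta_\eps$-concentration to regularise these singularities, at the price of a small loss in the exponent, which is why the final exponent appears as $\gamma\alpha/2$ rather than $\alpha/2$.
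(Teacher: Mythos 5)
Your setup coincides with the paper's: Duhamel for $Z^\eps$, expressing $\Cov\bigl(|\d_x Y^\eps(z)|^2,|\d_x Y^\eps(z')|^2\bigr)$ as a fourfold integral of derivative heat kernels against the rescaled $\Psi^{(4)}_\eps$, bounding $\Psi^{(4)}_\eps$ by Lemma~\ref{ass:cor4}, and exploiting the resulting factorisation into two ``pair'' integrals. But the step you dispose of in a few lines — ``carrying out this multiple integral'' — is the actual content of the proof, and your argument for it has a genuine gap. The volume-counting claim that integrating one spatial and one temporal variable against $\eta_\eps$ ``costs $\eps^{1+\alpha}$'' presupposes that the remaining kernel factor is bounded; it is not, since $\sup_x|\d_x p_\tau(x)|\sim\tau^{-1}$ is non-integrable as $\tau\to0$, and $\|\d_x p_\tau\|_{L^1}\sim\tau^{-1/2}$ alone does not produce any positive power of $\eps$. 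You acknowledge these near-diagonal divergences and assert they can be absorbed ``using part of the $\eta_\eps$-concentration, at the price of a small loss in the exponent,'' but that assertion is precisely what has to be proved: in the paper it is Lemma~\ref{lem:boundreps}, whose proof requires a dyadic decomposition of the derivative heat kernel as in \eref{e:defphin}, a separate treatment of the large-scale piece $\d_x p^-_t$ (which is responsible for the additive term $(1+t+t')\eps^{\alpha/2}$ and hence for the polynomial factor $(1+t^2)$ in the statement), and a three-case analysis according to whether $d_p(z,z')\le\eps^{\alpha/2}$, $|x-x'|\ge d_p(z,z')/2$ or $|t-t'|\ge d_p^2(z,z')/2$. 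The point of that lemma is not only the size $\eps^{\alpha/2}$ of the pair integral but its \emph{decay} in the parabolic distance, $1\wedge\eps^{\alpha\gamma/2}d_p^{-\gamma}(z,z')$, which is what makes the subsequent integration against the outer heat kernels yield $\eps^{\alpha\gamma}t^{3/2-\gamma}$ rather than a bound that is merely uniform in $(z,z')$; nothing in your sketch produces this decay.

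A further technical inconsistency: you propose to use the semigroup identity $p_{t-s}*\d_x p_{s-r}=\d_x p_{t-r}$ ``on one leg of each pair,'' but this identity is only available before taking absolute values, whereas the bound of Lemma~\ref{ass:cor4} controls only $|\Psi^{(4)}|$, so all four inner kernels must be replaced by their absolute values (and the two legs attached to $(y,s)$ share the same $y$-integration, so they cannot be convolved with $p_{t-s}$ separately anyway). So while your power counting ($\E|Z^\eps(x,t)|^2\lesssim(1+t^2)^2\eps^{\gamma\alpha}$ with $\gamma$ slightly below the naive value) matches the paper's conclusion, the estimate of the singular integral that justifies it is missing; as written, the proposal reproduces the paper's reduction but not its proof.
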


\begin{proof}
The main ingredient in the proof is a bound on the correlation function of
the right hand side of the equation for $Z^\eps$, which we denote by
\begin{equ}
\Lambda_\eps(z,z') = \Cov \bigl(|\d_x Y^\eps(z)|^2, |\d_x Y^\eps(z')|^2\bigr)\;.
\end{equ}
Inserting the definition of $Y^\eps$, we obtain the identity
\begin{equ}
\Lambda_\eps(z,z') = \int\cdots \int \tilde P(z-z_1)\tilde P(z-z_2)\tilde P(z'-z_3)\tilde P(z'-z_4) \Psi^{(4)}_\eps (z_1,\cdots,z_4)\,dz_1\cdots dz_4\;,
\end{equ}
where
\begin{equ}
\tilde P(z) = \tilde P(x,t) = \d_x p_t(x)\;,
\end{equ}
with $p_t$ the standard heat kernel and
\begin{equ}
\Psi^{(4)}_\eps (z_1,\cdots,z_4) = \eps^{-2-\alpha }\Psi^{(4)} \Bigl({x_1\over\eps},\cdots,{x_4\over\eps},{t_1\over\eps^\alpha},\cdots,{t_4\over\eps^\alpha}\Bigr)\;.
\end{equ}
Here, we used the shorthand notation $z_i = (x_i,t_i)$, and integrals over $z_i$ are
understood to be shorthand for $\int_0^t \int_\R\, dx_i\,dt_i$. We now make use of Lemma~\ref{ass:cor4}, which allows
to factor this integral as
\begin{equ}
|\Lambda_\eps(z,z')| \lesssim \Bigl(\eps^{-1-{\alpha\over 2} }\int \int \tilde P(z-z_1)\tilde P(z'-z_3) \rho_\eps(z_1-z_3)\,dz_1\, dz_3\Bigr)^2
\eqdef \tilde \rho_\eps^2(z,z')\;,
\end{equ}
where we used the shorthand notation
\begin{equ}
\rho_\eps(x,t) = \rho \Bigl({x\over \eps}, {t\over \eps^\alpha}\Bigr)\;.
\end{equ}
We will show below that the following bound holds:

\begin{lemma}\label{lem:boundreps}
\begin{equ}
\tilde \rho_\eps(z,z') \lesssim \Bigl(1 \wedge {\eps^{\alpha\gamma/2} \over d_p^\gamma(z,z')}\Bigr) + (1+t+t')\eps^{\alpha/2} \eqdef \zeta_
\eps(z-z') + (1+t+t')\eps^{\alpha/2}\;,
\end{equ}
where $d_p$ denotes the parabolic distance given by
\begin{equ}
d_p(z,z')^2 = |x-x'|^2 + |t-t'|\;.
\end{equ}
\end{lemma}

Taking this bound for granted, we write as in the proof of Lemma~\ref{le:momentsY} $Z^\eps = Z^\eps_- + \sum_{n > 0} Z^\eps_n$ with
\begin{equ}
Z^\eps_n(z) = 2^{-2n} \int \phi_n(z-z')\, \bigl(|\d_x Y^\eps(z')|^2 - \bar V_\eps(t')\bigr)\,dz'\;,
\end{equ}
and similarly for $Z^\eps_-$. Squaring this expression and inserting the bound from Lemma~\ref{lem:boundreps},
we obtain
\begin{equs}
\E |Z^\eps_n(z)|^2 &\lesssim 2^{-4n} \int\int \phi_n(z-z')\phi_n(z-z'')\, \bigl(\zeta_\eps^2(z'-z'') + (1+t'+t'')^2\eps^\alpha\bigr) \,dz'\,dz''\\
&\lesssim 2^{-n} \int \zeta_\eps^2(z')  \,dz' + 2^{-4n} (1+t)^4\eps^\alpha\;,
\end{equs}
where we made use of the scaling of $\phi_n$ given by \eref{e:defphin}. Performing the corresponding bound
for $Z^\eps_-$, we similarly obtain
\begin{equ}
\E |Z^\eps_-(z)|^2\lesssim t \int \zeta_\eps^2(z') \,dz' + (1+t)^4\eps^\alpha\;.
\end{equ}
The claim now follows from the bounds
\begin{equ}
\int \zeta_\eps^2(z') \,dz'  \le \int_0^t \int_\R {\eps^{\alpha\gamma} \over \bigl(|x|^2 + |s|\bigr)^\gamma} \,dx\,ds \lesssim \eps^{\alpha\gamma} t^{{3\over 2}-\gamma}\;.
\end{equ}
\end{proof}

\begin{proof}[of Lemma~\ref{lem:boundreps}]
Similarly to the proof of Lemma~\ref{le:momentsY}, we write
\begin{equ}
\tilde \rho_\eps(z,z') = \sum_{n_1\ge 0}\sum_{n_2\ge 0}\tilde \rho_\eps^{n_1,n_2}(z,z')\;,
\end{equ}
with
\begin{equ}
\tilde \rho_\eps^{n_1,n_2}(z,z') = \eps^{-1-{\alpha\over 2} }2^{-n_1-n_2}\int \int \tilde \phi_{n_1}(z-z_1)\tilde \phi_{n_2}(z'-z_2) \rho_\eps(z_1-z_2)\,dz_1\, dz_2\;.
\end{equ}
Here, for $n \ge 1$, $\tilde \phi_n$ is defined as in the proof of Lemma~\ref{le:momentsY}, whereas $\tilde \phi_0$ is different
from what it was there and is defined as
\begin{equ}
\tilde \phi_0(x,t) = \d_x p_t^-(x)\;.
\end{equ}
By symmetry, we can restrict ourselves to the case $n_1 \ge n_2$, which we will do in the sequel.
In the case where $n_2 > 0$, the above integral could be restricted to the set of pairs $(z_1, z_2)$, such that
their parabolic distance satisfies
\begin{equ}
d_p(z_1,z_2) \ge \bigl(d_p(z,z') - 2^{2-n_2}\bigr)_+\;,
\end{equ}
where $(\cdots)_+$ denotes the positive part of a number.

Replacing $\tilde \phi_{n_2}$ by its supremum and integrating out $\tilde \phi_{n_1}$ and $\rho_\eps$ yields the
bound
\begin{equ}
\tilde \rho_\eps^{n_1,n_2}(z,z') \lesssim \bigl(1+\delta_{n_2,0}(t+t')\bigr)2^{2n_2-n_1} \eps^{\alpha/2} \int_{A_\eps(n_2)} \rho(z_3)\,dz_3\;,
\end{equ}
where $A_\eps(0) = \R^2$ and
\begin{equ}
A_\eps(n_2) = \bigl\{z_3\,:\, d_p(0,z_3) \ge \eps^{-\alpha/2}\bigl(d_p(z,z') - 2^{2-n_2}\bigr)_+\bigr\}\;,
\end{equ}
for $n_2 > 0$. (Remark that the prefactor $1+t+t'$ is relevant only in the case $n_1=n_2 = 0$.)
It follows from the integrability of $\rho$ that one always has the bound
\begin{equ}[e:goodBound]
\tilde \rho_\eps^{n_1,n_2}(z,z') \lesssim \bigl(1+\delta_{n_2,0}(t+t')\bigr) 2^{2n_2-n_1} \eps^{\alpha/2}\;.
\end{equ}
Moreover, we deduce from Assumption~\ref{ass:cor} that, whenever $n_2 > 0$ and $d(z,z') \ge 2^{3-n_2}$, one has the improved bound: for any $\gamma>0$,
\begin{equ}[e:boundN2]
\tilde \rho_\eps^{n_1,n_2}(z,z') \lesssim 2^{2n_2-n_1} \eps^{\alpha/2} \Bigl(1 \wedge {\eps^{\alpha \gamma/2} \over d_p^\gamma(z,z')}\Bigr) \;.
\end{equ}
The bound \eref{e:goodBound} is sufficient for our needs in the case $n_2 = 0$, so we assume that $n_2 > 0$ from now on.

We now obtain a second bound on $\tilde \rho_\eps^{n_1,n_2}(z,z')$ which will be useful in the regime where $n_2$ is very large. Since the integral of $\tilde \phi_{n_1}$ is bounded independently of $n_1$, we obtain
\begin{equ}[e:generalBound]
\tilde \rho_\eps^{n_1,n_2}(z,z') \lesssim \eps^{-1-{\alpha\over 2} }2^{-n_1-n_2}\sup_{d_p(z_1,z) \le 2^{1-n_1}} \int \tilde \phi_{n_2}(z'-z_2) \rho_\eps(z_1-z_2)\,dz_2\;.
\end{equ}
We now distinguish between three cases, which depend on the size of $z-z'$.

\noindent{\bf Case 1: $d_p(z,z') \le \eps^{\alpha/2}$.} In this case, we proceed as in the proof of Lemma \ref{lem:moment_int_c},
which yields
\begin{equs}
\tilde \rho_\eps^{n_1,n_2}(z,z') &\lesssim \eps^{-1-{\alpha\over 2} }2^{-n_1-n_2}\sup_{z_1} \int \tilde \phi_{n_2}(z_2) \rho_\eps(z_2-z_1)\,dz_2\\
&\lesssim \eps^{-1-{\alpha\over 2} }2^{-n_1-n_2}\sup_{x_1} \int_{\R} \sup_{s} \rho_\eps(x_2-x_1,s) \int_{0}^t \tilde \phi_{n_2}(x_2,t_2) \,dt_2\,dx_2\\
&\lesssim \eps^{-1-{\alpha\over 2} }2^{-n_1}\int_{\R} \sup_{s} \rho_\eps(x_2,s)\,dx_2\lesssim \eps^{-{\alpha\over 2} }2^{-n_1}\;.
\label{e:simpleBound}
\end{equs}
\noindent{\bf Case 2: $|x-x'| \ge d_p(z,z')/2 \ge \eps^{\alpha/2}/2$.}
Note that in \eref{e:generalBound}, the argument of $\rho_\eps$ can only ever take values with
$|x_1 - x_2| \in B_\eps(n_2)$ where
\begin{equ}
B_\eps(n_2) = \bigl\{\bar x \,:\, |\bar x| \ge \bigl(|x-x'| - 2^{2-n_2}\bigr)\bigr\}\;.
\end{equ}
As a consequence, we obtain the bound
\begin{equ}
\tilde \rho_\eps^{n_1,n_2}(z,z') \lesssim \eps^{-1-{\alpha\over 2} }2^{-n_1-n_2}\sup_{\bar x \in B_\eps(n_2)} \sup_{s \in \R} \rho_\eps(\bar x,s)\;.
\end{equ}
The case of interest to us for this bound will be $2^{6-n_2} \le \eps^{\alpha/2}$, in which case we deduce from
this calculation and Assumption~\ref{ass:cor} that
\begin{equ}
\tilde \rho_\eps^{n_1,n_2}(z,z') \lesssim  \eps^{-1-{\alpha\over 2} }2^{-n_1-n_2}
\Bigl({\eps \over d_p(z,z')}\Bigr)^\gamma\;,
\end{equ}
where $\gamma$ is an arbitrarily large exponent. Choosing $\gamma$ large enough, we conclude that
one also has the bound
\begin{equ}[e:boundN31]
\tilde \rho_\eps^{n_1,n_2}(z,z') \lesssim \eps^{-{\alpha\over 2} }2^{-n_1}\Bigl(1\wedge{\eps^{\alpha/2} \over d_p(z,z')}\Bigr)^\gamma\;,
\end{equ}
which will be sufficient for our needs.

\noindent{\bf Case 3: $|t-t'| \ge d_p^2(z,z')/2 \ge \eps^{\alpha}/2$.} Similarly, we obtain
\begin{equ}
\tilde \rho_\eps^{n_1,n_2}(z,z') \lesssim \eps^{-{\alpha\over 2} }2^{-n_1}\int_{\R} \sup_{s\in B_\eps'(n_2)} \rho_\eps(x_2,s)\,dx_2\;,
\end{equ}
where
\begin{equ}
B_\eps'(n_2) = \bigl\{s \,:\, |s| \ge \eps^{-\alpha} \bigl(|t-t'| - 2^{8-2n_2}\bigr)\bigr\}\;.
\end{equ}
Restricting ourselves again to the case $2^{6-n_2} \le \eps^{\alpha/2}$, this yields as before
\begin{equ}[e:boundN32]
\tilde \rho_\eps^{n_1,n_2}(z,z') \lesssim  \eps^{-{\alpha\over 2} }2^{-n_1} \Bigl(1 \wedge  {\eps^{\alpha/2} \over d_p(z,z')}\Bigr)^\gamma\;.
\end{equ}
It now remains to sum over all values $n_1 \ge n_2\ge 0$.

For $n_2 = 0$, we sum the bound \eref{e:goodBound}, which yields
\begin{equ}
\sum_{n_1 \ge 0} \tilde \rho_\eps^{n_1,0}(z,z') \le (1+ t+t') \eps^{\alpha/2}\;.
\end{equ}
In order to sum the remaining terms, we first consider the case $d_p(z,z') < \eps^{\alpha/2}$.
In this case, we use \eref{e:goodBound} and \eref{e:simpleBound} to deduce that
\begin{equ}
\sum_{n_1 \ge n_2} \tilde \rho_\eps^{n_1,n_2}(z,z') \lesssim 2^{n_2} \eps^{\alpha/2}\wedge2^{-n_2} \eps^{-\alpha/2}\;,
\end{equ}
so that in this case $\tilde \rho_\eps(z,z') \lesssim 1+(1+ t+t') \eps^{\alpha/2}$.

It remains to consider the case $d_p(z,z') \ge \eps^{\alpha/2}$. For this, we break the sum over $n_2$ in
three pieces:
\begin{equs}
N_1 &= \{n_2 \ge 1\,:\, 2^{-n_2} \ge d(z,z')/8\}\;,\\
N_2 &= \{n_2 \ge 1\,:\, 2^{-6} \eps^{\alpha/2} \le 2^{-n_2} < d(z,z')/8\}\;,\\
N_3 &= \{n_2 \ge 1\,:\, 2^{-n_2} < 2^{-6} \eps^{\alpha/2}\}\;.
\end{equs}
For $n_2 \in N_1$, we only make use of the bound \eref{e:goodBound}. Summing first over $n_1 \ge n_2$ and then
over $n_2 \in N_1$, we obtain
\begin{equ}
\sum_{n_2 \in N_1}\sum_{n_1 \ge n_2}\tilde \rho_\eps^{n_1,n_2}(z,z') \lesssim {\eps^{\alpha/2} \over d_p(z,z')}\;.
\end{equ}
For $n_2 \in N_2$, we only make use of the bound \eref{e:boundN2}. Summing again first over $n_1 \ge n_2$ and then
over $n_2 \in N_1$, we obtain
\begin{equ}
\sum_{n_2 \in N_2}\sum_{n_1 \ge n_2}\tilde \rho_\eps^{n_1,n_2}(z,z') \lesssim {\eps^{\alpha\gamma /2} \over d_p^\gamma(z,z')}\;.
\end{equ}
In the last case, we similarly use either \eref{e:boundN31} or \eref{e:boundN32}, depending on whether
$|x-x'| \ge d_p(z,z')/2$ or $|t-t'| \ge d_p^2(z,z')/2$, which yields again
\begin{equ}
\sum_{n_2 \in N_3}\sum_{n_1 \ge n_2}\tilde \rho_\eps^{n_1,n_2}(z,z') \lesssim {\eps^{\alpha\gamma /2} \over d_p^\gamma(z,z')}\;.
\end{equ}
Combining the above bounds, the claim follows.
\end{proof}

\begin{lemma}\label{le:holder_estimZ}
For any $T>0$, $p\ge1$, $\kappa>0$, $0<\beta<1$, there exists a  constant $C_{T,p,\kappa,\beta}$ such that
for all $0\le t\le T$, $\eps>0$,
\begin{equ}
\bigl(\E \|\d_x Z^\eps(t)\|_{\beta,p_\kappa}^p\bigr)^{1/p} \le C_{T,p,\kappa,\beta} \eps^{-\kappa}\;.
\end{equ}
\end{lemma}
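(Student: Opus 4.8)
The plan is to mimic the structure of the proof of Lemma~\ref{le:holder_estimY}, using the equation satisfied by $Z^\eps$ together with the regularising property of the heat semigroup (Proposition~\ref{prop:Heat}) and the bounds already obtained for $\d_x Y^\eps$. Recall that $Z^\eps(t) = \int_0^t P_{t-s}\bigl(|\d_x Y^\eps(s)|^2 - \bar V_\eps(s)\bigr)\,ds$, so that, differentiating in $x$ and using the semigroup bound of Proposition~\ref{prop:Heat} between the spaces $\CC^0_{p_\kappa}$ and $\CC^{1+\beta}_{p_\kappa}$,
\begin{equ}
\|\d_x Z^\eps(t)\|_{\beta,p_\kappa} \lesssim \int_0^t (t-s)^{-\frac{1+\beta}{2}} \bigl\| |\d_x Y^\eps(s)|^2 - \bar V_\eps(s)\bigr\|_{0,p_\kappa}\,ds\;.
\end{equ}
Since $\frac{1+\beta}{2} < 1$ for $\beta < 1$, the kernel $(t-s)^{-\frac{1+\beta}{2}}$ is integrable on $[0,t]$, so after raising to the power $p$, taking expectations, and applying Minkowski's integral inequality together with H\"older in the time variable (with weight $(t-s)^{-\frac{1+\beta}{2}}$), it suffices to control $\bigl(\E \bigl\| |\d_x Y^\eps(s)|^2 - \bar V_\eps(s)\bigr\|_{0,p_\kappa}^p\bigr)^{1/p}$ uniformly in $s \in [0,T]$.

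The next step is to bound that quantity by $C\eps^{-\kappa}$, or rather by $C\eps^{-2\kappa'}$ for a new small exponent which we then rename. Using the multiplicative bound \eref{e:boundMult} in the trivial form $\| fg\|_{0,\w_1\w_2} \le \|f\|_{0,\w_1}\|g\|_{0,\w_2}$ with $\w_1 = \w_2 = p_{\kappa/2}$ (noting $p_{\kappa/2}\,p_{\kappa/2} \lesssim p_\kappa$), we get $\||\d_x Y^\eps(s)|^2\|_{0,p_\kappa} \lesssim \|\d_x Y^\eps(s)\|_{0,p_{\kappa/2}}^2$, which by the second estimate of Lemma~\ref{le:holder_estimY} has $L^p(\Omega)$-norm bounded by $C\eps^{-\kappa}$ after we choose the underlying even integer $k$ large enough (so that $\kappa/2 = 1/k$ there, say, and then rename). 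The constant term $\bar V_\eps(s)$ contributes $\bar V_\eps(s)\|1\|_{0,p_\kappa} = \bar V_\eps(s)$, which is bounded uniformly in $s$ and $\eps$ by Lemma~\ref{le:bar(c)}; since $\bar V_\eps(s) = \E|\d_x Y^\eps(x,s)|^2$ this is in fact $\lesssim 1$ by \eref{e:momentsYderiv}. Hence the whole right-hand side is $\lesssim \eps^{-\kappa}$, and collecting the time integration gives the claim.

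The main technical point to watch is the low-regularity end of the semigroup estimate: we need $\d_x Z^\eps \in \CC^\beta_{p_\kappa}$ with $\beta$ close to $1$, which forces the exponent $\frac{1+\beta}{2}$ in the time kernel close to $1$; it stays below $1$ for every $\beta<1$, so the time integral converges, but the constant $C_{T,p,\kappa,\beta}$ blows up as $\beta \uparrow 1$, which is why the statement is restricted to $\beta<1$. A second point is that we only get $\eps^{-\kappa}$ (not a positive power of $\eps$) because the $\CC^0_{p_\kappa}$-norm of $\d_x Y^\eps$ itself is only $O(\eps^{-\kappa})$; there is no smallness here, and none is claimed. Everything else is a routine application of Minkowski's inequality, H\"older's inequality in time, the product bound \eref{e:boundMult}, and Propositions~\ref{prop:Heat} together with the already-established moment bounds of Lemmas~\ref{le:momentsY}, \ref{le:holder_estimY} and \ref{le:bar(c)}.
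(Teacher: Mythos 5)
Your argument is correct and is exactly the paper's (one-line) proof spelled out: the paper proves Lemma~\ref{le:holder_estimZ} as a direct corollary of Lemma~\ref{le:holder_estimY} and Proposition~\ref{prop:Heat}, i.e.\ Duhamel's formula for $Z^\eps$, the semigroup bound from $\CC^0_{p_\kappa}$ to $\CC^{1+\beta}_{p_\kappa}$ with the integrable singularity $(t-s)^{-(1+\beta)/2}$, and the $O(\eps^{-\kappa})$ bound on $\|\d_x Y^\eps(s)\|_{0,p_{\kappa/2}}$ together with boundedness of $\bar V_\eps$. The only cosmetic remark is that Proposition~\ref{prop:Heat} is stated for $t\in(0,1]$, so for $T>1$ one uses the semigroup property to extend it, which only affects the constant $C_{T,p,\kappa,\beta}$.
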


\begin{proof}
This is a corollary of Lemma~\ref{le:holder_estimY} and Proposition~\ref{prop:Heat}.
\end{proof}

As a Corollary we deduce
\begin{corollary}\label{co:Z}
For any $T>0$, $p\ge1$, $\kappa>0$, there exists a constant $C_{T,\kappa}$ such that for all
$0\le t\le T$, $\eps>0$,
\begin{equ}
\bigl|\E \|Z^\eps(t)\|_{0,p_\kappa}^p\bigr|^{1/p} \le C_{T,\kappa} \eps^{\alpha/2-\kappa}\;,\quad
\bigl|\E \|\d_x Z^\eps(t)\|_{0,p_\kappa}^p\bigr|^{1/p} \le C_{T,\kappa} \eps^{\alpha/4-\kappa}\;.
\end{equ}
\end{corollary}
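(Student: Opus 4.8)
The plan is to obtain both estimates by interpolating the pointwise moment bounds of Lemma~\ref{le:momentsZ} against the weighted H\"older bounds of Lemma~\ref{le:holder_estimZ}; the estimate for $\d_x Z^\eps$ will then follow from the one for $Z^\eps$ via a Gagliardo--Nirenberg inequality in the scale of spaces $\CC^\gamma_{p_\kappa}$. Since a bound in $L^p(\Omega)$ for large $p$ implies the corresponding bound for all smaller $p$, I would fix $\kappa>0$ and establish both estimates for $p$ as large as needed.

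For the estimate on $Z^\eps$, write $A_\eps := \sup_x\bigl(\E|Z^\eps(t,x)|^p\bigr)^{1/p}$, which by Lemma~\ref{le:momentsZ} (whose $\eps$--exponent $\gamma\alpha/2$ may be taken strictly above $\alpha/2$) satisfies $A_\eps \lesssim \eps^{a}$ for some $a>\alpha/2$, uniformly in $t\in[0,T]$; and recall from Lemma~\ref{le:holder_estimZ} that $\bigl(\E\|\d_x Z^\eps(t)\|_{0,p_\kappa}^p\bigr)^{1/p}\lesssim\eps^{-\kappa_0}$ for any $\kappa_0>0$, so that by the mean value theorem and admissibility of $p_\kappa$ one has, for $|x-y|\le 1$, both $|Z^\eps(t,x)-Z^\eps(t,y)| \le |Z^\eps(t,x)|+|Z^\eps(t,y)|$ and $|Z^\eps(t,x)-Z^\eps(t,y)| \le C\,p_\kappa(x)\,|x-y|\,\|\d_x Z^\eps(t)\|_{0,p_\kappa}$. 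Interpolating these with a parameter $\mu\in(0,1)$, raising to the power $p$, and taking expectations with H\"older's inequality (exponents $(1-\mu)^{-1}$ and $\mu^{-1}$) gives, on a unit interval $I_n=[n,n+1]$ on which $p_\kappa\asymp p_\kappa(n)$,
\begin{equ}
\E|Z^\eps(t,x)-Z^\eps(t,y)|^p \;\lesssim\; A_\eps^{p(1-\mu)}\,\bigl(p_\kappa(n)\,\eps^{-\kappa_0}\bigr)^{p\mu}\,|x-y|^{p\mu}\;,\qquad x,y\in I_n\;.
\end{equ}
Provided $p\mu>1$ (i.e.\ $\mu>1/p$, harmless for $p$ large), Kolmogorov's continuity lemma --- used exactly as for $Y^\eps$ in Lemma~\ref{le:holder_estimY} --- upgrades this to a bound on $\E[\sup_{x\in I_n}|Z^\eps(t,x)|^p]$; summing over $n\in\Z$ against the weight $p_\kappa$, the resulting series $\sum_n p_\kappa(n)^{-(1-\mu)p}$ converging for $p$ large, yields
\begin{equ}
\bigl(\E\|Z^\eps(t)\|_{0,p_\kappa}^p\bigr)^{1/p} \;\lesssim\; \eps^{\,a(1-\mu)-\kappa_0\mu}\;.
\end{equ}
Since $a>\alpha/2$, choosing $p$ large and $\mu,\kappa_0$ correspondingly small makes this exponent at least $\alpha/2-\kappa$, which is the first claim.

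For the estimate on $\d_x Z^\eps$, I would invoke the Gagliardo--Nirenberg interpolation inequality $\|\d_x f\|_{0,p_\kappa}\lesssim\|f\|_{0,p_{\kappa_1}}^{\theta}\,\|f\|_{1+\beta,p_{\kappa_2}}^{1-\theta}$, valid for $\beta\in(0,1)$ with $\theta=\beta/(1+\beta)$ and $\kappa=\theta\kappa_1+(1-\theta)\kappa_2$ --- the same type of interpolation already used in the proof of Proposition~\ref{prop:Heat}. Applying it pointwise in $\omega$ to $f=Z^\eps(t)$, taking $L^p(\Omega)$--norms and using H\"older's inequality, the first factor is controlled by the bound just proved ($\lesssim\eps^{\alpha/2-\kappa_1}$) and the second by $\|Z^\eps(t)\|_{1+\beta,p_{\kappa_2}}=\|Z^\eps(t)\|_{0,p_{\kappa_2}}+\|\d_x Z^\eps(t)\|_{\beta,p_{\kappa_2}}\lesssim\eps^{-\kappa_2}$ (first part together with Lemma~\ref{le:holder_estimZ}). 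This gives $\bigl(\E\|\d_x Z^\eps(t)\|_{0,p_\kappa}^p\bigr)^{1/p}\lesssim\eps^{\theta\alpha/2-\kappa}$, and letting $\beta\uparrow1$ sends $\theta\to1/2$, so that $\theta\alpha/2$ exceeds $\alpha/4-\kappa$ after an arbitrarily small adjustment of the free parameters, giving the second claim.

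I expect the crux to be the first step: showing that the \emph{non-decaying} H\"older modulus $\eps^{-\kappa_0}$ is sufficiently damped by the small power $\mu$ for the rate $\eps^{\alpha/2-\kappa}$ to survive. This works only because the pointwise moments of Lemma~\ref{le:momentsZ} are available for every $p\ge1$ with an $\eps$--rate that can be chosen above $\alpha/2$ (leaving room for the loss), and because one is free to let $p\to\infty$, hence $\mu\to0$, while keeping $\kappa$ fixed. The interpolation inequalities, the appeal to Kolmogorov's lemma, and the lattice summation against $p_\kappa$ are otherwise routine.
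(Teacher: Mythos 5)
The paper states this corollary with no written proof (``As a Corollary we deduce''), so there is no official argument to compare against line by line; your reconstruction is sound and is entirely in the spirit of how the authors handle $Y^\eps$ (pointwise moments upgraded to weighted sup-norms via Kolmogorov and lattice summation in Lemma~\ref{le:holder_estimY}, interpolation of norms as in Corollary~\ref{cor:minus}). The exponent bookkeeping checks out: in Lemma~\ref{lem:boundreps} the exponent $\gamma$ can be taken anywhere below $3/2$ (the integral $\int\zeta_\eps^2$ in the proof of Lemma~\ref{le:momentsZ} converges precisely for such $\gamma$), so the pointwise rate $\eps^{\gamma\alpha/2}$ can indeed be fixed strictly above $\eps^{\alpha/2}$; your interpolation-plus-Kolmogorov step then gives $\eps^{a(1-\mu)-\kappa_0\mu}\le\eps^{\alpha/2-\kappa}$ for $\mu,\kappa_0$ small and $p$ large, with smaller $p$ recovered by Jensen; and the Landau--Kolmogorov inequality $\|\d_x f\|_{0,p_\kappa}\lesssim\|f\|_{0,p_{\kappa_1}}^{\theta}\|f\|_{1+\beta,p_{\kappa_2}}^{1-\theta}$ with $\theta=\beta/(1+\beta)\uparrow 1/2$, localized to unit intervals where the weights are comparable, correctly converts the first bound plus Lemma~\ref{le:holder_estimZ} (which is exactly why its restriction $\beta<1$ suffices) into the rate $\eps^{\alpha/4-\kappa}$ for $\d_x Z^\eps$.

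The one point you should state explicitly is your reliance on pointwise moments of $Z^\eps$ of \emph{arbitrarily high order} with a rate $\eps^a$, $a>\alpha/2$. Lemma~\ref{le:momentsZ} is prefaced by ``for each $p\ge1$'', but the displayed estimate --- and its proof, which goes through the four-point function $\Psi^{(4)}$ of Lemma~\ref{ass:cor4} --- controls only the second moment. Your argument genuinely needs the higher moments: with only $\E|Z^\eps(x,t)|^2$ available, the H\"older/Kolmogorov step and the summation $\sum_n p_\kappa(n)^{-(1-\mu)p}$ force $p\mu>1$ and $(1-\mu)p\kappa>1$ at $p=2$, which both ruins the rate and excludes small $\kappa$; and interpolating in $\Omega$ against the non-decaying all-moment bounds on $Z^\eps$ cannot restore it. So either one reads Lemma~\ref{le:momentsZ} as holding for every $p$ (which is what the corollary, claimed for all $p\ge1$ and all $\kappa>0$, implicitly requires anyway), or one must extend its proof to higher moments via higher-order analogues of Lemma~\ref{ass:cor4}. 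With that caveat made explicit, your proof is correct.
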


We will need moreover
\begin{corollary}\label{co:Zto0}
As $\eps\to0$, $Z^\eps(x,t)\to0$ in probability, locally uniformly in $(x,t)$.
\end{corollary}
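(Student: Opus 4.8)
The plan is to follow the proof of Corollary~\ref{co:Yto0} almost verbatim; the only genuinely new ingredient needed is a bound on the temporal increments of $Z^\eps$ to complement the spatial bounds that are already available.

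Fix $T>0$ and a bounded set $D\subset[0,T]\times\R$. From Corollary~\ref{co:Z} we already have, for every $p\ge1$ and every $\kappa>0$, the pointwise bounds $\sup_{(x,t)\in D}\E\bigl(|Z^\eps(x,t)|^p\bigr)\lesssim\eps^{p(\alpha/2-\kappa)}$ and $\sup_{(x,t)\in D}\E\bigl(|\d_x Z^\eps(x,t)|^p\bigr)\lesssim\eps^{p(\alpha/4-\kappa)}$, so that, taking $\kappa$ small, the spatial increments of $Z^\eps$ carry a strictly positive power of $\eps$. To control the temporal increments I would use the mild formulation $Z^\eps(t)=\int_0^t P_{t-s}g^\eps(s)\,ds$ with $g^\eps(s)=|\d_x Y^\eps(s)|^2-\bar V_\eps(s)$, and write, for $t'<t$,
\begin{equ}
Z^\eps(t)-Z^\eps(t')=(P_{t-t'}-\mathrm{Id})Z^\eps(t')+\int_{t'}^t P_{t-s}g^\eps(s)\,ds\;.
\end{equ}
The second term has $\CC_{p_{2\kappa}}$-norm bounded by $|t-t'|\sup_s\|g^\eps(s)\|_{0,p_{2\kappa}}$, whose $L^p(\Omega)$-norm is $\lesssim\eps^{-\kappa}$ by Lemma~\ref{le:holder_estimY}, the multiplicativity bound \eref{e:boundMult}, and the boundedness of $\bar V_\eps$ (Lemma~\ref{le:bar(c)}). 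For the first term, combining Corollary~\ref{co:Z} with Lemma~\ref{le:holder_estimZ} yields $\bigl(\E\|Z^\eps(t')\|_{\beta,p_{2\kappa}}^p\bigr)^{1/p}\lesssim\eps^{-\kappa}$ for any $\beta\in(0,1)$, and the standard smoothing estimate $\|(P_s-\mathrm{Id})f\|_{0,p_{2\kappa}}\lesssim s^{\beta/2}\|f\|_{\beta,p_{2\kappa}}$, which is a consequence of Proposition~\ref{prop:Heat}, then gives $\bigl(\E\|Z^\eps(t)-Z^\eps(t')\|_{0,p_{2\kappa}}^p\bigr)^{1/p}\lesssim|t-t'|^{\beta/2}\eps^{-\kappa}$.

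Combining this temporal estimate with the spatial bound on $\d_x Z^\eps$ exactly as in Corollary~\ref{co:Yto0}, one obtains, for all $(x,t),(y,s)\in D$ and all $p\ge1$, a bound of the form $\E\bigl(|Z^\eps(x,t)-Z^\eps(y,s)|^p\bigr)\lesssim(|x-y|+|t-s|)^{p\beta/2}\eps^{-p\kappa}$. Interpolating this against the pointwise bound $\E\bigl(|Z^\eps(x,t)|^p\bigr)\lesssim\eps^{p(\alpha/2-\kappa)}$ via H\"older's inequality, and choosing the interpolation weight and the exponent $p$ so that the resulting exponent of $(|x-y|+|t-s|)$ exceeds $2$ while the power of $\eps$ stays strictly positive — possible since $\alpha/2-\kappa>0$ and $\kappa$ can be taken arbitrarily small — Kolmogorov's continuity criterion together with the pointwise decay yields that $Z^\eps\to0$ in probability, locally uniformly in $(x,t)$. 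The main obstacle is precisely this temporal-increment bound: for $Y^\eps$ the time derivative was read off directly from the equation using the bound on $\d_x^2 Y^\eps$, whereas for $Z^\eps$ one must pass through the Duhamel representation and exploit the parabolic smoothing of $P_t$ acting on the merely H\"older-regular forcing $g^\eps$; once this is in hand, the remainder is a routine repetition of the argument for Corollary~\ref{co:Yto0}.
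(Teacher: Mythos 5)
Your argument is correct, but it takes a genuinely different route from the paper. The paper does not mimic Corollary~\ref{co:Yto0} at all: it observes that \eqref{e:momentsYderiv} gives $\eps$-uniform $L^p(K)$ bounds on the forcing $|\d_x Y^\eps|^2-\bar V_\eps$, invokes the Nash (De Giorgi--Nash--Moser) interior estimate for the parabolic equation satisfied by $Z^\eps$ to get an $\eps$-uniform bound on $\E\|Z^\eps\|_{C^\gamma(K)}$ in a space-time H\"older norm, and then combines this equicontinuity with the smallness $\E\|Z^\eps\|^p_{L^p(K)}\lesssim\eps^{p(\alpha/2-\kappa)}$ from Corollary~\ref{co:Z} to conclude. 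You instead reproduce the Kolmogorov-criterion scheme of Corollary~\ref{co:Yto0}, supplying the missing temporal-increment bound through the Duhamel representation, the semigroup continuity estimate $\|(P_s-\mathrm{Id})f\|_{0,p_{2\kappa}}\lesssim s^{\beta/2}\|f\|_{\beta,p_{2\kappa}}$, and the bounds of Corollary~\ref{co:Z} and Lemmas~\ref{le:holder_estimY}, \ref{le:holder_estimZ}; the interpolation and Kolmogorov step then goes through exactly as you describe, since the distance exponent can be pushed above $2$ while the pointwise smallness $\eps^{\alpha/2-\kappa}$ retains a positive power of $\eps$. The paper's route is shorter because the Nash estimate packages both space and time regularity at once with no input beyond $L^p$ bounds on the right-hand side; yours is more self-contained within the functional framework already set up (weighted H\"older spaces and Proposition~\ref{prop:Heat}) and avoids importing an external parabolic regularity theorem. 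Two small caveats: the estimate on $(P_s-\mathrm{Id})$ is not literally contained in Proposition~\ref{prop:Heat} (which only states the regularising direction) and needs a short separate verification in the weighted spaces, and the bound on $\int_{t'}^t P_{t-s}g^\eps(s)\,ds$ should be taken in the form $\sup_s\bigl(\E\|g^\eps(s)\|^p_{0,p_{2\kappa}}\bigr)^{1/p}$ via Minkowski's integral inequality rather than $\E\sup_s$, with the harmless exponent $\eps^{-2\kappa}$ in place of $\eps^{-\kappa}$; neither affects the conclusion.
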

\begin{proof}
It follows from estimate \eqref{e:momentsYderiv} that for any $p>1$ and any bounded subset $K\subset \mathbb R\times\mathbb R^+$, there exists a constant $C_{p,K}$
such that
$$
\E\bigg(\int\limits_K \big(|\partial_xY^\eps(x,t)|^2-\overline V^\eps\big)^p dxdt\bigg)\le C_{p,K}.
$$
Then, by the Nash estimate, we obtain
\begin{equation}\label{e:narv1}
\E \|Z^\eps\|_{C^\gamma(K)}\le C_{K},
\end{equation}
where the H\"older exponent $\gamma>0$ and $C_{K}$ do not depend on $\eps$. As a consequence of the first estimate of Corollary \ref{co:Z} we have
\begin{equation}\label{e:narv2}
\E\|Z^\eps\|^p_{L^p(K)}\le C_{p,K}\eps^{p(\alpha/2-\kappa)}.
\end{equation}
Combining \eqref{e:narv1} and \eqref{e:narv2} one can easily derive the required convergence.
\end{proof}

\section{Proof of the main result}
\label{sec:final}




Before concluding with the proof of our main theorem, we prove a result for a parabolic heat equation with coefficients which live in spaces of weighted H\"older continuous functions.

We consider an abstract evolution equation of the type
\begin{equ}[e:abstract]
\d_t u = \d_x^2 u + F\,\d_x u + G\, u\;,
\end{equ}
where $F$ and $G$ are measurable functions of time,
taking values in $\CC^{-\beta}_{p_\kappa}$
for some suitable $\kappa > 0$ and $\beta < {1\over 2}$.
The main result of this section is the following:

\begin{theorem}\label{theo:linearFG}
Let $\beta$ and $\kappa$ be positive numbers such that $\beta + \kappa < {1\over 2}$
and
let $F$ and $G$ be functions in $L^p_\loc(\R_+, \CC^{-\beta}_{p_\kappa})$ for
every $p \ge 1$.

Let furthermore $\ell \in \R$ and $u_0 \in \CC^{3/2}_{e_\ell}$.
Then, there exists a unique global mild solution to \eref{e:abstract}. Furthermore, this solution is continuous with
values in $\CC^{3/2}_{e_m}$ for every $m < \ell$ and the map
$(u_0, F,G) \mapsto u$ is jointly continuous in these topologies.
\end{theorem}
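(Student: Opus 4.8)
The plan is to build the solution as a Duhamel (``Wild'') series. Writing the mild formulation
\[
u(t) = P_t u_0 + \int_0^t P_{t-s}\bigl(F(s)\,\d_x u(s) + G(s)\,u(s)\bigr)\,ds\;,
\]
I would set $u^{(0)}(t) = P_t u_0$ and, recursively, $u^{(n+1)}(t) = \int_0^t P_{t-s}\bigl(F(s)\,\d_x u^{(n)}(s) + G(s)\,u^{(n)}(s)\bigr)\,ds$, and show that $u = \sum_{n\ge 0} u^{(n)}$ converges and is the unique mild solution. The key point is a quantitative bound on each $u^{(n)}$.

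Fix an auxiliary exponent $\epsilon$ with $\beta < \epsilon$. I claim that each $u^{(n)}$ is continuous in time with values in $\CC^{1+\epsilon}_{p_{n\kappa}e_\ell}$: the polynomial part of the weight grows by a factor $p_\kappa$ at each step, but the exponential part $e_\ell$ is untouched because $F$ and $G$ carry only polynomial weights. Indeed, if $u^{(n)}(s)\in\CC^{1+\epsilon}_{p_{n\kappa}e_\ell}$ then $\d_x u^{(n)}(s)\in\CC^{\epsilon}_{p_{n\kappa}e_\ell}$, so Proposition~\ref{prop:multHol} (applicable since $\epsilon > \beta = |{-\beta}|$), together with $p_\kappa\cdot p_{n\kappa}\asymp p_{(n+1)\kappa}$, gives $F(s)\,\d_x u^{(n)}(s),\ G(s)\,u^{(n)}(s)\in\CC^{-\beta}_{p_{(n+1)\kappa}e_\ell}$, with norm bounded by $\bigl(\|F(s)\|_{-\beta,p_\kappa}+\|G(s)\|_{-\beta,p_\kappa}\bigr)\,\|u^{(n)}(s)\|_{1+\epsilon,p_{n\kappa}e_\ell}$. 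The obvious extension of Proposition~\ref{prop:Heat} to product weights then yields $\|P_{t-s}(\,\cdots)\|_{1+\epsilon,p_{(n+1)\kappa}e_\ell}\lesssim (t-s)^{-(1+\epsilon+\beta)/2}\,(\cdots)$. Since $1+\epsilon+\beta<2$ whenever $\epsilon<1-\beta$, this singularity is integrable, and using H\"older in time against the local $L^p$ norms of $F,G$ for $p$ large, an induction on $n$ gives, on any interval $[0,T]$,
\[
\sup_{t\le T}\|u^{(n)}(t)\|_{1+\epsilon,\,p_{n\kappa}e_\ell}\ \le\ \frac{C_T^{\,n}}{\Gamma(n\rho+1)}\,\|u_0\|_{3/2,e_\ell}\;,
\]
where, by the standard iterated fractional-integral (generalised Gronwall) estimate, $\rho$ may be chosen as close as we wish to $\tfrac12-\tfrac{\epsilon+\beta}{2}$ by taking $p$ large, and $C_T$ depends on $T$ and the $L^p$-norms of $F,G$ on $[0,T]$.

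It remains to sum the series after transferring to a purely exponential weight. For $m<\ell$ one has the pointwise bound $p_{n\kappa}(x)\,e_\ell(x)\le c^n(n!)^{\kappa}\,e_m(x)$, since $\max_x|x|^{n\kappa}e^{-(\ell-m)|x|}\asymp (n\kappa/(\ell-m))^{n\kappa}$ which by Stirling is $\le c^n (n!)^\kappa$; hence $\|u^{(n)}(t)\|_{1+\epsilon,e_m}\lesssim (cC_T)^n (n!)^{\kappa}/\Gamma(n\rho+1)$. Choosing $\epsilon\in(\beta,\,1-\beta-2\kappa)$ — a nonempty interval precisely because $\beta+\kappa<\tfrac12$ — one gets $\rho>\kappa$, so the Gamma factor in the denominator dominates and $\sum_n u^{(n)}$ converges absolutely and uniformly on $[0,T]$ with values in $\CC^{1+\epsilon}_{e_m}$; the limit $u$ is thus in $C([0,T],\CC^{1+\epsilon}_{e_m})$ and, by construction, a mild solution. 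Feeding $u$ back into the Duhamel formula and using that $P_t$ maps $\CC^{-\beta}_{e_m}$ into $\CC^{3/2}_{e_m}$ (since $\tfrac32+\beta<2$) while $P_tu_0\in\CC^{3/2}_{e_m}$, one upgrades $u$ to $C([0,T],\CC^{3/2}_{e_m})$; continuity at $t=0$ follows from $P_tu_0\to u_0$ in $\CC^{3/2}_{e_m}$ (valid because $u_0\in\CC^{3/2}_{e_\ell}\subset\CC^{3/2}_{e_m}$ lies in the closure of $\CC_0^\infty$) together with the vanishing of the Duhamel integral in that norm as $t\to0$. Joint continuity of $(u_0,F,G)\mapsto u$ follows since each $u^{(n)}$ is multilinear (degree $1$ in $u_0$, degree $n$ in $(F,G)$) and continuous in the data, with the bounds above uniform over bounded sets of data, so convergence of the series is locally uniform in the data. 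For uniqueness, let $\tilde u\in C([0,T],\CC^{3/2}_{e_m})$ be any mild solution with the same data and put $r_N=\tilde u-\sum_{n=0}^N u^{(n)}$; subtracting the two Duhamel identities gives $r_N=\mathcal L\,r_{N-1}=\mathcal L^{N}(\tilde u-P_\cdot u_0)$, where $\mathcal L h=\int_0^\cdot P_{\cdot-s}\bigl(F\,\d_x h+Gh\bigr)\,ds$, and the same estimate shows $\|r_N(t)\|_{\CC^{3/2}(K)}\to0$ on every bounded $K$, whence $\tilde u=\sum_n u^{(n)}=u$.

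The main obstacle is exactly this weight bookkeeping: each Duhamel step multiplies the admissible weight by $p_\kappa$, so no single weighted H\"older space is invariant under the iteration, and one must trade the factorial gain coming from the integrable heat-kernel singularity $(t-s)^{-(1+\epsilon+\beta)/2}$ against the factorial-type growth $(n!)^\kappa$ of the constants needed to re-absorb $p_{n\kappa}$ into a slightly slower-decaying exponential. Making these balance is what forces the iteration to be run at regularity $\CC^{1+\epsilon}$ with $\epsilon$ close to $\beta$ — the choice $\CC^{3/2}$ would only yield the weaker constraint $\beta+2\kappa<\tfrac12$ — and it is precisely this balancing that consumes the hypothesis $\beta+\kappa<\tfrac12$.
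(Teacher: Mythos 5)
Your Wild-series strategy is genuinely different from the paper's proof (which runs a Banach fixed point in the single space of maps $t\mapsto u_t\in\CC^{3/2}_{e_{\ell-\delta t}}$, with a time-decreasing exponential weight), but as written it has a gap at the step you call ``the obvious extension of Proposition~\ref{prop:Heat} to product weights''. Proposition~\ref{prop:Heat} is stated, deliberately, with constants uniform only over \emph{bounded} ranges of the weight parameters ($|\ell|\le\ell_0$, $|\kappa|\le\kappa_0$), whereas your induction applies the smoothing estimate in $\CC^{\,\cdot}_{p_{n\kappa}e_\ell}$ with exponent $n\kappa\to\infty$ and absorbs all constants into $C_T^{\,n}$. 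That uniformity is false: for $t$ of order one, the operator norm of $P_t$ on $\CC_{p_m e_\ell}$ is at least of order $(cm)^{m/2}$ (test $P_t$ at $x=0$ against a compactly supported function comparable to $p_m$ on $|y|\le 2\sqrt m$, which gives $\int p_t(y)\bigl(1+|y|^m\bigr)\,dy\sim (cmt)^{m/2}$), and any bound $\|P_t g\|_{1+\epsilon,p_m e_\ell}\le K\,t^{-(1+\epsilon+\beta)/2}\|g\|_{-\beta,p_m e_\ell}$ forces $K\gtrsim (cm)^{m/2}$ as well. With $m=n\kappa$ the per-step constant is therefore of size $\exp\bigl(\tfrac{\kappa}{2}n\log n+O(n)\bigr)$, and the product over the $n$ Duhamel iterations is $\exp\bigl(c\,n^2\log n\bigr)$, which overwhelms both the $1/\Gamma(n\rho+1)$ gain from the iterated time integrals and your final $(n!)^{\kappa}$ cost for trading $p_{n\kappa}e_\ell$ against $e_m$. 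Hence the key estimate $\sup_{t\le T}\|u^{(n)}(t)\|_{1+\epsilon,p_{n\kappa}e_\ell}\le C_T^{\,n}\|u_0\|_{3/2,e_\ell}/\Gamma(n\rho+1)$ is not justified, and convergence of the series (and with it uniqueness and continuity in the data, which rely on the same bound) is not established. (The multiplication step has a milder version of the same issue: the constant in Proposition~\ref{prop:multHol} depends on the admissibility constant of $p_{n\kappa}$, which grows like $2^{n\kappa}$ --- but that is only exponential and harmless.)

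The strategy is repairable, and then it does use $\beta+\kappa<\tfrac12$ exactly as you intend: rather than letting polynomial weights accumulate, absorb the new factor $p_\kappa$ at \emph{every} step into a slightly smaller exponential rate via \eref{e:boundpe}, i.e.\ $p_\kappa e_{\ell_j}\le C\theta_{j+1}^{-\kappa}e_{\ell_{j+1}}$ with $\ell_{j+1}=\ell_j-\theta_{j+1}$ and $\theta_j\sim j^{-1-\delta}$ summable. Then every iterate lives in a space with purely exponential weight of bounded parameter, the heat-semigroup constants are uniform, and the accumulated cost $\prod_{j\le n}\theta_j^{-\kappa}\lesssim c^n (n!)^{(1+\delta)\kappa}$ is still beaten by $\Gamma(n\rho+1)$ provided $(1+\delta)\kappa<\rho\approx\tfrac12-\tfrac{\beta+\epsilon}{2}$, which for $\epsilon$ close to $\beta$ and $\delta$ small is precisely $\beta+\kappa<\tfrac12$. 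This per-step absorption is, in essence, the paper's device: there the weight $e_{\ell-\delta t}$ decreases continuously in time, the factor $p_\kappa$ is converted into the integrable singularity $(\delta(t-s))^{-\kappa}$, and the argument closes as a contraction in one fixed space rather than as a summed series.
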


\begin{proof}
We will show a slightly stronger statement, namely that for every $\delta > 0$ sufficiently small,
the mild solution has the property that $u_t \in \CC^{\gamma}_{e_{\ell-\delta t}}$ for
$t \in [0,T]$ for arbitrary values of $T>0$. We fix $T$, $\delta$ and $\ell$ from now on.

We then write
\begin{equ}
\$u\$_{\delta,\ell,T} \eqdef \sup_{t \in [0,T]} \|u_t\|_{{3\over 2},e_{\ell-\delta t}}\;,
\end{equ}
and we denote by $\CB_{\delta,\ell,T}$ the corresponding Banach space.
With this notation at hand, we define a map $\CM_T \colon \CB_{\delta,\ell,T} \to \CB_{\delta,\ell,T}$ by
\begin{equ}
\bigl(\CM_T u\bigr)_t = \int_0^t P_{t-s} \bigl(F_s \,\d_x u_s + G_s\, u_s\bigr)\,ds\;,\qquad t \in [0,T]\;.
\end{equ}
It follows from Proposition~\ref{prop:Heat} that we have the bound
\begin{equ}
\big\|\bigl(\CM_T u\bigr)_t \big\|_{{3\over2},e_{\ell-\delta t}} \le C\int_0^t (t-s)^{-{3 + 2\beta \over 4}}\bigl\|F_s \,\d_x u_s + G_s\, u_s\bigr\|_{-\beta,e_{\ell-\delta t}}\,ds\;.
\end{equ}
Combining Proposition~\ref{prop:multHol} with \eref{e:idexp} and \eref{e:boundpe}, we furthermore obtain the bound
\begin{equs}
\bigl\|F_s \,\d_x u_s\bigr\|_{-\beta,e_{\ell-\delta t}} &\le C \bigl(\delta |t-s|\bigr)^{-\kappa} \|F_s\|_{-\beta,p_\kappa} \bigl\|\d_x u_s\bigr\|_{{1\over2},e_{\ell-\delta s}} \\
& \le C \bigl(\delta |t-s|\bigr)^{-\kappa} \|F_s\|_{-\beta,p_\kappa} \$u\$_{\delta,\ell,T}\;,
\end{equs}
where the proportionality constant $C$ is uniformly bounded for $\delta \in (0,1]$ and bounded $\ell$ and $s$.
A similar bound holds for $G_s u_s$ so that, combining these bounds and using
H\"older's inequality for the integral over $t$, we obtain the existence of
constants $\zeta > 0$ and $p>1$ such that the bound
\begin{equ}
\$\CM_T u\$_{\delta,\ell,T} \le C \delta^{-\kappa} T^{\zeta} \bigl(\|F\|_{L^p(\CC^{-\beta}_{p_\kappa})} + \|G\|_{L^p(\CC^{-\beta}_{p_\kappa})}\bigr) \$u\$_{\delta,\ell,T}\;,
\end{equ}
holds.
Since the norm of this operator is strictly less than $1$ provided that $T$ is small enough, the short-time existence and uniqueness of solutions follow from Banach's fixed point theorem. The existence of solutions up to the final time $T$ follows
by iterating this argument, noting that the interval of short-time existence
restarting from $u(t)$ at time $t$ can be bounded from below by a constant
that is uniform over all $t \in [0,T]$, as a consequence of the linearity of the
equation.

Actually, we obtain the bound
\begin{equ}
\|u_t\|_{{3\over 2},e_{\ell-\delta t}} \lesssim \exp \bigl(C t \bigl(\|F\|_{L^p(\CC^{-\beta}_{p_\kappa})} + \|G\|_{L^p(\CC^{-\beta}_{p_\kappa})}\bigr)^{1/\zeta}\bigr)\|u_0\|_{{3\over2},e_\ell}\;,
\end{equ}
where the constants $C$ and $\zeta$ depend on the choice of $\ell$ and $\delta$.

The solutions are obviously linear in $u_0$ since the equation is linear in $u$.
It remains to show that the solutions also depend continuously on $F$ and $G$.
Let $\bar u$ be the solution to the equation
\begin{equ}[e:abstract2]
\d_t \bar u = \d_x^2 \bar u + \bar F\,\d_x \bar u + \bar G\, \bar u\;,
\end{equ}
and write $\rho =  u - \bar u$. The difference $\rho$ then satisfies the equation
\begin{equ}
\d_t \rho = \d_x^2 \rho + F\,\d_x \rho  + G\, \rho + (F - \bar F)\,\d_x \bar u
+ (G - \bar G)\,\bar u\;,
\end{equ}
with zero initial condition.
Similarly to before, we thus have
\begin{equ}
\rho_t = \bigl(\CM_T \rho\bigr)_t +  \int_0^t P_{t-s} \bigl((F_s - \bar F_s)\,\d_x \bar u_s
+ (G_s - \bar G_s)\,\bar u_s\bigr)\,ds\;.
\end{equ}
It follows from the above bounds that
\begin{equ}
\$\rho\$_{\delta,\ell,T} \lesssim  \$\CM_T \rho\$_{\delta,\ell,T}
+ C \delta^{-\kappa} T^\zeta \bigl(\|F-\bar F\|_{L^p(\CC^{-\beta}_{p_\kappa})} + \|G-\bar G\|_{L^p(\CC^{-\beta}_{p_\kappa})}\bigr) \$\bar u\$_{\delta,\ell,T}\;.
\end{equ}
Over short times, the required continuity statement thus follows at once. Over
fixed times, it follows as before by iterating the argument.
\end{proof}

\begin{remark}
In principle, one could obtain a similar result for less regular initial conditions,
but this does not seem worth the additional effort in this context.
\end{remark}

We now have finally all the ingredients in place to give the proof of our main result.

\begin{proof}[of Theorem~\ref{th:main}]
We apply Theorem~\ref{theo:linearFG} with $\beta = {1\over 4}$ and $\kappa = {1\over 8}$.
Note that the equation \eref{e:veps} for $v^\eps$ is precisely of the form
\eref{e:abstract} with
\begin{equ}
F = 2 \d_x Y^\eps + 2\d_x Z^\eps\;,\qquad G = |\d_x Z^\eps|^2 + 2\,\d_x Z^\eps \d_x Y^\eps\;.
\end{equ}
It follows from Corollaries~\ref{cor:minus} and \ref{co:Z} that, for every $p > 0$ and $\delta > 0$,
one has the bound
\begin{equ}
\Bigl|\E \int_0^T \|F\|_{\beta,p_\kappa}^p\,dt\Bigr|^{1/p} \lesssim \eps^{{\alpha \over 16}-\delta}\;,
\end{equ}
say. Similarly, it follows from Lemma~\ref{le:holder_estimY} and Corollary~\ref{co:Z} that one actually has the bound
\begin{equ}
\Bigl|\E \int_0^T \|G\|_{0,p_\kappa}^p\,dt\Bigr|^{1/p} \lesssim \eps^{{\alpha \over 4}-\delta}\;,
\end{equ}
which is stronger than what we required.
As a consequence of Theorem~\ref{theo:linearFG}, this
shows immediately that $v^\eps \to u$ in probability, locally uniformly both
in space and in time. We conclude by recalling that from Corollary \ref{co:Yto0}  and \ref{co:Zto0},   the
correctors $Y^\eps$ and $Z^\eps$ themselves converge locally uniformly to $0$
in probability.
\end{proof}

\bibliographystyle{./Martin}
\bibliography{./refs}

\end{document}